\newif\ifsubsections
	\definecolor{linkred}{rgb}{0.7,0.2,0.2}
	\definecolor{linkblue}{rgb}{0,0.2,0.6}
	\definecolor{linkred}{rgb}{0.0,0.0,0.0}
	\definecolor{linkblue}{rgb}{0,0.0,0.0}
\newtheorem{tm}[equation]{Theorem}
\newtheorem{rmk}[equation]{Remark}
\newtheorem{cor}[equation]{Corollary}
\newtheorem{ex}[equation]{Example}
\newtheorem{exe}[equation]{Exercise}
\newtheorem{fact}[equation]{Fact}
\newtheorem{??}[equation]{Question}
\newtheorem{defi}[equation]{Definition}
\newcommand{\ben}{\begin{enumerate}}
\newcommand{\een}{\end{enumerate}}
\newcommand{\bit}{\begin{itemize}}
\newcommand{\eit}{\end{itemize}}
\newcommand{\beq}{\begin{equation}}
\newcommand{\eeq}{\end{equation}}
\newcommand{\beqstar}{\begin{equation*}}
\newcommand{\la}{\label}
\newcommand{\n}{\noindent}
\newcommand\ci{\cite}
\newcommand{\im}{ \hbox{\rm Im} }
\newcommand{\ke}{ \hbox{\rm Ker} }
\newcommand{\lorw}{\longrightarrow}
\newcommand\rat{{\Bbb Q}}
\newcommand\comp{{\Bbb C}}
\newcommand\zed{{\Bbb Z}}
\newcommand\pn[1]{{\Bbb P}^{#1}}
\newcommand\rat{\mathbb{Q}}
\newcommand\comp{\mathbb{C}}
\newcommand\zed{\mathbb{Z}}
\newcommand\pn[1]{\mathbb{P}^{#1}}
\newcommand\e{\epsilon}
\newcommand{\w}[1]{\widetilde{#1}}
\newcommand{\ov}[1]{\overline{#1}}
\newcommand{\ptd}[1]{ \,^{\frak p}\!\tau_{ \leq {#1} } }
\newcommand{\td}[1]{ \tau_{ \leq {#1} } }
\newcommand{\tu}[1]{ \tau_{ \geq {#1} } }
\newcommand{\pc}[2]{ \,^{\frak p}\!{\mathcal H}^{#1}({#2})   }
\newcommand{\pcs}{ \,^{\frak p}\!{\mathcal H}   }
\newcommand{\ptd}[1]{ \,^\mathfrak{p}\!\tau_{ \leq {#1} } }
\newcommand{\td}[1]{ \tau_{ \leq {#1} } }
\newcommand{\tu}[1]{ \tau_{ \geq {#1} } }
\newcommand{\pc}[2]{ \,^\mathfrak{p}\!{\mathcal H}^{#1}({#2})   }
\newcommand{\pcs}{ \,^\mathfrak{p}\!{\mathcal H}   }
\newcommand{\sq}{\m{R}^q}
\newcommand{\pev}{\m{EV}}
\newcommand{\m}[1]{\mathcal{#1}}
\newcommand{\ms}[1]{\mathcal{#1}}
\newcommand{\bb}[1]{\mathbb{#1}}
\begin{document}

%
%
%
%
%
%

\title{Perverse sheaves and the topology of algebraic varieties}

%
%
\author{Mark Andrea A. de Cataldo}
\address{Department of Mathematics,
SUNY at Stony Brook,
Stony Brook,  NY 11794, USA}
\email{mark.decataldo@stonybrook.edu}

\thanks{
Partially supported by N.S.F. grant DMS-1301761 and by a grant from the Simons Foundation (\#296737 to Mark de Cataldo)}

\dedicatory{\large Dedicato a Mikki}

\date{July 2, 2015}

%
%

%
%
\maketitle

%
%

\tableofcontents

\providecommand{\bysame}{\leavevmode \hbox \o3em
{\hrulefill}\thinspace}

\textbf{Goal of the lectures.}
The goal of these lectures is to introduce the novice to the use of perverse sheaves in complex algebraic geometry
and to what is perhaps  the deepest known fact relating the  homological/topological invariants of  the source and target of 
a proper map of complex algebraic varieties, 
namely the  decomposition theorem.

\textbf{Notation.}
A variety is a  complex algebraic variety, which we do not assume to be irreducible, nor reduced. We work with cohomology with $\rat$-coefficients. $\zed$-coefficients do not fit well in our story. As we rarely focus on a single cohomological degree,
for the most part we consider the total, graded cohomology groups, e.g. $H^*(X,\rat)$.

\textbf{Bibliographical references.} The main reference is  the survey \ci{bams}  and the extensive bibliography contained in it,
most of which is not be reproduced here. 
This allowed me to try to minimize the continuous  distractions  related to  the peeling apart of  the various versions of the results and of the attributions. The reader may also consult the discussions in  \ci{bamsv1} that did not make it into the very different final version \ci{bams}. 

\textbf{Style of the lectures and of the lecture notes.} I hope to deliver my lectures in a rather informal style.
I plan to introduce some main ideas, followed by what I believe to be a striking application, often with an idea of proof.
The lecture notes are not intended to replace in any way the existing literature on the subject, they are a mere amplification
of what I can possibly touch upon  during the five one-hour lectures.
As it is usual when meeting a new concept, the theorems and the applications are very important, but I also believe that working with  examples,
no matter how lowly  they may seem,
can be truly illuminating and useful in building one own's local and global picture.  Because of the time factor, I cannot possibly fit
many of these examples in the flow of the  lectures. This is why there are plenty of exercises, which are not just about examples, but at time deal head-on  with actual important theorems. I could have laid-out several more exercises
(you can look at my lecture notes  \ci{trieste}, or at my little book \ci{decbook} for more exercises), but
I tried to choose ones that would complement well the lectures; too much of anything is  not a good thing anyway.

\textbf{What is missing from these lectures?} A lot! Two related topics come to mind: vanishing/nearby cycles and constructions of perverse sheaves; see the survey above for a quick introduction to both. To compound this infamy, there is no discussion
of the equivariant picture \ci{bl}.

\textbf{An afterthought.} The 2015 PCMI is now over. Even though I have been away from Mikki, Caterina, Amelie (Amie!) and Dylan
for three weeks, my PCMI experience has been wonderful. If you love math, then you should consider participating in future PCMIs.
Now, let us get to Lecture 1.

\section{Lecture 1: The decomposition theorem}\la{lz1}

\textbf{Summary of Lecture 1.}  {\em Deligne theorem on the degeneration of the Leray spectral sequence for smooth projective maps;
this is the 1968 prototype of the 1982 decomposition theorem.
Application, via the use of the theory of mixed Hodge structures, to the global invariant cycle theorem, a remarkable
topological property enjoyed by families of projective manifolds and  compactifications of their total spaces. The main theorem
of these lectures, the decomposition theorem,
stated in cohomology. Application to a proof of the local invariant cycle theorem, another remarkable  topological property
concerning the degenerations of families of projective manifolds. Deligne theorem, including semisimplicity
of the direct image sheaves, in the derived category. The decomposition theorem: the direct image complex splits in the derived category into a direct sum of shifted and twisted intersection complexes supported on the target of a proper map.}

\subsection{Deligne theorem in cohomology}\la{lz1dt}
 {\bf Warm-up: the K\"unneth formula and a question.} Let $Y,F$ be  varieties.
Then 
\beq\la{kufo}
H^*(Y\times F,\rat) = \bigoplus_{q\geq 0} H^{*-q}(Y,\rat) \otimes H^q(F,\rat).
\eeq
Note that the restriction map $H^*(Y\times F,\rat) \to H^*(F,\rat)$ is surjective.
\begin{??}\la{gictku} Let $F$ be a  projective manifold and let  $\ov{X}$ be a projective manifold completing $Y\times F$ (i.e. $Y\times F$ Zariski-dense in $\ov{X}$). What can we say
about the restriction map $H^*(\overline{X},\rat) \to H^*(F,\rat)$? 
\end{??}

{\bf Answer:} Theorem~\ref{gict} gives an answer in the  more general (and more interesting) setting of families of projective manifolds
and  the compactifications (in the sense above) of their total spaces.

The decomposition theorem  has an important precursor in Deligne theorem, which can be viewed as the decomposition theorem in the absence of singularities of the domain, of the target
{\em and}  of the map. We start by stating the cohomological version of his theorem.

 \begin{tm}\la{codeth} {\rm ({\bf Blanchard-Deligne 1968 theorem in cohomology} \ci{dess})}
For any smooth  projective map\footnote{Smooth: submersion; projective: factors as $X \to Y\times \bb{P}\to Y$ (closed embedding, projection)} $f: X\to Y$ of algebraic manifolds,  there is an isomorphism 
\beq\la{eq1}
H^*(X,\rat) \cong \bigoplus_{q\geq 0} H^{*-q}(Y, R^q f_*\rat_X).
\eeq
More precisely, the Leray spectral sequence (see \S\ref{exlz1})  of the map $f$ is $E_2$-degenerate.
\end{tm}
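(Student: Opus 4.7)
The plan is to reduce the statement to the relative Hard Lefschetz theorem applied fiberwise and then extract the degeneration of the spectral sequence. Since $f : X \to Y$ is projective, I would choose a relatively ample line bundle $\mathcal{L}$ on $X$ over $Y$ and set $\eta := c_1(\mathcal{L}) \in H^2(X,\rat)$; its restriction to every smooth projective fiber $X_y := f^{-1}(y)$ is an ample class. Smoothness of $f$ implies that the direct image sheaves $R^qf_*\rat_X$ are local systems on $Y$ whose stalk at $y$ is $H^q(X_y,\rat)$.

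The first key step is the relative Hard Lefschetz theorem. Applying classical Hard Lefschetz to each $(X_y,\eta|_{X_y})$ and varying $y$, one obtains, with $n$ the relative complex dimension, isomorphisms of local systems
\[
\eta^i\cup(-) : R^{n-i}f_*\rat_X \xrightarrow{\ \sim\ } R^{n+i}f_*\rat_X, \qquad 0 \le i \le n.
\]

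The second, main step is to leverage this symmetry to force $E_2$-degeneration of the Leray spectral sequence $E_2^{p,q} = H^p(Y,R^qf_*\rat_X) \Rightarrow H^{p+q}(X,\rat)$. Cup product with $\eta$ on $H^*(X,\rat)$ respects the Leray filtration and so induces an endomorphism of the entire spectral sequence that commutes with every differential $d_r$; on $E_2$ this endomorphism acts as cup product with $\eta$ at the level of sheaves. Combined with the relative Hard Lefschetz isomorphisms above, an inductive argument on $r \ge 2$ forces each $d_r$ to vanish: any nonzero $d_r$, transported via suitable powers of $\eta$, would break the primitive Lefschetz decomposition that is necessarily preserved by the $\eta$-equivariant $E_r$ page.

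The hard part is precisely this last argument\,---\,the manipulation that converts fiberwise Hard Lefschetz symmetry into the vanishing of higher Leray differentials. A conceptually cleaner path, and the one I would actually prefer to execute, is to prove a stronger derived-category statement first: by a splitting criterion of Deligne, any object $K \in D^b(Y)$ equipped with a morphism $\eta : K \to K[2]$ whose iterates induce isomorphisms $\mathcal{H}^{-i}(K) \xrightarrow{\sim} \mathcal{H}^i(K)$ splits canonically as $K \cong \bigoplus_q \mathcal{H}^q(K)[-q]$. Applied to an appropriate shift of $Rf_*\rat_X$ with the relative hyperplane class, this yields a decomposition in $D^b(Y)$, and applying $H^*(Y,-)$ delivers both the isomorphism \eqref{eq1} and the $E_2$-degeneration of the Leray spectral sequence as immediate corollaries.
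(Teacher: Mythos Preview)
Your proposal is correct and follows essentially the same approach as the paper: both invoke the hard Lefschetz theorem on the fibers to obtain the relative hard Lefschetz isomorphisms of the direct image local systems, then use the $\eta$-equivariance of the Leray spectral sequence together with the primitive Lefschetz decomposition to kill every $d_r$ (the paper's Exercise~\ref{dellef} makes the diagram chase explicit: a primitive class in $R^q$ is annihilated by a specific power of $\eta$, while that same power is injective on $R^{q-1}$, forcing $d_r$ to vanish on primitives and hence everywhere). The derived-category route you prefer at the end is exactly what the paper uses for the stronger Theorem~\ref{detm}, and the paper's remark at the close of Exercise~\ref{dellef} sketches that very argument.
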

\begin{proof} Exercise~\ref{dellef} guides you through Deligne's classical trick (the Deligne-Lefschetz criterion) of using the hard Lefschetz theorem  on the fibers to force the triviality of the differentials of the spectral sequence.
\end{proof}

Compare (\ref{kufo}) and (\ref{eq1}): both present cohomological shifts; both express the cohomology of the l.h.s. via
cohomology groups on $Y$; in the former case, we have cohomology with constant coefficients; in the latter, and this is crucial,
we have cohomology with locally constant coefficients.

Deligne theorem  is central  in the study of the topology of algebraic varieties. Let us discuss one striking application
of this result: the global invariant cycle theorem.

\subsection{The global invariant cycle theorem}\la{er431}
Let $f: X \to Y$ be a smooth  and projective map of algebraic manifolds, let $j:X \to \ov{X}$
be an open immersion into a projective manifold and let $y\in Y.$ What are
the images of $H^*(X,\rat)$ and $H^*(\ov{X},\rat)$ via the restriction maps into $H^*(f^{-1}(y), \rat)$?
The answer is the global invariant cycle Theorem~\ref{gict} below.

The direct image sheaf $\sq :=R^q f_* \rat_X$  on $Y$ is the sheaf associated with the pre-sheaf
\[U \mapsto H^q(f^{-1}(U), \rat). \]
In view of Ehresmann lemma, the proper\footnote{Proper :=  the pre-image of compact is compact; it is the ``relative'' version of compactness} submersion $f$
is a $C^\infty$ fiber bundle. The sheaf $\sq$ is then locally constant with stalk \[\sq_y=H^q(f^{-1}(y), \rat).\]
The fundamental group $\pi_1 (Y,y)$ acts via linear transformations on  $\sq_y$: pick a loop $\gamma (t)$ at $y$
and use a trivialization of the bundle along the loop to move vectors in $\sq_y$ along $\sq_{\gamma (t)}$,
back to $\sq_y$ (monodromy action for the locally constant sheaf~$\sq$). 

The global 
sections of $\sq$ identify with the monodromy invariants $(\sq_y)^{\pi_1} \subseteq  \sq_y.$ Note that this subspace is defined
topologically. The cohomology group $\sq_y= H^q(f^{-1}(y), \rat)$ has it own Hodge $(p,p')$-decomposition (pure Hodge
structure of weight $q$), an
algebro-geomeric structure. 

How is  $(\sq_y)^{\pi_1} \subseteq  \sq_y$ placed  with respect to the  Hodge structure?

The $E_2$-degeneration Theorem~\ref{codeth} yields  the following immediate, yet, remarkable, 
consequence:  \beq\la{grict}
H^q (X,\rat) \stackrel{\rm{surj}}\lorw  (\sq_y)^{\pi_1} \subseteq \sq_y,
\eeq
 i.e.  {\em the restriction map in cohomology, which automatically factors
through the invariants, in fact factors surjectively through them.}

The theory of mixed Hodge structures now tells us that the monodromy invariant subspace
$(\sq_y)^{\pi_1} \subseteq \sq_y$ (a topological gadget) is in fact  a Hodge substructure, i.e. it inherits the Hodge $(p,p')$-decomposition
(the algebro-geometric gadget).  

The same mixed theory implies that  highly non trivial fact (Exercise~\ref{rty60}) that the images of the restriction maps  from $H^*(\ov{X}, \rat)$ and $H^*(X, \rat)$ 
into $H^*(f^{-1}(y), \rat)$ coincide. 

We have reached the following conclusion, proved by Deligne in 1972.

\begin{tm}\la{gict} {\rm {\bf (Global invariant cycle theorem \ci{ho2})}}
Let $f: X \to Y$ be a smooth  and projective map of algebraic manifolds, let $j:X \to \ov{X}$
be an open immersion into a projective manifold and let $y\in Y.$ Then the images 
of $H^*(\ov{X},\rat)$ and $H^*(X, \rat)$ into $H^*(f^{-1}(y),\rat)$ coincide with the subspace of monodromy invariants.
In particular, this latter is  a Hodge substructure of the pure Hodge structure $H^q(f^{-1}(y), \rat)$.
\end{tm}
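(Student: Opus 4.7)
I would split the statement into three assertions and handle them in turn: (a) the restriction $r_X\colon H^*(X,\rat)\to H^*(f^{-1}(y),\rat)$ surjects onto the monodromy invariants; (b) the restriction $r_{\ov X}\colon H^*(\ov X,\rat)\to H^*(f^{-1}(y),\rat)$ has the same image as $r_X$; (c) this common image is a Hodge substructure of $H^q(f^{-1}(y),\rat)$. Assertion (a) is essentially immediate from Theorem~\ref{codeth}: after replacing $Y$ by the connected component containing $y$, the $E_2$-degeneration of the Leray spectral sequence makes the edge homomorphism
\[
H^q(X,\rat)\twoheadrightarrow E_\infty^{0,q}=E_2^{0,q}=H^0(Y,\sq)
\]
surjective; global sections of a local system on a connected base coincide with the $\pi_1$-invariants of any stalk, so $H^0(Y,\sq)=(\sq_y)^{\pi_1(Y,y)}$, and composition with evaluation at $y$ recovers restriction to the fiber. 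This is already recorded as (\ref{grict}).

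For (b), the image of $r_{\ov X}$ sits inside the image of $r_X$ since $f^{-1}(y)\hookrightarrow X\hookrightarrow\ov X$; the reverse inclusion is the heart of the proof and requires Deligne's theory of mixed Hodge structures. All three cohomology groups carry functorial MHS, and both restriction maps are morphisms of MHS. Because $\ov X$ and $f^{-1}(y)$ are smooth projective, $H^q(\ov X,\rat)$ and $H^q(f^{-1}(y),\rat)$ are pure of weight $q$. For the smooth (possibly non-proper) variety $X$, Deligne shows $W_{q-1}H^q(X,\rat)=0$ and
\[
W_qH^q(X,\rat)=\im\bigl(H^q(\ov X,\rat)\to H^q(X,\rat)\bigr)
\]
for any smooth projective compactification. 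Strictness of MHS morphisms with respect to $W_\bullet$ then forces $r_X$, whose target is pure of weight $q$, to kill everything of weight $>q$ and hence to factor through $W_qH^q(X,\rat)$. Therefore $\im r_X=\im r_{\ov X}$.

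Part (c) is then formal: $\im r_{\ov X}$ is the image of a morphism between pure Hodge structures of the same weight $q$, hence a Hodge substructure of $H^q(f^{-1}(y),\rat)$. Combining (a), (b), (c), the monodromy invariants are identified with this sub-Hodge structure. The main obstacle is the MHS input in (b)—the identification of $W_qH^q(X,\rat)$ with the image coming from a smooth compactification (independently of the chosen compactification) and the strictness of MHS morphisms with respect to the weight filtration—both non-trivial results from Deligne's Hodge II and Hodge III that I would invoke as a black box; everything else is a formal consequence of Theorem~\ref{codeth}.
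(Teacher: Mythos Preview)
Your proposal is correct and follows essentially the same route as the paper: part (a) is exactly the surjection (\ref{grict}) coming from the $E_2$-degeneration of Theorem~\ref{codeth}; part (b) is precisely the content of Exercise~\ref{rty60}, using the identification $W_qH^q(X,\rat)=\im\bigl(H^q(\ov X,\rat)\to H^q(X,\rat)\bigr)$ for smooth $X$ together with strictness; and part (c) is the formal observation that the image of a morphism of pure Hodge structures of the same weight is a Hodge substructure. One small wording quibble: in (b) strictness does not make $r_X$ ``kill everything of weight $>q$''; rather it guarantees that $\im r_X = r_X\bigl(W_qH^q(X,\rat)\bigr)$, which is what you actually use.
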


This theorem provides a far-reaching answer to Question~\ref{gictku}. Note that the Hopf examples in Exercise~\ref{hopft}
show that such a nice  general answer is not possible outside of the realm of complex algebraic geometry: there are two obstacles,
i.e.  the non $E_2$-degeneration, and  the absence
of the special kind of global constraints imposed by mixed Hodge structures.

\subsection{Cohomological decomposition theorem}\la{thedtc}$\;$
The decomposition theorem   is a generalization  of Deligne's  Theorem~\ref{codeth} for smooth proper
maps to the case of arbitrary  proper maps of algebraic varieties: compare (\ref{eq1}) and (\ref{3edd}).  It was first proved by
Beilinson-Bernstein-Deligne-Gabber in their monograph \ci{bbd} (Th\'eor\`em 6.2.5) on perverse sheaves.

A possible initial psychological drawback, when compared with Deligne's theorem, is that 
even if one insists in  dealing with maps of  projective manifolds, the statement is not about
cohomology with locally constant coefficients, but it  requires the  Goresky-MacPherson intersection cohomology groups
with twisted coefficients on various subvarieties of the target of the map. However, this is precisely why this theorem is so striking!

To get to the point, 
for now we simply say that  we have  the intersection 
cohomology groups $IH^*(S,\rat)$  of an irreducible variety $S$; they agree with the ordinary
cohomology groups when $S$ is nonsingular.  The theory is very flexible
as it allows for twisted coefficients: given a locally constant sheaf $L$ on a dense open subvariety $S^o \subseteq
S_{reg} \subseteq  S,$ we get the intersection  cohomology groups $IH^*(S,L)$  of $S$.
We may call such pairs $(S,L)$, enriched varieties (see \ci{macicm}, p.222); this explains the notation $\pev$ below.


\begin{tm}\la{dt00} {\rm ({\bf Cohomological decomposition theorem})}
Let $f: X \to Y$ be a proper map of complex algebraic varieties. For every $q \geq 0$, there is a finite collection $\pev_q$ of pairs
 $(S,L)$ with $S\subseteq Y$ pairwise distinct  closed subvarieties of Y,  and an isomorphism
\beq\la{3edd}
IH^*(X,\rat) \cong \oplus_{q\geq 0,\pev_q} IH^{*-q}(S,L).
\eeq
\end{tm}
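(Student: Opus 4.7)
The plan is to derive (\ref{3edd}) from a decomposition statement in the derived category $D^b_c(Y, \rat)$ of constructible complexes on $Y$ and then pass to hypercohomology. Writing $IC_X$ for the intersection cohomology complex of $X$ (so that $IH^*(X, \rat) = \bb{H}^*(X, IC_X)$), the goal becomes a splitting of $Rf_* IC_X$ in $D^b_c(Y, \rat)$. I would first develop just enough of the formalism of perverse sheaves to have the middle-perversity $t$-structure, the perverse cohomology functors $\pc{q}{-}$, and the classification fact that the simple perverse sheaves on $Y$ are exactly the intermediate extensions $IC_S(L)$ for $S \subseteq Y$ irreducible closed and $L$ a simple local system on a smooth dense open of~$S$.

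The core assertion I would then prove is the derived-category decomposition theorem: for proper $f$,
\[
Rf_* IC_X \;\cong\; \bigoplus_{q} \pc{q}{Rf_* IC_X}[-q]
\]
in $D^b_c(Y, \rat)$, and each perverse cohomology sheaf $\pc{q}{Rf_* IC_X}$ is \emph{semisimple}. Combined with the classification of simple perverse sheaves, semisimplicity yields finite decompositions
\[
\pc{q}{Rf_* IC_X} \;\cong\; \bigoplus_{(S,L)\,\in\, \pev_q} IC_S(L),
\]
indexed by finite sets $\pev_q$ of enriched subvarieties of $Y$. Taking $\bb{H}^*(Y, -)$ of both displays and using $IH^*(S, L) = \bb{H}^*(Y, IC_S(L))$ then gives (\ref{3edd}).

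The splitting into shifted perverse cohomologies I would obtain via the relative hard Lefschetz theorem: after a suitable compactification, an $f$-ample line bundle on $X$ gives a Lefschetz operator acting on $Rf_* IC_X$, and a perverse-$t$-structure version of the Deligne--Lefschetz criterion already exploited for Theorem~\ref{codeth} forces the splitting.

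The main obstacle is semisimplicity of the perverse cohomology sheaves, and I see two standard routes. The original BBD route: spread $f$ out to a finitely generated $\zed$-algebra, reduce mod $p$ to a map of varieties over a finite field, verify that the $\ell$-adic analogue of $Rf_* IC_X$ is a \emph{pure} complex by Deligne's Weil~II, invoke Gabber's theorem that pure perverse sheaves over $\ov{\bb{F}_q}$ are geometrically semisimple, and transfer back to the complex-analytic setting. Alternatively, a Hodge-theoretic route (Saito, or de Cataldo--Migliorini): equip the perverse cohomology sheaves with a polarization coming from ambient Hodge data and deduce semisimplicity formally from the existence of such a polarization. Either way, the conceptual difficulty is concentrated in purity/polarizability; the passage from purity to splitting and semisimplicity is then formal.
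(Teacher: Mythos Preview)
Your proposal is correct and is exactly the approach the paper takes: the cohomological statement (\ref{3edd}) is not proved directly but is obtained by taking hypercohomology of the derived-category decomposition theorem (Theorem~\ref{dt01}), whose proof in turn hinges on the relative hard Lefschetz theorem plus the Deligne--Lefschetz criterion for the splitting into shifted perverse cohomology sheaves, together with semisimplicity established either via weights (BBD) or via Hodge theory (Saito, de~Cataldo--Migliorini). One small caveat: your relative hard Lefschetz step presupposes an $f$-ample class, hence projectivity of $f$; the passage from projective to arbitrary proper maps requires an additional reduction (Chow's lemma and tinkering), which the paper also flags but does not carry out.
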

Note that the same $S$ could appear for distinct $q$'s.

Deligne theorem in cohomology is a special case. In particular, we can deduce an appropriate version
of the global invariant cycle theorem \ci{bbd}, 6.2.8. Let us instead focus on its local counterpart.

\subsection{The local invariant cycle theorem}\la{pqpw3}
 The decomposition  theorem (\ref{3edd}) has a local flavor over the
target $Y$, in both the Zariski and in the classical topology: replace $Y$ by an open set $U\subseteq Y$, $X$ by $f^{-1}(U)$,
and $S$ by $S \cap U$.

 Let us focus on the classical topology. Let $X$ be nonsingular; this is for the sake of our discussion, for then 
 $IH^*(X,\rat)= H^*(X,\rat)$.
 
Let $y\in Y$ be a point and let us pick a small Euclidean ``ball'' $B_y \subseteq Y$ centered at $y$, 
so that   (\ref{3edd}) reads: 
\[H^*(f^{-1}(y),\rat)=H^* (f^{-1}(B_y),\rat) = \oplus_{q,\pev_q} IH^{*-q} (S\cap B_y, L).\]
(The first ``$=$'' above is non-trivial, as it follows from the constructibility of the direct image complex $Rf_*\rat_X$, 
so that the second term can be identified with the stalk $(R^*f_*\rat_X)_y$, and by the proper base change theorem, that ensures
that this latter is the first term; see Fact~\ref{moimp}.)
Let $f$ be surjective. Let $f^o: X^o \to Y^o$ the restriction of the map $f$ over the open subvariety of $Y$
of  regular
values for $f.$ Let $y^o \in B_y$ be a regular value for $f$.

By looking at Deligne theorem for the map $f^o$ it seems reasonable to expect
that for every $q$ one of the summands in (\ref{3edd})  should be $IH^{*-q}(Y, L_q)$, where $L_q$
is the locally constant $R^qf^o_*\rat$.  This is indeed the case.

If follows that for every $q\geq 0$, we have that $IH^0(B_y, {L_q}_{|Y^o\cap B_y})$ is a direct summand of $H^q(f^{-1}(y),\rat)$,
let us even say that the latter surjects onto the former. Note that we did not assume that $y\in Y^o$. 

The intersection  cohomology group $IH^0(Y, L_q)$ is the space of monodromy invariants
for the representation $\pi_1(Y^o\cap B_y,y^o) \to GL (H^q(f^{-1}(y^o),\rat)$.  Abbreviate the
fundamental group notation to $\pi_{1,loc}$.

We have reached a very important conclusion: 

\begin{tm}\la{lict} {\rm ({\bf Local invariant cycle theorem}, \ci{clem} and \ci{bbd}, 6.2.9)}
Let $f: X \to Y$ be a proper surjective map of algebraic varieties with $X$ nonsingular. Let $y\in Y$ be any point, let $B_y$
be a small Euclidean ball on $Y$ at $y$,  let $y^o \in B_y$ be a regular value of $f.$
Then $H^*(f^{-1}(y),\rat)=H^*(f^{-1}(B_y),\rat)$ surjects onto the local monodromy invariants $H^*(f^{-1}(y^o),\rat)^{\pi_{1,loc}}.$ 
\end{tm}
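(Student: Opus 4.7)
The plan is to deduce the statement from a local application of the cohomological decomposition Theorem~\ref{dt00}, by singling out one particular summand whose zeroth intersection cohomology is exactly the space of local monodromy invariants.

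First, since $X$ is nonsingular, $IH^*(X,\rat)=H^*(X,\rat)$. The decomposition theorem has a local character on the target, so restricting $Y$ to $B_y$ and $X$ to $f^{-1}(B_y)$ yields an isomorphism
\[H^*(f^{-1}(B_y),\rat) \;\cong\; \bigoplus_{q\geq 0,\;\pev_q} IH^{*-q}(S\cap B_y, L).\]
I then use Fact~\ref{moimp} to identify the left-hand side with $H^*(f^{-1}(y),\rat)$: for $B_y$ sufficiently small, the former is the stalk at $y$ of the constructible complex $Rf_*\rat_X$, and proper base change equates that stalk with the fiber cohomology.

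The crux is to show that, for each $q\geq 0$, one of the pairs in $\pev_q$ is of the form $(Y, L_q)$ with $L_q$ the local system $R^q f^o_*\rat$ on the smooth locus $Y^o$. The reason is that over $Y^o$ the restricted map $f^o$ is smooth projective, so Deligne's Theorem~\ref{codeth} splits $Rf^o_*\rat_{X^o}$ into the shifted local systems $L_q[-q]$; by the compatibility of the decomposition~(\ref{3edd}) with restriction to the open set $Y^o$ and the uniqueness of the intermediate (IC) extension of each $L_q$ across $Y\setminus Y^o$, the pair $(Y,L_q)$ must appear among the summands. After the localization of the previous paragraph this produces $IH^{*-q}(B_y, L_q)$ as a direct summand of $H^*(f^{-1}(y),\rat)$. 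Finally, I select cohomological degree $q$ from this summand; because $B_y$ is contractible, $IH^0(B_y, L_q)$ equals the space of global sections of $L_q$ on $Y^o\cap B_y$, i.e.\ the monodromy invariants $H^q(f^{-1}(y^o),\rat)^{\pi_{1,loc}}$. Being a direct summand of $H^q(f^{-1}(y),\rat)$, it is in particular a quotient, which gives the surjection asserted in the theorem.

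The main obstacle is the middle step: the statement of Theorem~\ref{dt00} alone does not let one pick out the particular summand $(Y,L_q)$, and doing so relies on both the compatibility of the decomposition with open restriction to $Y^o$ and the uniqueness of IC-extensions across $Y\setminus Y^o$. Both of these facts live most naturally in the sheaf-theoretic refinement of the decomposition theorem developed later in the lectures.
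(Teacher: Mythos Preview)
Your proposal is correct and follows essentially the same line of argument as the paper's discussion in \S\ref{pqpw3}: localize the decomposition theorem over $B_y$, identify $H^*(f^{-1}(B_y))$ with $H^*(f^{-1}(y))$ via constructibility and proper base change, and then single out the summand $IH^{*-q}(B_y, L_q)$ whose degree-zero piece is the space of local monodromy invariants. Your self-identified ``main obstacle'' is precisely the point the paper glosses over with ``This is indeed the case'' and later formalizes as the DT Localization Principle (Fact~\ref{locprin}); one cosmetic note is that the identification $IH^0(B_y,L_q)=H^0(Y^o\cap B_y,L_q)$ comes from $\m{H}^0(\m{IC})=j_*L_q$ rather than from contractibility of $B_y$.
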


\subsection{Deligne theorem}\la{dtder}
In fact, Deligne proved something stronger than his cohomological theorem (\ref{eq1}), he proved a decomposition
theorem for the derived direct image under a smooth proper map.

{\bf Pre-warm-up: cohomological shifts.} Given a $\zed$-graded object $K=\oplus_{i \in \zed}K^i$, like the total cohomology
of a variety,  or a complex (of sheaves, for example) on it,  or the total cohomology of such a complex, etc.,  and given an integer $a \in \zed$, we can shift
by the amount $a$  and get a new graded object (with $K^{i+a}$ in degree~$i$)
\beq\la{shifts}
K[a] := \bigoplus_{i\in \zed} K^{i+a}.
\eeq
If $a>0,$ then the effect of this operation is to  ``shift $K$ back by $a$ units.''  Again, if $K$ has non zero entries contained in an interval
$[m,n]$, then $K[a]$ has  non zero entries contained in $[m-a,n-a]$. We have the following basic relation, e.g. for complexes
of sheaves
\[\m{H}^i(K[a])  = \m{H}^{i+a}(K).\] A sheaf $F$ can be viewed as a complex placed in cohomological degree zero;
we can then take  the $F[a]$'s. We can take  a collection of $F_q$'s and form $\oplus_q F_q[-q]$,  which is a complex with {\em trivial differentials.}
Then \[
H^*(Y,\oplus_q F_q[-q])=\oplus_q H^{*-q}(Y, F_q).\]

{\bf Warm-up: K\"unneth for the derived direct image.} Let $f: X:=Y\times F \to Y$ be the projection. Then
there is a canonical isomorphism 
\beq\la{ku12}
Rf_*\rat_X = \oplus_{q\geq 0} \underline{H}^q(F)  [-q]
\eeq
where $\underline{H}^q(F)$ is the constant sheaf on $Y$ with stalk $H^q(F,\rat)$.
The isomorphism takes place in the derived category of the category of sheaves of rational vector spaces on $Y$; this is where  we find the direct image complex $Rf_* \rat_X$, whose cohomology
is  the cohomology of $X$: $H^*(Y, Rf_* \rat_X) = H^*(X,\rat)$.  Exercise~\ref{kunneth} asks you to prove (\ref{ku12}).

Now to Deligne's 1968  theorem.
\begin{tm}\la{detm} {\rm ({\bf Deligne 1968 theorem \ci{dess}; semisimplicity in 1972 \ci{ho2}, \S4.2)} } Let $f: X\to Y$ be a smooth  proper map of algebraic varieties.  ``The derived image complex has trivial differentials'', more precisely, there
is an isomorphism in the derived category
\beq\la{eq1a}
Rf_* \rat_X \cong  \bigoplus_{q\geq 0} R^qf_* \rat[-q].
\eeq
Moreover, the locally constant  direct image sheaves $R^qf_* \rat_X$ are semisimple.
\end{tm}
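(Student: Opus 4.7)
The plan is to upgrade the cohomological $E_2$-degeneration argument (via hard Lefschetz on fibers) to a statement in the derived category, and then to extract semisimplicity from the fact that the local systems $R^qf_*\rat_X$ underlie a polarizable variation of Hodge structure on $Y$.

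First I would produce a relative ample class. Since $f$ is projective, there is a factorization $X \hookrightarrow Y \times \bb{P}^N \to Y$, which gives a line bundle on $X$ whose first Chern class $\eta \in H^2(X,\rat)$ restricts to an ample class on every fiber $X_y := f^{-1}(y)$. Cup product with $\eta$ is a morphism in the derived category
\[
\eta : Rf_* \rat_X \longrightarrow Rf_* \rat_X[2],
\]
whose effect on the $q$-th cohomology sheaf is a map $\eta : R^q f_* \rat_X \to R^{q+2} f_* \rat_X$. Because the fibers of $f$ are smooth projective varieties of some fixed relative dimension $n$, the classical hard Lefschetz theorem applied fiberwise implies that for every $q \geq 0$ the iterated map
\[
\eta^{n-q} : R^{n-q} f_* \rat_X \stackrel{\sim}{\lorw} R^{n+q} f_* \rat_X
\]
is an isomorphism of locally constant sheaves on $Y$.

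Next I would invoke the Deligne--Lefschetz splitting criterion: whenever a complex $K$ on $Y$ carries an arrow $\eta : K \to K[2]$ whose iterates induce hard-Lefschetz-type isomorphisms on the cohomology sheaves $\m{H}^\bullet(K)$, there is a (non-canonical) isomorphism $K \cong \bigoplus_i \m{H}^i(K)[-i]$. This is exactly the derived-category analogue of the trick used in Exercise~\ref{dellef}: one uses the Lefschetz isomorphisms to construct, by descending induction on $q$, splittings of the truncation triangles $\ptd{q-1} \to \ptd{q} \to \m{H}^q(K)[-q]$ of $K = Rf_*\rat_X$, and assembles them into the required isomorphism (\ref{eq1a}). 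Taking global cohomology then recovers Theorem~\ref{codeth}.

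For semisimplicity of the $R^q f_* \rat_X$, I would use Hodge theory. The fibers $X_y$ are smooth projective, so each stalk $H^q(X_y,\rat)$ carries a pure Hodge structure of weight $q$ polarized by $\eta_{|X_y}$; as $y$ varies this assembles into a polarizable $\rat$-variation of Hodge structure on $Y$ whose underlying local system is $R^q f_* \rat_X$. The monodromy therefore preserves a non-degenerate bilinear form (the polarization), and any local subsystem $L' \subset R^q f_* \rat_X$ is itself a sub-variation of Hodge structure; its orthogonal complement under the polarization is then a complementary sub-variation, giving a splitting as local systems. This is Deligne's semisimplicity theorem for monodromy of polarized VHS.

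\textbf{Main obstacle.} The genuinely delicate step is the Deligne--Lefschetz criterion: having the Lefschetz isomorphisms on the cohomology sheaves is much weaker than having an actual splitting of $Rf_*\rat_X$ in the derived category, and one must carefully exploit $\eta$ to produce compatible splittings of all the truncation triangles at once. The second subtlety is the semisimplicity: it is not formal from the existence of a polarization that a sub-local-system is itself a sub-VHS (so that the orthogonal complement makes sense), and verifying this requires the rigidity properties of polarizable $\rat$-VHS established in Deligne's Hodge~II.
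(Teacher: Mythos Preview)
Your approach is essentially the paper's: the splitting is obtained via the Deligne--Lefschetz criterion (relative hard Lefschetz on fibers forces the derived splitting, as outlined at the end of Exercise~\ref{dellef} by degenerating the spectral sequence for $\mathrm{Hom}(\sq[-q], Rf_*\rat_X)$ and lifting the identity), and semisimplicity comes from the Hodge-theoretic theory of weights, i.e.\ exactly the polarized VHS argument you sketch. Two small slips: the truncation triangles you split are the \emph{standard} ones $\td{q-1} \to \td{q} \to \m{H}^q(K)[-q]$, not perverse truncations; and your argument silently assumes $f$ is projective (to get the relatively ample $\eta$), which is also the hypothesis under which the paper's hard-Lefschetz proof actually runs, even though the statement says ``proper''.
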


Theorem~\ref{detm}(\ref{eq1a}), which is proved by means of an $E_2$-degeneration argument\footnote{People refer to it as the 
Deligne-Lefschetz criterion} along the lines
of the one in Exercise~\ref{dellef},  is the ``derived'' version of
  (\ref{eq1}), which follows   by taking cohomology on both sides
of (\ref{eq1a}). In addition to \ci{dess}, you may want to consult the first two pages of
\ci{shockwave}. The semisimplicity result is one of the many amazing applications of the theory of weights (Hodge-theoretic, or Frobenius).

{\bf Terminology and facts about semisimple locally constant sheaves.} To give a locally constant sheaf on $Y$ is the same as giving a representation
of the fundamental group of $Y$ (Exercise~\ref{098}). By borrowing from  the language of representations, we have the notions of
simple (no non trivial locally constant subsheaf; a.k.a. irreducible) and semisimple (direct sum of simples; a.k.a.
completely reducible), indecomposable (no non trivial direct sum decomposition) locally constant sheaves.

Once one has semisimplicity, one can decompose further.
For a semisimple locally constant sheaf $L$, we have the canonical isotypical direct sum decomposition
\beq\la{isotyp}
L = \oplus_\chi L_\chi,
\eeq where each summand is the span of all
mutually isomorphic simple subobjects, and the direct sum ranges over the set of isomorphism classes of irreducible representations
of the fundamental group. In particular, in (\ref{eq1a}), we have $R^qf_* \rat_X =\sq = \oplus_\chi \sq_\chi.$

{\bf What is semisimplicity good for?} Here is the beginning of an answer:
look at Exercise~\ref{hl}, where it is put to good use to give Deligne's proof in \ci{weil2} of the Hard Lefschetz theorem.
In the context of the decomposition theorem, the semisimplicity of the perverse direct images is an essential
ingredient in the proof of the relative hard Lefschetz theorem; see \ci{bbd} and \ci{decmightam} (especially, \S5.1 and \S6.4).

\subsection{The decomposition theorem}\la{thedt}
As we have seen,  Deligne theorem in cohomology has a counterpart in the derived category.
The cohomological decomposition Theorem~\ref{dt00}  also has a stronger counterpart in the derived category,
i.e Theorem~\ref{dt01}.

In these lectures,
 we adopt   a  version of the decomposition theorem that is more general, and simpler to state!, than the one  in \ci{bbd}, 6.2.5 (coefficients of geometric origin)
  and of \ci{samhm} (coefficients in polarizable variations of pure Hodge structures). The version we adopt
is due essentially  to T. Mochizuki \ci{mochizuki}  (with important contributions of C. Sabbah \ci{sabbah})
 and it involves  semisimple coefficients. \ci{mochizuki} works in the context of projective maps of quasi-projective varieties
 and with $\comp$-coefficients; one needs a little bit of tinkering to reach the same conclusions
 for proper maps of complex varieties with  $\rat$-coefficients (to my knowledge, this is not in the literature).

{\bf Warning: $\m{IC}$ vs. $IC$.} We are about to meet the main protagonists of our lectures, the intersection complexes $\m{IC}_S(L)$
with twisted coefficients; in fact,
the actual protagonists are the shifted (see (\ref{shifts}) for the notion of shift):
\beq\la{icvsic}
IC_S(L):= \m{IC}_S(L) [\dim{S}],
\eeq which are perverse sheaves on $S$ and on any 
variety $Y$ for which $S\subseteq Y$ is closed. While $\m{IC}_S(L)$ has non-trivial cohomology sheaves only in the interval
$[0, \dim{S}-1]$, the analogous interval for  $IC_S(L)$ is $[-\dim{S}, -1]$.  Instead of discussing the pro and cons of either notation, let us move on.

{\bf Brief on intersection complexes.}
The intersection cohomology groups of an enriched variety $(S,L)$ are in fact the cohomology groups
of $S$ with coefficients in a very special complex of sheaves called the intersection complex of $S$ with coefficients in $L$
and denoted by $\m{IC}_S(L)$: we have $IH^*(S,L)=H^*(S, \m{IC}_S(L))$.  If $S$ is nonsingular, and $L$ is constant of rank one,
then $\m{IC}_S=\m{IC}_S (\rat) =\rat_S.$ The decomposition theorem in cohomology
(\ref{3edd}) is the shadow in cohomology of a decomposition of the direct image complex $Rf_* \m{IC}_X$
in the derived category of sheaves of rational vector spaces on $Y$. In fact, the dt holds
in the greater generality of  semisimple coefficients.

\begin{tm}\la{dt01} {\rm ({\bf Decomposition theorem})}
Let $f: X \to Y$ be a proper map of complex algebraic varieties. Let $\m{IC}_X(M)$ be the  intersection complex
of $X$ with semisimple twisted coefficients $M$.  For every $q \geq 0$, there is a finite collection $\pev_q$ of pairs
 $(S,L)$ with $S$ pairwise distinct\footnote{The same $S$ could appear for distinct $q$'s} and  $L$ semisimple, and an isomorphism
\beq\la{eq2}
\xymatrix{
Rf_* \m{IC}_X (M) \cong \bigoplus_{q\geq 0, \pev_q} \m{IC}_S (L) [-q].}
\eeq
In particular, by taking cohomology:
\beq\la{3ed}
IH^*(X,M) \cong \bigoplus_{q,\pev_q} IH^{*-q}(S,L).
\eeq
\end{tm}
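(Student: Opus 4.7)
I would follow the overall strategy of Beilinson-Bernstein-Deligne-Gabber \ci{bbd}, with the semisimplicity input supplied by the theory of polarizable pure Hodge modules (Saito \ci{samhm}) or, in the generality of an arbitrary semisimple local system, by pure polarizable twistor $\m{D}$-modules (Mochizuki \ci{mochizuki}, Sabbah \ci{sabbah}). By Chow's lemma I would first reduce to the case where $f$ is projective, so that an $f$-ample class $\eta \in H^2(X,\rat)$ is available; the passage between $\comp$- and $\rat$-coefficients and between the quasi-projective and general complex settings is routine bookkeeping. The proof then splits into three stages: (1) split $Rf_*\m{IC}_X(M)$ in the derived category into the direct sum of its shifted perverse cohomology sheaves; (2) show that each such perverse cohomology sheaf is a semisimple perverse sheaf on $Y$; (3) decompose each semisimple perverse sheaf into intersection complexes.

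\textbf{Stage (1): splitting.} The essential input is the relative hard Lefschetz theorem: iterated cup product with $\eta^q$ induces an isomorphism
\[ \eta^q \colon \pc{-q}{Rf_*\m{IC}_X(M)} \stackrel{\sim}{\lorw} \pc{q}{Rf_*\m{IC}_X(M)} \]
for every $q \geq 0$. The Deligne-Lefschetz criterion, applied here at the level of perverse cohomology rather than ordinary cohomology (compare Exercise~\ref{dellef} and the proof of Theorem~\ref{detm}), then forces the perverse Leray spectral sequence to degenerate at $E_2$ and produces a (non-canonical) splitting $Rf_*\m{IC}_X(M) \cong \bigoplus_{q} \pc{q}{Rf_*\m{IC}_X(M)}[-q]$ in the derived category. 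Granted relative hard Lefschetz, this step is essentially formal.

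\textbf{Stage (2): semisimplicity.} This is the main obstacle and the real substance of the theorem; it is precisely the property that fails for proper holomorphic maps of compact complex manifolds (cf.\ the Hopf-type examples alluded to after Theorem~\ref{gict}). I would realize $\m{IC}_X(M)$ as the rational perverse sheaf underlying a pure polarizable Hodge module (resp.\ twistor $\m{D}$-module) of appropriate weight on $X$; this is where the hypothesis that $M$ is semisimple is used in an essential way. Stability of purity under proper direct image then ensures that each $\pc{q}{Rf_*\m{IC}_X(M)}$ underlies a pure polarizable object on $Y$, and pure polarizable objects are automatically semisimple in the abelian category of perverse sheaves. The same purity/polarization package is also what supplies the relative hard Lefschetz theorem used in Stage~(1), so the two stages are really two facets of one piece of hard analysis.

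\textbf{Stage (3) and conclusion.} The abelian category of perverse sheaves on $Y$ is artinian, and its simple objects are exactly the intermediate extensions $\m{IC}_S(L)$ with $S \subseteq Y$ an irreducible closed subvariety and $L$ an irreducible local system on a smooth Zariski-dense open subset of $S$. Decomposing each semisimple $\pc{q}{Rf_*\m{IC}_X(M)}$ into its isotypical components and grouping together all simple summands with common support $S$ into a single semisimple local system $L$ on a dense open of $S_{\mathrm{reg}}$ produces the finite collections $\pev_q$ of pairs $(S,L)$ and the decomposition (\ref{eq2}). The cohomological form (\ref{3ed}) follows at once by applying $\mathbb{H}^*(Y,-)$ to both sides of (\ref{eq2}) and using the identity $\mathbb{H}^*(Y, \m{IC}_S(L)[-q]) = IH^{*-q}(S,L)$.
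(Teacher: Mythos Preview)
The paper does not give a proof of this theorem; it is presented as the central result of the lectures, attributed to \ci{bbd}, \ci{samhm}, \ci{decmightam}, \ci{sabbah}, \ci{mochizuki}, and the reader is referred to those sources. Your three-stage outline is nonetheless a faithful summary of the standard proof architecture and matches the paper's own informal discussion scattered across the lectures: the splitting via relative hard Lefschetz and the Deligne--Lefschetz criterion is exactly the content of (\ref{ghjuy}) in \S\ref{rhls}; the semisimplicity of the perverse cohomology sheaves via purity (Frobenius weights in \ci{bbd}, Hodge modules in \ci{samhm}, twistor $\m{D}$-modules in \ci{sabbah,mochizuki}) is the substance of the historical discussion in \S\ref{0i4}; and your Stage~(3) is precisely Exercise~\ref{artinian}. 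The paper also flags, as you do, that the passage from projective maps of quasi-projective varieties with $\comp$-coefficients (Mochizuki's setting) to proper maps of arbitrary complex varieties with $\rat$-coefficients requires additional tinkering that, to the author's knowledge, is not written down in the literature.
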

We have the isotypical decompositions (\ref{isotyp}), which can be plugged into  what above.

\begin{rmk}\la{noz}
{\rm 
The fact that there may be  summands associated with $S \neq Y$ should not come as a surprise. It is a natural fact due to the singularities 
(deviation from being smooth) of the map $f$. One does not need the decomposition theorem 
to get convinced: the reader can work out the case of the blowing up
of the affine plane at the origin; see  also Exercise~\ref{blowups}. In general, it is difficult to predict
which $S$ will appear in  the decomposition theorem; see parts 5 and 7 of Exercise~\ref{g0g0}.
}
\end{rmk}

\subsection{Exercises for Lecture 1}\la{exlz1}

\begin{exe}\la{loccst} {\rm ({\bf Ehresman lemma and local constancy of higher direct images
for proper submersions})
Let $f: X \to Y$ be a map of varieties and 
recall that  $q$-th direct image sheaf $\sq:= R^qf_* \rat_X$ is defined to be the sheafification of the presheaf $Y \supseteq U \mapsto H^q(f^{-1}(U), \rat)$.  If $f$ admits the structure of a $C^\infty$ fiber bundle, then the sheaves $\sq$ are locally constant,
with stalks the cohomology of the fibers. Give examples of maps where this last statement fails (hint: they cannot be proper).
If $f$ is a proper smooth map of complex algebraic varieties, then it admits a structure  of $C^\infty$ fiber bundle (Ehresmann lemma).
Deduce that nonsingular hypersurfaces of fixed degree in complex projective space are all diffeomorphic to each other.
Is the same true in real projective space? Why?
}
\end{exe}

{\bf Quick review of the Leray spectral sequence} (see Grothendieck's  gem ``Tohoku'').
The Leray spectral sequence
for a map $f:X\to Y$ (and for the sheaf $\rat_X$) is a gadget
denoted $E_2^{pq} = H^p(Y, R^qf_* \rat_X) \Rightarrow H^{p+q}(X, \rat).$ There are the natural differentials $d_r: E_r^{pq} \to E_r^{p+r, q-r+1}$,  $d_r^2=0,$
with $r \geq 2$ and $E_{r+1} = H^*(E_r,d_r)$. $E_2$-degeneration means that $d_r=0$ for every $r\geq 2$, so that one has
a cohomological  decomposition  $H^*(X,\rat) \cong \oplus_{q\geq 0} H^{*-q}(Y, R^qf_* \rat_X)$. Note that with $\zed$~coefficients,
$E_2$-degeneration does not imply the existence of an analogous splitting.

\begin{exe}\la{hopft}{\rm  ({\bf Maps of Hopf-type})
Let $a:\comp^2 \setminus o \to \pn{1}\cong S^2$ be the usual map $(x,y) \mapsto (x:y)$.
It induces two more maps, $b: S^3 \to S^2$ and $c: HS:=(\comp^2 \setminus o)/\zed
\to \pn{1}$ (where $1 \in \zed$ acts as multiplication by two). These three maps are
fiber bundles. Show that there cannot be a cohomological decomposition  as in (\ref{eq1}).
Deduce that their  Leray spectral sequence are   not
$E_2$-degenerate. Observe that the conclusion of the global invariant cycle theorem concerning the surjectivity
onto the monodromy invariants  fails  in all three cases.
}
\end{exe}

\begin{exe}\la{dellef}
{\rm  ({\bf Proof of the cohomological decomposition (\ref{eq1}) via hard Lefschetz})
Let us recall the hard Lefschetz theorem: let $X$ be a projective manifold  of dimension $d$, and let $\eta \in H^2(X,\rat)$
be the first Chern class of an ample line bundle on $X;$ then for every $q \geq 0$,  the iterated cup product maps
$\eta^{d-q}:~H^q(X,\rat) \to H^{2d-q}(X,\rat)$ are isomorphisms. Deduce the primitive Lefschetz decomposition:
for every $q\leq d$, set $H^{q}_{\text{prim}}:= \ke\{\eta^{d-q+1}: H^{q} \to H^{2d-q+2} \}$;  then we have, for every $0 \leq q \leq d$
$H^q = \oplus_{j\geq 0} H^{q-2j}_{\text{prim}}$, and, for $d \leq q\leq 2d$, we have  $H^q= \eta^{q-d} \cup (\oplus_{j\geq 0} H^{q-2d-2j}_{\text{prim}}).$
Let $f : X\to Y$ be as in (\ref{eq1}), i.e. smooth and projective and let $d:= \dim{X}-\dim{Y}.$ Apply the hard Lefschetz theorem to the fibers
of the smooth map $f$ and deduce the analogue of the primitive Lefschetz decomposition for the direct image sheaves
$\sq:=Rf_*\rat_X$. Argue that in order to deduce (\ref{eq1}) it is enough to show the differentials ${d_r}$ of the spectral sequence
vanish on $H^p(Y, \sq_{\text{prim}})$ for every $q \leq d.$ Use the following commutative diagram, with some entries left blank   on purpose
for you to fill-in, to deduce that  indeed we have that vanishing:
\[\xymatrix{
H^?(Y, \sq_{\text{prim}}) \ar[r]^{d_?}  \ar[d]^{\eta^?} & H^? (Y, ?) \ar[d]^{\eta^?} \\
H^?(Y, ?) \ar[r]^{d_?}   & H^? (Y, ?).
}
\]
(Hint: the right  power of $\eta$ kills a primitive in degree $q$, but is injective in degree $q-1$.)
Remark: the refined decomposition (\ref{eq1a}) is proved in a similar way by replacing the spectral sequence
above with the analogous  one for ${\rm Hom}(\sq[-q], Rf_* \rat_X)$: first you prove it is $E_2$-degenerate; then
you lift the identity $\sq \to \sq$ to a map in $\mbox{Hom}(\sq [-q], Rf_* \rat_X)$ inducing the identity on $\sq$;
see \ci{shockwave}.
}
\end{exe}

{\bf Heuristics for $E_2$-degeneration and for semisimplicity of the $R^qf_* \rat_X$  via weights.} (What follows should be taken with a big grain of salt.)
 It seems that Deligne guessed at $E_2$-degeneration  by looking at the same situation over the algebraic closure of a finite field by considerations (``the yoga of weights'' \cites{ho1,poids})  of  the size (weight) of the eigenvalues of   action of 
 Frobenius  on the entries $E_r^{pq}$: they should have weight something analogous to $\exp{(p+q)}$ (we are using the exponential function as an analogy only, one needs to say more, but we shan't) so the Frobenius-compatible differentials must be zero.
There is a similar heuristics for the Deligne's theorem to the effect that the $\sq$ are semisimple:
if $0 \to M \to \sq \to N \to 0$ is a short exact sequence,  then Frobenius acts  on  $Ext^1(N,M)$ with weight $\exp{(1)}$; take 
$M\subseteq \sq$ to be the maximal semisimple subobject; then the corresponding extension  is invariant under Frobenius and has weight zero $(1=\exp{(0)})$;
it follows that the extension splits and  
the biggest semisimple  in $\sq$ splits off: $\sq \cong M \oplus N$; if the resulting quotient $N$ were non trivial,  then it would contain a non trivial simple that then, by the splitting, would enlarge
the biggest semisimple $M$ in $\sq$; contradiction. This kind of heuristics is now firmly based in deep theorems
by Deligne and others \cites{weil2, bbd} for varieties finite fields and their algebraic closure, and by M. Saito
\ci{samhm} in the context of mixed Hodge modules
over complex algebraic varieties.

\begin{exe}\la{loc1}{\rm 
({\bf  Rank one locally constant sheaves}) 
Take $[0,1] \times \rat$ and identify the two ends by multiplication by $-1$. Interpret this as a 
rank one  locally constant sheaf on $S^1$ that is not constant. Do the same, but multiply by $2$.
Do the same, but first replace $\rat$ with $\overline{\rat}$ and multiply by a root of unity.
Show that the tensor product operation $(L,M) \to L\otimes M$ induces the structure of an abelian
group on the set of isomorphisms classes of rank one locally constant sheaves on a  variety $Y$. Determine the torsion elements of this group when you replace $\rat$ with
$\overline{\rat}$. Show that  if we replace $\rat$ with $\comp$ (this is the
character variety for rank one complex representations) we obtain the structure of a  complex Lie group.
}
\end{exe}

\begin{exe}\la{hy6}
{\rm 
({\bf Locally constant sheaves and representations of the fundamental group})
 A  locally constant sheaf (a.k.a. local system)  $L$ on $Y$ gives rise to a representation $\rho_L: \pi_1 (Y, y)
\to GL(L_y)$: pick a loop $\gamma (t)$ at $y$ and use   local trivializations of  $L$ along the loop to  move
vectors in $L_y$ along  $L_{\gamma (t)}$, back to $L_y$. 
}
\end{exe}

\begin{exe}\la{ow}{\rm
({\bf Representations of the fundamental group  and locally constant sheaves})  Given a representation $\rho: \pi_1 (Y, y) \to GL (V)$ into
a finite dimensional vector space, consider a universal cover $(\w{Y}, \w{y}) \to (Y,y),$
build  the quotient space $(V  \times \w{Y})/\pi_1(Y,y)$, take the natural map (projection)  to $Y$
and take the sheaf of its local sections. Show that this is a locally constant sheaf whose associated 
representation is $\rho.$
}
\end{exe}

\begin{exe}{\rm({\bf Zeroth cohomology of a local system})
Let $X$ be a connected space. Let $L$ be a local system on $X$, and write for $M$ for the associated $\pi_1(X)$ representation. Show that
\[ H^0(X; L) = M^{\pi_1(X)}, \]
where the right hand side is the fixed part of $M$ under the $\pi_1(X)$-action.
}
\end{exe}

\begin{exe}{\rm ({\bf Cohomology of local systems on a circle})
Fix an orientation of $S^1$ and the   generator $T\in \pi_1(S^1)$ that comes with it. Let $L$ be a local system on $S^1$ with associated monodromy representation $M$. Show that
\[ H^0(S^1, L) = \mathrm{ker}((T-id)\colon M \to M), \quad 
H^1(S^1, L) = \mathrm{coker}((T-id)\colon M\to M), \]
and that $H^{>1}(S^1, L) = 0$. 

\n
(Hint: one way to proceed is to use Cech cohomology. Alternatively, embed $S^1$ as the boundary of a disk and use relative cohomology (or dualize and use compactly supported cohomology, where the orientation is easier to get a handle on).
}
\end{exe}

\begin{exe} {\rm ({\bf Fiber bundles over a circle: the Wang sequence})
This is an extension of the previous exercise. Let $f\colon E\to S^1$ be a locally trivial fibration with fibre $F$ and monodromy isomorphism $T\colon F\to F$. Show that the Leray spectral sequence gives rise to a short exact sequence
\[ 0 \to H^1(S^1, R^{q-1}f_*\mathbb{Q}) \to H^q(E, \mathbb{Q}) \to H^0(S^1, R^qf_*\mathbb{Q}) \to 0.\]
Use the previous exercise to put these together into a long exact sequence
\[ \ldots \to  H^q(E, \mathbb{Q}) \lorw H^q(F, \mathbb{Q}) \lorw H^q(F, \mathbb{Q}) \to H^{q+1}(E) \to \ldots \]
where the middle map $H^q(F; \mathbb{Q}) \to H^q(F;\mathbb{Q})$ is given by $T^*-id$. Make a connection with the theory of nearby cycles.
}
\end{exe}

\begin{exe}\la{098}{\rm 
({\bf The abelian category $Loc (Y)$})
Show that  the abelian category 
$Loc\,(Y)$ of locally constant sheaves of finite rank  on $Y$ is equivalent to the abelian category of finite dimensional $\pi_1(Y,y)$-representations. Show that both categories are noetherian (acc ok!), artinian (dcc ok!) and have a duality anti-self-equivalence.
}
\end{exe}

Exercise~\ref{vbt} below   is in striking contrast with the category $Loc$, but also with the one  of perverse sheaves, which admits, by its very definition, the anti-self-equivalence given by Verdier duality.

\begin{exe}\la{vbt}{\rm 
({\bf The abelian category $Sh_c (Y)$ is not artinian.})
Show that in the presence of such an anti-self-equivalence, noetherian is equivalent to artinian. Observe that the category
$Sh_c(Y)$ whose objects are the constructible sheaves (i.e. there is a finite partition of $Y=\coprod Y_i$ into locally closed subvarieties
to which the sheaf restricts to a locally constant one (always assumed to be  of finite rank!) is  abelian and noetherian, but  it is not artinian. Deduce that $Sh_c(Y)$
does not admit an anti-self-equivalence.  Give an explicit example of the failure of dcc in $Sh_c(Y)$.
Prove that $Sh_c(Y)$ is artinian IFF $\dim Y =0.$
}
\end{exe}

\begin{exe}{\rm
({\bf Cyclic coverings}) Show that the  direct image sheaf
sheaf $R^0f_*\rat$ for the map $S^1 \to S^1$, $t \to t^n$ is a  semisimple locally constant sheaf of rank $n$;
find its simple summands (one of them is the constant sheaf $\rat_{S^1}$  and the resulting splitting
is given by the trace map). Do the same for locally constant sheaves with $\overline{\rat}$ coefficients.}
\end{exe}

\begin{exe}{\rm
({\bf Indecomposable non simple}) The rank two locally constant sheaf on $S^1$ given by the non trivial unipotent 
$2\times 2$ Jordan block is indecomposable, not simple, not semisimple. Make a connection
between this locally constant sheaf and the Picard-Lefschetz formula for the 
degeneration of a curve of genus one to a nodal curve.
}
\end{exe}

{\bf Amusing monodromy dichotomy.} There is an important and amusing dichotomy concerning local systems in algebraic geometry (which we state informally):
the global local systems arising in complex algebraic  are semisimple (i.e. completely reducible;  related to  Zariski closure the image of the fundamental group
in the general linear group being reductive); the restriction of these local systems to small punctured disks with centers at infinity
(degenerations),  are quasi-unipotent, i.e. unipotent after
taking a finite cyclic covering if necessary.  This local quasi-unipotency is in some sense the opposite of the global complete reducibility.

{\bf Quick review of Deligne's 1972 and 1974 theory of mixed Hodge structures \cites{ho1,ho2,ho3}; see also \ci{durfee}.}
Deligne discovered the existence of a remarkable structure, a mixed Hodge structure,  on the singular cohomology of  a complex algebraic variety $X$:
there is an increasing filtration $W_kH^*(X,\rat)$ and a decreasing filtration $F^p H^*(X,\comp)$  (with conjugate filtration denoted by $\overline{F}$) such that   the graded quotients $Gr^W_k H^*(X,\comp) = \oplus_{p+q=k}H^{pq}_k$, where the splitting 
is induced by the (conjugate and opposite)  filtrations $F, \overline{F}$; i.e. $(W, F, \ov{F})$  induce  pure Hodge structure of weight $k$
on $Gr^W_k$. This structure is  canonical and functorial for maps of complex algebraic varieties.
Important: one has that a map of mixed Hodge structures $f: A\to B$ is automatically {\em strict}, i.e.
if $f(a) \in W_kB,$ then there is $a'\in W_kA$ with $f(a')=f(a)$.   Kernels, images, cokernels
of pull-back maps in cohomology inherit such a structure.
If $X$ is a projective manifold, we get the known Hodge $(p,q)$-decomposition: $H^i(X,\comp) = \oplus_{p+q =i}H^{pq}(X)$. It is important to take note that for each fixed $i$ we have 
  $H^i(X,\comp) = \oplus_k Gr^W_k H^i(X,\comp) = \oplus_k \oplus_{p+q=k} H^{pq}_k(X)$ which may admit
  several non zero $k$ summands for $k\neq i$.  In this case, we say that the mixed Hodge structure  is mixed. This happens
for the projective nodal cubic: $H^1= H_0^{0,0}$, and for the punctured affine line $H^1=H_2^{1,1}$.
Here are some ``inequalities'' for the weight filtration: $Gr^W_kH^d=0$ for $k\notin [0,2d]$; if $X$ is complete, then $Gr^W_{k>d}H^d=0$; 
if $X$ is nonsingular, then $Gr^W_{k<d}H^d=0$ and $W_dH^d$ is the image of the restriction map
from any nonsingular completion (open immersion in proper nonsingular); if $X\to Y$ is surjective and $X$ is complete nonsingular, then the kernel of the pull-back
to $H^d(X)$ is $W_{d-1}H^d(Y).$ 

\begin{exe}\la{rty60} {\rm ({\bf Amazing weights})
Let $Z \to U \to X$ be a closed immersion with  $Z$ complete  followed by an open dense  immersion  into a complete nonsingular variety.
Using some of the weight inequalities listed above, together with strictness, to show that
the images of $H^*(X,\rat)$ and $H^*(U,\rat)$ into $H^*(Z,\rat)$ coincide. 
Build a counterexample in complex geometry (Hopf!). Build a counterexample in real algebraic geometry (circle, bi-punctured sphere, sphere).
}
\end{exe}

The reader is invited to produce an explicit example of a projective normal surface having mixed singular cohomology.
Morally speaking, as soon as you leave the world of projective manifolds and dive into the one of projective varieties, ``mixedness'' is the norm.

For an explicit example of  a proper map with no cohomological decomposition
analogous to (\ref{eq1}),  see Exercise~\ref{bgt5}.
We can produce many by pure-thought
using Deligne's theory of mixed Hodge structures. Here is how.
\begin{exe}{\rm 
({\bf In general, there is no decomposition $Rf_* \rat_X\cong \oplus R^qf_*\rat_X [-q]$})
Pick a normal projective variety $Y$
whose singular cohomology is a non pure  mixed Hodge structure.  Resolve the singularities
$f: X \to Y.$  Use  Zariski main theorem to show that $R^0f_* \rat_X = \rat_Y.$  Show that, in view of the
 the mixed-not-pure assumption, the map of mixed Hodge structures $f^*$
is not injective. Deduce that $\rat_Y$ is not a direct summand of $Rf_*\rat_X$ and that, in particular,
there is no decomposition $Rf_* \rat_X \cong \oplus R^qf_* \rat_X [-q]$ in this case.
(In some sense, the absence of such a  decomposition is the norm for proper maps of varieties.)
}
\end{exe}

\begin{exe}\la{fcv}
{\rm ({\bf The affine cone $Y$ over a projective manifold $V$}) 
Let $V^d\subseteq \bb{P}$ be an embedded projective manifold  of dimension $d$ and let $Y^{d+1} \subseteq
\bb{A}$ be its affine cone with vertex $o.$ Let $j: U:= Y\setminus \{o\} \to Y$ be the open embedding.
Show that $U$ is the $\comp^*$-bundle over $V$ of the dual to the hyperplane line bundle
for the given embedding $V \subseteq \bb{P}$.  Determine $H^*(U,\rat)$. 
Answer: for every for $0 \leq q  \leq d$, $H^{q}(U) = H^q_{Prim}(V)$  and $H^{1+d +q}(U) = H^{d-q}_{Prim}(V).$
Show that  $R^0j_*\rat_U=\rat_Y$  and that, for $q>0$, $R^{q}j_*\rat _U$ is skyscraper at $o$ with stalk
$H^q(U,\rat)$. Compute $H^q_c(Y,\rat)$. Give a necessary and sufficient condition on the cohomology of $V$ that ensures
that $Y$  satisfies Poincar\'e duality $H^q(Y,\rat)\cong
H^{2d+2-q}_c(Y,\rat)$. Observe that if $V$ is a curve this condition boils down to it having genus zero.
Remark: once  you know about a bit about Verdier duality, this exercise tells you that the complex $\rat_Y[\dim{Y}]$ is Verdier self dual
iff $V$ meets the condition you have identified above; in particular, it does not if $V$ is a curve of positive genus.
}
\end{exe}

\begin{fact}\la{fcvo}
{\rm ({\bf  $\m{IC}_Y$, $Y$ a cone over a projective manifold $V$}) Let things be as in Exercise~\ref{fcv}.
By adopting the definition of the intersection complex
as an iterated push-forward followed by truncations, as originally given by Goresky-MacPherson,
the intersection complex of $Y$ is defined to be  $\m{IC}_Y:= \td{d} Rj_* \rat_U$,
where we are truncating the image direct complex $Rj_* \rat_U$  in the following way: keep the same entries up to degree $d-1$, replace the $d$-th entry
by the kernel of the differential exiting it and setting the remaining entries to be zero; the resulting cohomology sheaves
are the same as the ones for $Rj_* \rat_U$ up to degree $d$ included, and they are zero afterwards.
 More precisely, the   cohomology sheaves  of this complex are as follows:
$\m{H}^q=0$ for $q \notin [0, d]$, 
$\m{H}^0 = \rat_Y$,   and for $1 \leq q \leq d$, $\m{H}^i$ is skyscaper at $o$ with stalk $H^q_{Prim}(V,\rat)$.
Here is a justification for this definition: while $\rat [\dim{Y}]$ usually fails to be  Verdier self-dual, one
can  verify directly that the intersection complex $IC_Y:= \m{IC}_Y [\dim{Y}]$ is Verdier self-dual. 
In general, if we were to truncate at any other
spot, then we would not get this self-duality behavior (unless we truncate at minus 1 and   get zero).
Note that the knowledge of the cohomology sheaves of a complex, e.g. $\m{IC}_Y$, is important information, but
it does not characterize the complex up to isomorphism.
}
\end{fact}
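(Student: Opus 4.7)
The plan is to work in two stages: first compute the cohomology sheaves of $\m{IC}_Y = \td{d} Rj_* \rat_U$, then establish the Verdier self-duality of $IC_Y = \m{IC}_Y[d+1]$.

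First I would compute $\m{H}^q(Rj_*\rat_U)$. In degree zero, $j_*\rat_U = \rat_Y$ because $U$ is dense and $Y$ is connected. For $q \geq 1$, the sheaf $R^q j_*\rat_U$ vanishes on $U$ and is therefore a skyscraper at $o$; its stalk is $\lim_{W \ni o} H^q(W \cap U, \rat)$. The $\comp^*$-action by scalar multiplication on the cone retracts every small punctured neighbourhood $W \cap U$ onto $U$ itself, so this stalk is simply $H^q(U,\rat)$. I would then invoke the calculation of Exercise~\ref{fcv}: the Gysin sequence for the $\comp^*$-bundle $U \to V$ has Euler class equal (up to sign) to the hyperplane class $\eta$, and hard Lefschetz on $V$ yields $\ke(\cup\eta\colon H^{q-1}(V) \to H^{q+1}(V)) = 0$ for $q \leq d$, while the primitive Lefschetz decomposition gives $\coke(\cup\eta\colon H^{q-2}(V) \to H^q(V)) = H^q_{\text{Prim}}(V)$. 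Consequently $H^q(U,\rat) = H^q_{\text{Prim}}(V)$ for $0 \leq q \leq d$. Applying $\td{d}$ preserves the cohomology sheaves in degrees $\leq d$ and kills the higher ones, yielding the description stated in the fact.

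For the Verdier self-duality of $IC_Y$, the conceptual route is to recognise $IC_Y = \td{-1}(Rj_*\rat_U[d+1])$ as Deligne's inductive construction of the middle-perversity extension across the single deeper stratum $\{o\}$, and appeal to the Goresky-MacPherson characterisation, which guarantees self-duality at middle perversity. A more direct verification would check the support condition (immediate: for $k > -(d+1)$, the sheaf $\m{H}^k(IC_Y)$ is either zero or a skyscraper at $o$, with $\dim\{o\} = 0 \leq -k - 1$) alongside the cosupport condition, obtained from $\bb{D}(Rj_*\rat_U) = j_!\rat_U[2(d+1)]$ and computing the stalks at $o$ of $j_!\rat_U[d+1]$, expressible via the compactly supported cohomology $H^*_c(Y,\rat)$ from Exercise~\ref{fcv}.

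The main obstacle is this self-duality step; it is precisely here that the truncation level $d$ is forced. The hands-on verification reduces to a symmetry between $\m{H}^k(Rj_*\rat_U)$ for $k \leq d$ and $\m{H}^k(j_!\rat_U[2(d+1)])$ for $k \geq d + 2$, pivoting on Poincar\'e duality between $H^q_{\text{Prim}}(V)$ and $H^{2d-q}_{\text{Prim}}(V)$. This matching across the middle degree is exactly the reason the fact emphasises that truncating at any other spot destroys self-duality.
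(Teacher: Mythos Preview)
The paper states this as a Fact without a separate proof, so there is no line-by-line comparison to make; your computation of the cohomology sheaves is exactly what the paper intends (using the $\comp^*$-retraction and the result of Exercise~\ref{fcv}), and your conceptual route to self-duality via the Deligne/Goresky--MacPherson characterisation of the intermediate extension is the standard and correct one.

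However, your sketch of the \emph{direct} verification has a real error worth flagging. You write $\bb{D}(Rj_*\rat_U) = j_!\rat_U[2(d+1)]$, which is correct, and then want to deduce the cosupport condition for $IC_Y = \td{d}(Rj_*\rat_U)[d+1]$ from this. The problem is that Verdier duality does \emph{not} commute with the standard truncation: the identity $\bb{D}(\td{k}C) = \tu{-k}(\bb{D}C)$ holds for the perverse truncation, not the standard one. So knowing $\bb{D}(Rj_*\rat_U)$ does not directly give you $\bb{D}(IC_Y)$. Your last paragraph compounds this by suggesting that a matching of cohomology sheaves between $\td{d}Rj_*\rat_U$ and a truncation of $j_!\rat_U[2(d+1)]$ would establish the isomorphism --- but as the Fact itself warns in its final sentence, cohomology sheaves do not determine a complex up to isomorphism.

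The correct hands-on route is to verify the cosupport condition $\m{H}^k(i_o^! IC_Y) = 0$ for $k \leq 0$ via the attaching triangle $i_{o*}i_o^! IC_Y \to IC_Y \to Rj_*\rat_U[d+1]$, which identifies $i_{o*}i_o^! IC_Y$ with $(\tu{d+1}Rj_*\rat_U)[d]$, concentrated in degrees $\geq 1$ by your stalk computation. Once $IC_Y$ is known to satisfy the support, cosupport, and restriction-to-$U$ conditions, self-duality follows from the uniqueness clause: $\bb{D}(IC_Y)$ satisfies the same three conditions (using $i_o^*\bb{D} = \bb{D}i_o^!$ and $i_o^!\bb{D} = \bb{D}i_o^*$), hence is isomorphic to $IC_Y$.
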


\begin{exe}\la{bgt5} {\rm ({\bf Example of no decomposition  $Rf_* \rat_X\cong \oplus R^qf_*\rat_X [-q]$})
Let things be as in Exercise~\ref{fcv} and assume that $V$ is a curve of positive genus, so that $Y$ is a
surface.
Let $f: X \to Y$ be the  resolution  obtained by blowing up the vertex $o \in Y.$
Use the failure of the self-duality of $\rat_Y[2]$ to deduce that $\rat_Y$ is not a direct summand
of $Rf_*\rat_{X}$. Deduce that $Rf_*\rat_X \not\simeq \oplus R^qf_*\rat_X [-q]$.
(The reader is invited to check  out arXiv:math/0504554, \S3.1: it is an explicit computation dealing with this example showing that
as you try split $\rat_Y$ off $Rf_* \rat_{X}$, you meet an obstruction;  instead, you end up
splitting $\m{IC}_Y$ off $Rf_* \rat_{X}$, provided you have defined $\m{IC}_Y$   as the truncated push-forward as above.)
}
\end{exe}

\begin{fact}\la{509}
{\rm
({\bf  Intersection complexes on curves})
Let $Y^o$ be a nonsingular curve and $L$ be a locally constant sheaf on it. Let $j:Y^o\to Y$ be an open immersion
into another curve (e.g. a compactification). Then $IC_Y (L) = j_*L[1]$ (definition of $IC$ via push-forward/truncation). Note that
if $y \in Y$ is a  nonsingular point, then the stalk  $(j_*L)_y$ is given by the local monodromy invariants
of $L$ around a small loop about $y.$  The complex $Rj_*L[1]$ may fail to be Verdier self-dual, whereas
its truncation $\td{-1} Rj_* L[1]  = j_* L[1] = IC_Y(L)$ is Verdier self-dual.
Note that we have a factorization $Rj_! L[1] \to IC_Y(L) \to Rj_* L[1].$ This is not an ``accident'': see  the end of \S\ref{intco1}.
}
\end{fact}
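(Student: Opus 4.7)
The plan is to verify the four assertions in order. Steps 1 (identification) and 2 (stalk formula) are direct unpackings of the definitions, while self-duality (Step 3) is the real content, and the factorization (Step 4) then falls out of adjunction.

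For Step 1, I apply the iterated-truncation recipe recalled in Fact~\ref{fcvo}: in general $\m{IC}_Y(L) = \td{\dim Y - 1} Rj_* L$. For $\dim Y = 1$ this is $\td{0} Rj_* L$, whose only surviving cohomology sheaf is $R^0 j_* L = j_* L$ in degree $0$, with higher cohomology truncated away. Shifting by $\dim Y = 1$ yields $IC_Y(L) = j_* L[1]$. For Step 2, at a nonsingular point $y \in Y \setminus Y^o$ choose a small disk neighborhood $\Delta$; then $\Delta \cap Y^o = \Delta \setminus \{y\} \simeq S^1$, and the exercise on cohomology of local systems on a circle identifies the stalk $(j_* L)_y = H^0(\Delta^*, L)$ with $\ker(T_y - \mathrm{id})$, the local monodromy invariants.

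Step 3, self-duality, is the crux. I would dualize the open/closed attaching triangle: let $i: Z \hookrightarrow Y$ be the finite set of boundary points, and $K := j_*L[1]$. The standard triangle
\[ j_! L[1] \lorw K \lorw i_* (i^* K) \stackrel{+1}{\lorw} \]
has $i^* K$ the skyscraper with stalk $L_y^{T_y}[1]$ at $y \in Z$. Applying $\mathbb{D}$, using the open-immersion identity $\mathbb{D} j_! \cong Rj_* \mathbb{D}$, together with $\mathbb{D}(L[1]) = L^\vee[1]$ on the smooth curve $Y^o$ and $\mathbb{D}(i_* V[1]) = i_*(V^\vee[-1])$ (since $i^! \omega_Y = \rat$ at a smooth curve point), produces a triangle
\[ \mathbb{D}(K) \lorw Rj_* L^\vee[1] \lorw i_* \bigoplus_y (L_y^{T_y})^\vee \stackrel{+1}{\lorw}. \]
The key calculation is that stalkwise the map on the right agrees with the $H^1$-projection $Rj_*L^\vee[1] \to R^1 j_* L^\vee$, once one uses the perfect pairing $(L_y^{T_y})^\vee \cong (L^\vee_y)_{T_y^\vee}$ between invariants of $L$ and coinvariants of $L^\vee$. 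This identifies the triangle with the canonical truncation triangle for $L^\vee$, forcing $\mathbb{D}(K) \cong j_* L^\vee[1]$; passing to dual coefficients, $j_*L[1]$ is Verdier self-dual. The same calculation shows $Rj_*L[1]$ is not self-dual unless every cokernel $\mathrm{coker}(T_y - \mathrm{id})$ vanishes.

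Step 4 follows by adjunction. The canonical morphism $j_! L \to Rj_* L$ shifted by $[1]$ gives $j_!L[1] \to Rj_*L[1]$; since $j_!L[1]$ is concentrated in degree $-1$, it lies in $\td{-1}$ and the map lifts through the truncation $\td{-1}(Rj_*L[1]) = j_*L[1] = IC_Y(L)$, producing the factorization $Rj_!L[1] \to IC_Y(L) \to Rj_*L[1]$ (with $Rj_! = j_!$ since $j_!$ is already exact for the open immersion). The main obstacle is Step 3: managing the signs and shifts under Verdier duality requires some care, but everything rests on the elementary perfect pairing between monodromy invariants and coinvariants on a punctured disk, a local computation that both powers the self-duality of $IC$ and pinpoints why the naive candidates $Rj_*L[1]$ and $j_!L[1]$ fail to be self-dual.
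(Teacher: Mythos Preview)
The paper states this as a \emph{Fact} and does not supply a proof beyond the parenthetical pointer to the push-forward/truncation definition, so there is no argument in the paper to compare against; your write-up is a genuine proof where the paper offers none. Steps~1, 2, and 4 are clean and correct.

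Step~3 is the right idea but has a soft spot. After dualizing the attaching triangle you obtain
\[
\mathbb{D}K \lorw Rj_* L^\vee[1] \lorw i_*\bigl((L^{T})^\vee\bigr) \stackrel{+1}{\lorw},
\]
and you want to identify this with the standard truncation triangle for $Rj_*L^\vee[1]$. Matching the second and third objects (via the invariants/coinvariants pairing) is not by itself enough to conclude that the first objects match: distinguished triangles are not determined by two of their vertices. What saves you is that any map from $Rj_*L^\vee[1]$ to a skyscraper in degree~$0$ automatically factors through $\tu{0}Rj_*L^\vee[1]=R^1j_*L^\vee$, and then it suffices to check that the induced map $R^1j_*L^\vee \to i_*(L^T)^\vee$ is an isomorphism stalkwise---this is exactly the pairing you invoke, but you should say explicitly that this is what forces the cone to be $\td{-1}Rj_*L^\vee[1]=j_*L^\vee[1]$. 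Also note that your computation of $\mathbb{D}(i_*V[1])$ and of the punctured neighborhood tacitly assumes the boundary points are nonsingular on $Y$; at a singular point the local picture is a union of punctured disks (one per branch) and the same argument goes through after summing over branches, but it is worth flagging. Finally, what you actually prove is $\mathbb{D}(IC_Y(L))\cong IC_Y(L^\vee)$; the phrase ``self-dual'' in the Fact should be read in this sense (or under the hypothesis $L\cong L^\vee$), which you note correctly at the end.
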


\begin{exe}\la{blowups}
{\rm  ({\bf Blow-ups}) Compute the direct image sheaves $R^qf_* \rat$ for the blowing-up
of $\comp^m \subseteq \comp^n$(start with $m=0$; observe that there is a product
decomposition of the situation that allows you to reduce to the case $m=0$). Same question for the composition of the  blow up of $\comp^1 \subseteq \comp^3$,
followed by the blowing up of a positive dimensional fiber of the first blow up. Observe  that
in all cases,
one gets an the decomposition $Rf_* \rat \cong \oplus R^qf_*\rat[-q].$
 Guess  the shape of the decomposition theorem in both cases.
}
\end{exe}

\begin{exe}\la{g0g0}{\rm
({\bf Examples of the decomposition theorem})
Guess  the exact form of the
cohomological  and ``derived'' decomposition theorem in the following cases:
1) the normalization of a cubic curve  with a node and of a cubic curve with a cusp;
 2) the blowing up of a smooth
subvariety of an algebraic manifold; 3) compositions of various iterations of blowing ups of nonsingular varieties along   smooth centers; 
4) a  projection $F \times Y \to Y$; 5) the blowing up of the vertex
of the affine cone over the nonsingular quadric in $\pn{3}$; 6) same but  for the 
projective cone; 7) blow up the same affine and projective cones but along 
a plane through the vertex of the cone; 8) the blowing up of the vertex of the affine/projective  cone over  an embedded projective manifold.}
\end{exe}

\begin{exe}\la{lefpe} {\rm ({\bf DT for Lefschetz pencils})
Guess  the shape of the dt for a Lefschetz pencil  $f: \w{X} \to \pn{1}$ on a nonsingular projective surface $X$.
Word out explicitly   the invariant cycle theorems in this case. Do the same for
a nonsingular projective manifold. When do we get skyscraper contributions?
}
\end{exe}

\begin{exe}\la{kunneth} {\rm ({\bf K\"unneth for the derived image complex})
One needs a little bit of working experience with the derived category to carry out what below. But try anyway.
Let $f: X:=Y\times F \to Y$.
A cohomology class $a_q \in H^q(X,\rat)$ is the same thing as a map in the derived category
$a_q:\rat_X \to \rat_X[q]$.  By pushing forward via $Rf_*$, by observing that $Rf_* f^* \rat_Y = Rf_*\rat_X$,
 by pre-composing with the adjunction map $\rat_Y \to Rf_* \rat_X$, we get a map $a_q: \rat_Y \to Rf_* \rat_X[q].$
 Take $a_q$ to be of the form $pr_F^*\alpha_q.$ Obtain a map  $\alpha_q \underline{H}^q(F) \to Rf_*\rat_X [q]$.
 Shift to get $\alpha_q:  \underline{H}^q(F)[-q] \to Rf_*\rat_X.$
 Show that the map induces the ``identity'' on the $q$-th direct image sheaf and zero on the other direct image sheaves.
 Deduce that $\sum_q \alpha_q:   \oplus_q \underline{H}^q(F)[-q] \to Rf_*\rat_X$ is an isomorphism in the derived category
 inducing the ``identity'' on the cohomology sheaves. Observe that you did not make any choice in what above.
}
\end{exe}

\begin{exe}{\rm ({\bf Deligne theorem as a special case of the decomposition theorem}) Keeping in mind that if 
$S^o=S$,  then $\m{IC}_S(L) = L,$ recover the Deligne  theorem  from the decomposition theorem.}
\end{exe}

\section{Lecture 2: The category of perverse sheaves \texorpdfstring{$P(Y)$}{P(Y)}}\la{oiu}

{\bf Summary of Lecture 2.}  {\em The constructible derived category. Definition of perverse sheaves. Artin vanishing and its relation to a proof of the Lefschetz hyperplane theorem for perverse sheaves. The perverse t-structure (really, only the perverse cohomology functors!). Beilinson's and Nori's equivalence theorems. Several  equivalent definitions of intersection complexes.}

\subsection{Three Why? And a brief history of perverse sheaves}\la{0i4}

{\bf Why intersection cohomology?}
Let us look at  (\ref{3edd}) for $X$ and $Y$  nonsingular:
\[H^*(X,\rat) \cong  \oplus_{q,\pev_q} IH^{*-q}(S,L),\] i.e. the  l.h.s. is ordinary cohomology,
but the r.h.s. is not any kind of ordinary cohomology on $Y$: we need intersection cohomology to state the decomposition
theorem,
even when $X$ and $Y$ are nonsingular. 
The intersection cohomology groups of a projective variety enjoy  a  battery of wonderful properties
(Poincar\'e-Hodge-Lefschetz package). 
In some sense, intersection cohomology nicely replaces singular cohomology on singular varieties, but with a funny twist: 
singular cohomology is functorial, but has no Poincar\'e duality; intersection cohomology
has Poincar\'e duality, but is not functorial!

{\bf Why the constructible derived category?} 
The cohomological Deligne theorem  (\ref{eq1}) for smooth projective maps is a purely cohomological statement
and it can be  proved via purely cohomological methods (hard Lefschetz + Leray spectral sequence).
The cohomological decomposition theorem  (\ref{3edd}) is also a cohomological statement. However, there is no known proof
of this statement that does not make use of the formalism of the  middle perversity t-structure present in  the constructible
derived category:
one  proves the derived version (\ref{eq2})  and then  deduces the cohomological one (\ref{3edd}) by taking cohomology.
Actually, the definition of perverse sheaves does not make sense if we take the whole derived category, we need to take complexes with
cohomology sheaves supported at closed subvarieties (not just classically closed subsets). We thus restrict to an agreeable, yet
flexible, class of complexes: the ``constructible complexes''.

{\bf Why perverse sheaves?} 
Intersection complexes, i.e. the objects appearing
on both sides of the decomposition theorem  (\ref{eq2}) are very special perverse sheaves. In fact, in a precise way, they form the building blocks
of the category of perverse sheaves: every perverse sheaf is an iterated extension of a collection of intersection complexes.
Perverse sheaves satisfy their own set of beautiful properties: Artin vanishing theorem, Lefschetz hyperplane theorem,
stability via duality, stability via vanishing and nearby cycle functors. 
As mentioned above, the known proofs of the decomposition theorem use the machinery of perverse sheaves.

{\bf A brief history of perverse  sheaves.} 

Intersection complexes were invented by Goresky-MacPherson as a tool to systematize, strengthen and widen the scope of  their own intersection cohomology theory. For example, their original geometric proof of Poincar\'e duality can be replaced by   the  
self-duality property of the intersection complex. See also S. Kleiman's very entertaining  survey \ci{kle}.

The conditions leading to the definition of perverse sheaves appeared first  in connection with  the Riemann-Hilbert correspondence
established  by Kashiwara and by  Mekbouth: their result is an equivalence of categories
between the constructible derived category (which we have been procrastinating to define) and the derived category of regular holonomic D-modules (which
we shall not define);
the standard t-structure, given by the standard truncations met in Exercise~\ref{fcvo}, of these two categories do not correspond to each other under the Riemann-Hilbert equivalence; the conditions leading to the 
``conditions of support'' defining  of perverse sheaves 
are the (non trivial) translation in the constructible derived category  of the conditions  on the D-module side
stating that a complex of D-modules has trivial cohomology D-modules in positive degree. It is a seemingly unrelated, yet 
remarkable  and beautiful fact, that the conditions of support so-obtained are precisely what makes the Artin vanishing Theorem~\ref{artinio}  work 
on an affine variety.

As mentioned above, Gelfand-MacPherson conjectured the decomposition theorem for $Rf_* \m{IC}_X.$ 
Meanwhile Deligne had developed a theory of pure complexes for varieties defined over finite fields and established
the invariance of purity under push-forward by proper maps. 
Gabber proved that the intersection complex
of a pure local system in that context, is pure.  The four authors of \ci{bbd} introduced and developed systematically the
basis for the theory of t-structures, especially with respect to the  middle   perversity. They then proved that
the notions of purity  and perverse t-structure are compatible: a pure complex splits over the algebraic closure of the
finite field as prescribed by the r.h.s. of (\ref{eq2}).  The decomposition theorem over the algebraic closure of a finite field
follows when considering the purity result for the proper direct image mentioned above. The whole Ch. 6 in \ci{bbd},
aptly named ``De $\bb{F}$ \`a  $\comp$'',
is devoted to explaining how these kind of results over the algebraic closure of a finite field
yield results over the field of complex numbers. This established the original proof of the decomposition
theorem
over the complex numbers for semisimple complexes of geometric origin (see \ci{bbd}, 6.2.4, 6.2.5), such as
$\m{IC}_X.$

M. Saito has developed in \ci{samhm} the 
theory of mixed Hodge modules which yields the desired decomposition theorem when $M$ underlies a variation of polarizable pure Hodge
structures.

M.A. de Cataldo and L. Migliorini have given a proof based on classical Hodge theory of the decomposition theorem when
$M$ is constant \ci{decmightam}.

Finally, the decomposition theorem stated in  (\ref{eq2}) is the most general statement currently available over the complex
numbers and is due to  work of C. Sabbah \ci{sabbah} and T. Mochizuki \ci{mochizuki} (where this is done in the essential
 case of projective maps of quasi projective manifolds; it is possible to extend it  to proper maps of algebraic
 varieties).  The methods (tame harmonic bundles,
D-modules)
 are quite different from the ones discussed in these lectures.

\subsection{The constructible derived category \texorpdfstring{$D(Y)$}{D(Y)}}\la{cdcat}

The  decomposition theorem isomorphisms (\ref{eq2}) take place in  the ``constructible derived category'' $D(Y)$. It is probably a good time to try and give an idea
what this category is.

{\bf Constructible sheaf.} A sheaf $F$ on $Y$ is {\em constructible} if there is
 a finite disjoint union decomposition $Y=
\coprod_a S_a$ into locally closed subvarieties 
such that the restriction  $F_{|S_a}$ are locally constant  sheaves of finite rank.  This is a good time to look at Exercise~\ref{cantor}.

{\bf Constructible complex.} A complex $C$  of sheaves   of rational vector spaces on $Y$  is said to be   {\em constructible}
if it is bounded (all but finitely many
of its cohomology sheaves are zero) and  its cohomology sheaves are constructible sheaves. See the most-important
Fact~\ref{moimp}.


 
 {\bf Constructible derived category.}
The definition of $D(Y)$ is kind of a mouthful: {\em it is the full subcategory of the derived category  $D(Sh(Y,\rat))$ of the category of sheaves of rational vector spaces
whose objects are the constructible complexes.}

It usually  takes time to absorb these notions and to absorb the apparatus
it gives rise to. We take a different approach and we try to isolate some
of the aspects of the theory that are more relevant to the decomposition theorem. We do
not dwell on  technical details.

{\bf  Cohomology.}
Of course, the first functors to consider are cohomology and cohomology with compact supports $H^i(Y,-), H^i_c(Y,-): D(Y) \to D(point)$. They can be seen as special cases of derived direct images.

{\bf Derived direct images $Rf_*, Rf_!$.}
The derived direct image $Rf_*, Rf_!: D(X) \to D(Y)$,   for every map $f:X \to Y$.
The first thing to know is that $H^*(X,C)= H^*(Y,Rf_*C)$ and that $H^*_c(X,C)= H^*_c(Y,Rf_! C)$, so that we may view them as generalizing cohomology. 

{\bf Pull-backs.} The pull-back  functor $f^*$ is  probably the most intuitive one. The extraordinary pull-back  functor $f^!$
is tricky and  we will not dwell on it. It is the right adjoint to $Rf_!$; for open immersions, $f^!=f^*$; for closed immersions
it is the derived versions of the sheaf of sections supported on the closed subvariety; for smooth maps of relative dimension
$d$, $f^!=f^*[2d]$. A down-to-earth reference for $f^!$ and duality  I like is \ci{iv} (good also, among other things,
as an introduction to Borel-Moore homology). I also like \ci{gel-man}. There is also the seemingly inescapable,
and nearly encyclopedic \ci{k-s}.

\begin{fact}\la{moimp}{\rm A good reference is \ci{borel}.
Given $C \in D(Y)$, and $y\in Y$ there is a system of ``standard neighborhoods'' $U_y(\e)$ (think of $0 < \e \ll 1$ as the radius of an Euclidean ball; of course, our $U_y(\e)$ are singular, if $Y$ is singular at $y$) such that $H^*(U_y(\e), C)$ and $H_c^*(U_y(\e), C)$ are ``constant'' (make the meaning of constant precise) in $ 0 < \e \ll 1$. The $U_y(\e)$ are cofinal
in the system of neighborhoods of $y.$ We have a canonical identification $\m{H}^*(C)_y = H^*(U_y(\e), C)$ for $0 < \e \ll 1$.
This is very important for many reasons. Let us give one. Let $f:X\to Y$ be a map and let $K\in D(X)$. Since $Rf_* K$ is constructible,  we have $(R^*f_* K)_y= H^*(f^{-1}(U_y(\e), K)$ for $0 < \e \ll 1$. Caution: there is a natural map
to $H^*(f^{-1}(y), K)$, but this map is in general neither surjective, nor injective; if the map is proper, then it is
an isomorphism (proper base change). 
}
\end{fact}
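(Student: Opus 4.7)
The plan is to reduce everything to the conic structure of Whitney-stratified complex algebraic varieties. First, I would fix a Whitney stratification $Y = \coprod_\alpha S_\alpha$ adapted to $C$, meaning every cohomology sheaf $\m{H}^q(C)$ is locally constant on each stratum; the existence of such a stratification is standard for constructible complexes on algebraic varieties. Next, in a local real-analytic embedding $Y \hookrightarrow \comp^N$ near $y$, define the standard neighborhoods $U_y(\e) := Y \cap B_\e(y)$, where $B_\e(y)$ is the open Euclidean ball of radius $\e$ in $\comp^N$.

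The core technical input is the Thom--Mather conic structure theorem: for $0 < \e \ll 1$, the pair $(U_y(\e), y)$ is stratified-homeomorphic to the open cone on the link $L_y(\e) := Y \cap \partial B_\e(y)$ with its induced stratification, and for $\e' \le \e$ both sufficiently small the inclusion $U_y(\e') \hookrightarrow U_y(\e)$ is a stratified deformation retract onto a smaller conic neighborhood. Because each $\m{H}^q(C)$ is locally constant on strata, a stratified homotopy equivalence induces an isomorphism on hypercohomology (one can check this degree-by-degree via the spectral sequence associated to the stratification, or by representing $C$ by a complex of injectives and pulling back along the retraction). Hence $H^*(U_y(\e), C)$ is independent of $\e$ for $0 < \e \ll 1$, and the cofinality of the family $\{U_y(\e)\}$ as $\e \to 0^+$ in the system of open neighborhoods of $y$ yields the stalk identification $\m{H}^*(C)_y = \varinjlim_\e H^*(U_y(\e), C) = H^*(U_y(\e), C)$. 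For $H^*_c$, the conic structure delivers the analogous stabilization, either via the exact triangle relating $H^*_c(U_y(\e), C)$ to the cohomology of the link (using that the open cone has trivial compactly supported cohomology apart from the link contribution), or, more cleanly, by invoking Verdier duality to reduce the $H^*_c$-assertion for $C$ to the already-established $H^*$-assertion for the Verdier dual of $C$.

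For the second half, let $f \colon X \to Y$ be a map and $K \in D(X)$ with $Rf_*K$ constructible on $Y$. Applying the stalk formula just proved to $Rf_*K$ gives $(R^q f_* K)_y = \m{H}^q(Rf_* K)_y = H^q(U_y(\e), Rf_* K)$ for $0 < \e \ll 1$, and the tautology $H^*(V, Rf_* K) = H^*(f^{-1}(V), K)$ for open $V \subseteq Y$ then yields the desired identification with $H^q(f^{-1}(U_y(\e)), K)$. The natural restriction map to $H^*(f^{-1}(y), K)$ need not be an isomorphism because $f^{-1}(U_y(\e))$ does not retract onto $f^{-1}(y)$ in general: fiber topology can jump across arbitrarily small punctured neighborhoods of $y$ in $Y$, which is precisely the content of the nearby/vanishing cycle theory. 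When $f$ is proper, proper base change supplies the missing isomorphism. The only serious obstacle in this whole enterprise is the invocation of the Thom--Mather conic structure theorem, whose proof via controlled vector fields subordinate to a Whitney stratification is the substantial technical input; once it is granted, the remainder is formal manipulation with stratifications and hypercohomology.
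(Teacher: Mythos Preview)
Your proposal is correct and follows the standard route that the reference \ci{borel} takes: Whitney stratification adapted to $C$, conic local structure via Thom--Mather, stratified deformation retraction to get constancy of $H^*(U_y(\e),C)$ and the stalk identification, Verdier duality (or the link exact sequence) for $H^*_c$, and then the tautology $H^*(V,Rf_*K)=H^*(f^{-1}(V),K)$ plus proper base change for the application to maps. The paper itself does not give a proof of this Fact; it simply records it with a pointer to \ci{borel}, so there is no ``paper's own proof'' to compare against beyond noting that your sketch is exactly the argument one finds in that reference.
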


{\bf Verdier duality.} This is an anti-self equivalence $(-)^\vee :D(Y)^{op} \cong D(Y)$
whose defining property is a natural perfect pairing 
$H^*(Y, C^\vee) \times H^{-*}_c (Y, C) \lorw \rat$, or, equivalently, of a canonical isomorphism
\beq\la{devd} H^*(Y, C^\vee) \cong  H^{-*}_c (Y, C)^{\vee}.\eeq
If $Y$ is nonsingular irreducible, then $\rat_Y^\vee= \rat_Y [2\dim{Y}]$, 
and we get an identification   $H^{*+2\dim Y}(Y, \rat)=  
H^{-*}_c(Y, \rat)^\vee$, i.e. Poincar\'e duality.

{\bf Stability of constructibility.}
It is by no means obvious, nor easy, that if $C$ is constructible, then  $Rf_*C$ is constructible.
This can be deduced from the Thom isotopy lemmas\footnote{The main two points are: 1)
given a map $f: X\to Y$ of complex algebraic varieties, there is a disjoint union decomposition $Y=\coprod_i Y_i$
into locally closed subvarieties such that $X_i:= f^{-1}Y_i \to Y_i$ is a topological fiber bundle for the classical topology;
2) algebraic maps can be completed  compatibly with the previous assertion (you may want to sit down and come up with a reasonable precise statement yourself)} \ci{gomacsmt}. 
This is a manifestation   of the important principle
that constructibility is preserved under all the ``usual'' operations on the derived category 
of sheaves on $Y$ (see \ci{borel}). The list above is more complete once we include the derived $\m{RH}om$, the tensor product
of complexes (it is automatically derived when using $\rat$-coefficients), the nearby and vanishing cycle functors, etc.

 {\bf Duality exchanges.} Verdier duality exchanges $Rf_*$ with $Rf_!$, and $f^*$ with $f^!$,
i.e.,   $Rf_* (C^\vee) = (Rf_! C)^\vee$ and $f^* (K^\vee) = (f^! K)^\vee$.
Here is a nice consequence: let $f$ be proper (so that $Rf_* = Rf_!)$, then if $C$ is self-dual, then so is $Rf_* C.$

{\bf The importance of being proper.} Proper maps are important for many reasons:  $Rf_!=Rf_*$;  the duality exchanges simplify;
the proper base change theorem holds, a special case of which tells us that $(R^qf_*\rat_X)_y = H^q(f^{-1}(y),\rat)$ (see Exercise~\ref{loccst});
$Rf_*$ preserves pure complexes (Frobenius, mixed Hodge modules);
Grothendieck trace formula is about $Rf_!$; the decomposition theorem is about proper maps.

{\bf Adjunctions.}
We have adjoint pairs
$(f^*,Rf_*)$ and $(Rf_!,f^!)$, hence natural transformations: $Id \to Rf_* f^*$
$Rf_!f^! \to Id.$ By applying cohomology to the first one, we get the pull-back map in cohomology,
and by applying cohomology with compact supports to the second, we obtain the push-forward 
in cohomology with compact supports. $\m{RH}om$ and $\otimes$ also form an adjoint pair (you should
formalize this).

{\bf The attaching triangles and the long exact sequences we already know.} By combining adjunction maps, we get some familiar situations from algebraic topology.
 Let $j: U \to Y \leftarrow Z: i$
be a complementary pair of open/closed embeddings. Given $C \in D(Y)$, we have
the distinguished triangle $i_*i^! C \to C \to j_*j^*C \to i_!i^! C[1]$ and, by applying
sheaf cohomology, we get the long exact sequence of relative cohomology
\[
\ldots \to H^q(Y,U,C) \to H^q(Y,C) \to H^q(U,C_{|U}) \to H^{q+1} (Y,U,C) \to \ldots
\]
If $g: Y \to Z$ is a map, then we can push forward  the attaching (distinguished) triangle
and obtain a distinguished triangle, which will also give rise to hosts of long exact sequences
when fed to cohomological functors. By dualizing, we get $j_!j^* C \to C \to i_*i^* C \to j_!j^* C[1]$
and by taking cohomology with compact supports, we get the long exact sequence of cohomology with compact supports
\[\ldots \to  H^q_c(U,C) \to H^q_c(Y,C) \to H^q_c(Z,C_{|Z}) \to H^{q+1}_c (U,C) \to \ldots
\]  This is nice, and used very often,
because it gives the usual  nice relation between the compactly supported Betti numbers of the three varieties
$(U,Y,Z).$

Exercise~\ref{conecve} asks you to use the first attaching triangle and its push-forward when studying
the resolution of singularities  of a germ of an isolated surface singularity. This  is an important example: it shows
how the intersection complex of the singular surface arises; it relates the non degeneracy of the intersection form
on the curves contracted  by the resolution, to the decomposition theorem for the resolution map. 
The careful study of this example  allows to extract many general ideas and patterns, specifically how to relate the non degeneracy of certain local intersection forms to a proof of the decomposition theorem.

\subsection{Definition of perverse sheaves}\la{defps}
{\bf Before we define perverse sheaves.}
The category of perverse sheaves on an algebraic variety is abelian, noetherian, anti-self-equivalent under Verdier duality, artinian.
The cohomology groups  of perverse sheaves
satisfy Poincar\'e duality, Artin vanishing theorem and  the Lefschetz hyperplane theorem. They are stable under the nearby and
vanishing cycle functors.
The simple perverse sheaves, i.e. the intersection complexes with simple coefficients,
satisfy the decomposition theorem and the relative hard Lefschetz theorem, and their cohomology groups satisfy
the Hard Lefschetz theorem and, when the coefficients are ``Hodge-theoretic'', the Hodge-Riemann bilinear relations.
Perverse sheaves play important roles in the topology of algebraic varieties, arithmetic algebraic geometry,
singularity theory, combinatorics, representation theory,
geometric Langlands program.

Even though not everyone agrees (e.g. I), one may say that perverse sheaves are more natural than constructible sheaves; see Remark~\ref{b6y}.
Perverse sheaves  on singular varieties are close in spirit to locally constant sheaves on algebraic manifolds.

Perverse sheaves  have  at least one drawback: they are not sheaves!\footnote{The ``sheaves'' in perverse sheaves is because the objects and the arrows in $P(Y)$
can be glued from local data, exactly like sheaves; this is false for the derived category and for $D(Y)$!;  do you know why?
As to the term ``perverse'', see M. Goresky's post on Math Overflow: What is the etymology of the term perverse sheaf?}

{\bf The conditions of support and of co-support.}
We say that a constructible complex
$C \in D(Y)$ satisfies the {\em conditions of support} if $\dim \, supp \,\m{H}^i (C)
\leq -i$, for every $i \in \zed$, and  that it satisfies the {\em conditions of co-support} if its Verdier dual $C^\vee$ satisfies
the conditions of support.

\begin{defi}\la{deps} {\rm ({\bf Definition of the category $P(Y)$ of perverse sheaves})
We say that $P\in D(Y)$ is a perverse sheaf if  $P$ satisfies the conditions of support and of co-support.
The category $P(Y)$ of perverse sheaves on $Y$
is the full subcategory of the constructible derived category $D(Y)$ with objects the  perverse sheaves.
}
\end{defi}

{\bf Conditions of (co)support  and vectors of dimensions.}
To fix ideas, let $\dim{Y}=4$. The following  vector exemplifies
the upper bounds for the dimensions
of the supports of the cohomology sheaves $\m{H}^i$ for $-4\leq i \leq 0$ (outside of this interval, the cohomology sheaves 
of a perverse sheaf can be  shown to be zero)
$(4,3,2,1,0)$. For comparison, the analogous vector  for an intersection complex of the form $IC_Y(L)$ is $(4,2,1,0,0)$.
For a table giving  a good visual for the conditions of support and co-support  for perverse sheaves and for intersection complexes,
see \ci{bams}, p.556.

It is important to keep in mind that the support conditions are conditions on the stalks
(direct limits of cohomology over neighborhoods)
of the cohomology sheaves, whereas the conditions of co-support are conditions on the co-stalk
(inverse limits of cohomology with compact supports over neighborhoods)
and as such are maybe  a bit  less intuitive: see Exercise~\ref{fcv}, where this issue is
tackled for the constant sheaf on the   affine cone over an embedded  projective manifold.  
On the other hand, if for some reason we know that  complex $C \in D(Y)$  is Verdier self-dual, then it is perverse iff it satisfies the conditions of support.
Note that the derived direct image via a  proper map of a self-dual complex is self-dual. This  simple remark  is 
very helpful in practice.

This is a good time to carry out  Exercises~\ref{0101} and~\ref{bt5}. 

\subsection{Artin vanishing and Lefschetz hyperplane theorems}\la{avlht}

{\bf Conditions of support and Artin vanishing theorem.}
The conditions of support seem to have first appeared in the proof of the Artin vanishing theorem for constructible sheaves
in SGA 4.3.XIV, Th\'eor\`eme 3.1, p.159:
{\em let  $Y$ be  affine and $F \in Sh_c(Y)$;   then
$H^*(Y, F) =0$ for $* > \dim Y.$} Note that $F[\dim{Y}]$ satisfies the conditions
of support, and that Artin's wonderful proof works for a constructible complex satisfying the conditions of support.
Note also  that if $Y$ is nonsingular and $F=\rat_Y$,
the result is affordable by means  of Morse theory\footnote{The singular case is one of the reasons
for the existence of the book \ci{gomacsmt}}!  

Since perverse sheaves satisfy the conditions of support and co-support, we see that they automatically
satisfy the ``improved'' version of the Artin vanishing  theorem.

\begin{tm}\la{artinio}  {\rm ({\bf Artin vanishing theorem for perverse sheaves})}\footnote{Note that there is no sheaf analogue of Artin vanishing
for compactly supported cohomology: the Verdier dual of a constructible sheaf is not a sheaf!} Let $P$ be a perverse sheaf on the affine variety $Y.$
Then $H^* (Y, P) = 0,$ for $* \neq [-\dim{Y},0]$ and 
$H^*_c (Y, P) = 0, $ for $* \neq [0,\dim{Y}]$.
\end{tm}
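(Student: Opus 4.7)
I would bootstrap from the classical Artin vanishing theorem cited in the excerpt (and its extension to constructible complexes satisfying the condition of support) and then use Verdier duality for the compactly supported half. Concretely, there are three moves: the upper vanishing $H^{>0}(Y,P)=0$, the lower vanishing $H^{<-\dim Y}(Y,P)=0$, and the passage to $H^*_c$ via duality.

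\emph{Vanishings for $H^*(Y,P)$.} For the upper bound, I would run the hypercohomology spectral sequence
\[ E_2^{p,q} = H^p(Y, \m{H}^q(P)) \Longrightarrow H^{p+q}(Y,P). \]
By the conditions of support, the constructible sheaf $\m{H}^q(P)$ is supported on a closed subvariety $S_q\subseteq Y$ of dimension $\leq -q$, and $S_q$ is affine because closed subvarieties of an affine variety are affine. The classical Artin vanishing theorem applied on $S_q$ therefore gives $E_2^{p,q}=H^p(S_q,\m{H}^q(P)|_{S_q})=0$ whenever $p>-q$, i.e. whenever $p+q>0$, so $H^n(Y,P)=0$ for $n>0$. (This is precisely the ``improved'' Artin vanishing alluded to in the excerpt: Artin's induction only uses control on the dimension of the support.) For the lower bound, the same support condition forces $\m{H}^i(P)=0$ whenever $i<-\dim Y$, because otherwise its support would have dimension strictly larger than $\dim Y$; the spectral sequence then immediately yields $H^n(Y,P)=0$ for $n<-\dim Y$.

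\emph{Passage to $H^*_c$.} Verdier duality (\ref{devd}) provides a canonical isomorphism $H^q_c(Y,P)\cong H^{-q}(Y,P^\vee)^\vee$. By the very definition of $P(Y)$, Verdier duality simply interchanges the conditions of support and co-support, so $P^\vee$ is again perverse on $Y$. Applying the two bounds just obtained to $P^\vee$ gives $H^{-q}(Y,P^\vee)=0$ whenever $-q\notin[-\dim Y,0]$, which is exactly $H^q_c(Y,P)=0$ for $q\notin[0,\dim Y]$.

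\emph{Main obstacle.} The only genuinely nontrivial ingredient is the classical Artin vanishing theorem itself, together with the remark that Artin's inductive argument (via generic linear projections and the attaching triangles for an open/closed pair of the form $(Y\setminus H,H)$) only uses the bound on the dimensions of the supports of the cohomology sheaves, not the fact that one is dealing with a sheaf concentrated in a single degree. Once that sheaf-level statement is granted and the symmetry $P\leftrightarrow P^\vee$ of the perverse $t$-structure under Verdier duality is in hand, the theorem is a formal consequence.
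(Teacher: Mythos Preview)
Your proof is correct and follows exactly the approach the paper intends: the paper's ``proof'' is a pointer to Exercise~\ref{avtpf}, which asks the reader to run the Grothendieck spectral sequence $H^p(Y,\m{H}^q(P))\Rightarrow H^{p+q}(Y,P)$, use that the supports of the cohomology sheaves are closed affine subvarieties of the right dimension, and then dualize for the compactly supported statement. You have carried this out in full, including the lower bound $H^{<-\dim Y}(Y,P)=0$ (which the exercise leaves implicit), so there is nothing to add.
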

{\em ``Proof.''} See Exercise~\ref{avtpf}.

Exactly as in the Morse-theory proof of the weak Lefschetz theorem, we have that
the improved Artin vanishing theorem implies 

\begin{tm}\la{lht}{\rm (\textbf{Lefschetz hyperplane theorem for perverse sheaves}).} 
Let $Y$ be  a quasi projective variety, let  $P \in P(Y)$ and let  $Y_1\subseteq Y$ a general hyperplane section.
Then $H^*(Y, P) \to H^*(Y_1, P_{|Y_1})$ is an  isomorphism  for $* \leq -2,$ and injective for $*=-1$.
There is a similar statement for compactly supported cohomology (guess it!).
\end{tm}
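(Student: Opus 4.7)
The plan is to reduce the statement to the Artin vanishing theorem for perverse sheaves (Theorem~\ref{artinio}) via the standard long exact sequence for the open/closed decomposition. Write $i\colon Y_1\hookrightarrow Y$ for the closed immersion of the general hyperplane section, and $j\colon U:=Y\setminus Y_1\hookrightarrow Y$ for the complementary open immersion. The key geometric input is that, because $Y_1$ is cut out by a hyperplane in the ambient projective space, the complement $U$ is affine (this is precisely where ``general hyperplane section'' enters: choose a hyperplane of the projective completion whose removal leaves $Y$ inside an affine open; equivalently one realizes $U$ as a closed subvariety of some $\mathbb{A}^N$).

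Now I would invoke the attaching triangle associated with the pair $(U,Y_1)$:
\[
j_!j^*P \lorw P \lorw i_*i^*P \stackrel{+1}{\lorw}.
\]
Taking hypercohomology and using the identifications $H^q(Y,j_!j^*P)=H^q_c(U,P_{|U})$ and $H^q(Y,i_*i^*P)=H^q(Y_1,P_{|Y_1})$, one obtains a long exact sequence
\[
\ldots \lorw H^q_c(U,P_{|U}) \lorw H^q(Y,P) \lorw H^q(Y_1,P_{|Y_1}) \lorw H^{q+1}_c(U,P_{|U}) \lorw \ldots
\]
The crucial observation is that $j^*$ preserves perverse sheaves (for open immersions $j^!=j^*$, so both support and co-support conditions are inherited), whence $P_{|U}$ is perverse on the affine variety $U$.

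At this point Theorem~\ref{artinio} applied to $P_{|U}$ on the affine $U$ forces $H^*_c(U,P_{|U})=0$ for $*\notin[0,\dim U]$. Plugging this vanishing into the long exact sequence above: for $q\leq -2$ both $H^q_c(U,P_{|U})$ and $H^{q+1}_c(U,P_{|U})$ vanish, yielding the claimed isomorphism $H^q(Y,P)\cong H^q(Y_1,P_{|Y_1})$; for $q=-1$ only the left-hand group $H^{-1}_c(U,P_{|U})$ needs to vanish, which it does, giving injectivity. The compactly supported variant is obtained dually by starting from the triangle $j_!j^*P \to P \to i_*i^*P$ pushed forward to a point (or by applying Verdier duality to the statement above): one gets an isomorphism $H^q_c(Y_1,P_{|Y_1})\cong H^q_c(Y,P)$ for $q\geq 2$ and a surjection for $q=1$.

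The only nontrivial step is really the geometric input that $U$ is affine for a general hyperplane section; once this is in place, the argument is a mechanical unwinding of the attaching triangle against Artin vanishing, in exact parallel with how classical weak Lefschetz for constant coefficients follows from $H^{>\dim U}(U,\rat)=0$ on an affine $U$. I would expect the only mild obstacle to be notational bookkeeping of the degree shift in the perverse t-structure: because perverse sheaves live in degrees $[-\dim Y,0]$ rather than $[0,\dim Y]$, the isomorphism range is $*\leq -2$ (not the more familiar $*\leq \dim Y-2$), and one should double-check the endpoints against the table of (co)support conditions recalled after Definition~\ref{deps}.
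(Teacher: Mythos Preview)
Your approach is exactly the paper's: attaching triangle for the open/closed pair $(U,Y_1)$, long exact sequence, then Artin vanishing on the affine complement $U$. The paper, however, explicitly restricts to the case $Y$ projective, and this is not a cosmetic choice---it is precisely what justifies the identification you wrote down without comment.

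The gap is the line $H^q(Y,j_!j^*P)=H^q_c(U,P_{|U})$. What is always true is $H^q_c(Y,j_!j^*P)=H^q_c(U,P_{|U})$; to replace $H^q_c(Y,-)$ by $H^q(Y,-)$ you need $Y$ compact. For $Y$ merely quasi-projective this identification fails, and then the long exact sequence in ordinary cohomology does not feed you the compactly supported groups on $U$ to which Artin vanishing applies. (Relatedly, your claim that $U$ is ``a closed subvariety of some $\mathbb{A}^N$'' also silently assumes $Y$ is closed in the ambient projective space; for quasi-projective $Y$ the complement $U$ is only locally closed in $\mathbb{A}^N$ and need not be affine.) The fix is simply to do as the paper does: assume $Y$ projective, where both issues disappear. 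The genuinely quasi-projective case needs an additional argument that neither you nor the paper supplies here.
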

\begin{proof}  We give the proof in the case when $Y$ is projective. Let $j:U:=Y\setminus Y_1 \to Y \leftarrow Y_1:i$ be the natural maps. We have the attaching triangle
$Rj_! j^*P \to P \to Ri_* i^* P \to$ and the long exact sequence of cohomology (= cohomology with compact supports because $Y$ is projective!) 
\[\ldots \to H^{-k}_c(Y, Rj_!j^*P) \to H^{-k} (Y, P) \to H^{-k} (Y_1, P_{|Y_1}) \to H^{-k+1} (Y, Rj_!j^*P) \to \ldots \]
We need to show that $H^{-k}_c (Y, Rj_!j^*P)= H^{-k}_c (U, j^*P)=0$ for $-k<0$, but this is Artin vanishing
for perverse sheaves on the affine $U$. 
\end{proof}

\begin{rmk}\la{b6y} {\rm  (\textbf{LHT: Perverse sheaves vs. sheaves})
As the proof given above shows,
once we assume the projectivity of $Y$, any hyperplane section $Y_1$  will do. This is similar
to the classical proof of the  Lefschetz hyperplane theorem due to Andreotti-Frankel  (following a suggestion by Thom) and contained in Milnor's Morse Theory (jewel) book, where,   if $Y$ is projective, then we only need to pick a hyperplane section
that contains all the singularities of $Y$, so that the desired vanishing stems from Lefschetz duality and from Morse theory.
This shows that even the constant sheaf  is not well-behaved on singular spaces! If we try and repeat the proof
above for the constant sheaf on a singular space, we stumble into the realization that we do not have the necessary Artin vanishing for cohomology
with compact supports  for the constant sheaf
on the possibly singular  $U$. This issue disappears if we use perverse sheaves!
}
\end{rmk}

\textbf{The simple perverse sheaves are the intersection complexes.} Since the category $P(Y)$ is artinian, every perverse sheaf $P \in P(Y)$
admits an increasing finite filtration with quotients simple perverse sheaves. The important fact is that
the simple perverse sheaves are the intersection complexes $\m{IC}_S(L)$ seen above with $L$ simple! (Exercise~\ref{artinian}).
There is a shift involved: $\m{IC}_S(L)$ is not perverse on the nose, but if  we set
\beq\la{theshift}IC_S(L):= \m{IC}_S(L) [\dim{S}],
\eeq
then the result is  a perverse sheaf  on $Y$.
One views these
complexes on the closed subvarieties $i:S\to Y$ as complexes on $Y$ supported on $S$ via $i_*$ (we do not do this to simplify the notation).

There is a battery of results in intersection cohomology that generalize  to the singular setting
the beloved classical results that hold for complex algebraic manifolds. See Exercise~\ref{tuch}.

\subsection{The perverse t-structure}\la{r6000}
The constructible derived category $D(Y)$ comes equipped with the standard t-structure, i.e. the truncation functors are the standard ones,
and  whose heart is the abelian category
$Sh_c(Y) \subseteq D(Y)$ of constructible sheaves. A t-structure on a triangulated category  is an abstraction of the notion of standard
truncation \ci{bbd}. A triangulated category may carry several inequivalent  t-structures.

\textbf{The middle perversity t-structure on $D(Y)$.}
The category of perverse sheaves $P(Y)$ is also the heart of a t-structure on $D(Y)$, the middle-perversity t-structure.
Instead of dwelling on the axioms, here is a short discussion. 

\textbf{The perverse sheaf cohomology functors.} Every  t-structure on a triangulated category comes with its own
cohomology functors; the standard one comes with the cohomology sheaves functors.
 The perverse t-structure then comes with the perverse cohomology sheaves
 $\pcs^{i}: D(Y) \to P(Y)$ which are  of course $\ldots$ cohomological, i.e. 
turn distinguished triangles into long exact sequences
\[
A \to B \to C \to A[1]  \Longrightarrow \ldots \to \pc{i}{A}\to \pc{i}{B} \to \pc{i}{C} \to \pc{i+1}{A} \to \ldots,
\]
and, moreover, we have
\beq\la{efbt}
\pc{i}{C[j]}= \pc{i+j}{C}.
\eeq
Let us mention that $C\in D(Y)$ satisfies the conditions of support iff its perverse cohomology sheaves
are zero in positive degrees; similarly, for the conditions of co-support (swap positive with negative).

\textbf{Kernels, cokernels.} Once you have the cohomology functors, you can verify that $P(Y)$ is abelian:
take an arrow $a:P\to Q$ in $P(Y)$, form its cone $C\in D(Y)$, and then you need to verify that $\pc{-1}{C} \to P$
is the kernel and that $Q \to \pc{0}{C}$ is the cokernel. What is the image?

\textbf{Verdier duality exchange}: 
\beq\la{0987}
\pc{i}{C^\vee} =
\pc{-i}{C}^\vee 
\eeq
If $C \cong C^\vee$, then $(\pc{i}{C})^\vee =
\pc{-i}{C}$ and if, in addition, $f$ is proper, then  
\beq\la{vdrf}(Rf_* C)^\vee \cong Rf_* C.
\eeq
The perverse cohomology sheaves of a complex do not determine 
the complex.  However,
Exercise~\ref{okn} tells us that in the decomposition theorem  (\ref{eq2}) we may write 
 \[
 Rf_* \m{IC}_X(M )\cong \bigoplus_{c\in \zed} \pc{c}{Rf_* \m{IC}_X(M)} [-c].
 \]

The perverse cohomology sheaf construction is a way to get perverse sheaves out of any complex. So there are plenty of perverse sheaves.
In fact, we have the following two  rather deep and very (!) surprising facts:

\begin{tm}\la{beno} {\rm (\textbf{The constructible derived category as a derived category})}
Let $Y$ be a variety. 
\ben
\item
$D(Y,\zed)$ with its standard t-structure is equivalent to $D^b(Sh_c(Y,\zed))$ with its standard t-structure
{\rm (Nori \ci{nori}\footnote{This paper contains a lovely proof of the Artin vanishing in characteristic zero})}. 
\item
$D(Y,\rat))$ with its perverse t-structure is equivalent to $D^b(P(Y,\rat))$ with its standard t-structure
{\rm (\ci{beili})\footnote{Beilinson also proves his wonderful Lemma 3.3, a  strengthening of the Artin vanishing in arbitrary characteristic; Nori calls it the ``Basic Lemma'' in \ci{nori}})}. 
\een
\end{tm}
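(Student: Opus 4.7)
The plan is the following general framework. For any triangulated category $D$ equipped with a bounded t-structure with heart $\mathcal{A}$, there is a natural triangulated ``realization'' functor
\[
\mathrm{real} \colon D^b(\mathcal{A}) \longrightarrow D
\]
that is the identity on $\mathcal{A}$, commutes with shifts, and is compatible with the cohomological functors. Proving each part of the theorem amounts to showing this functor is an equivalence: part (1) is the case $D = D(Y,\zed)$ with the standard t-structure and $\mathcal{A} = Sh_c(Y,\zed)$, while part (2) is $D = D(Y,\rat)$ with the perverse t-structure and $\mathcal{A} = P(Y,\rat)$. To construct $\mathrm{real}$, I would follow the filtered-derived-category construction of \ci{bbd}, \S 3.1: a bounded complex in $D^b(\mathcal{A})$ is encoded as a filtered object in $DF(Y)$ whose associated graded pieces lie in the heart (in the appropriate degrees), and $\mathrm{real}(K)$ is read off as the underlying unfiltered object. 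This immediately yields a t-exact triangulated functor extending the identity on $\mathcal{A}$.

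Full faithfulness reduces, by a standard five-lemma dévissage on cohomological amplitude, to showing that for all $A, B \in \mathcal{A}$ and all $i \geq 0$ the natural comparison map
\[
\mathrm{Ext}^i_{\mathcal{A}}(A,B) \longrightarrow \mathrm{Hom}_D(A, B[i])
\]
is an isomorphism. The cases $i = 0, 1$ are formal (Yoneda). For $i \geq 2$, the crux is an \emph{effaceability} statement: every class in $\mathrm{Hom}_D(A, B[i])$ must die after replacing $B$ by a suitable embedding into an object whose higher Ext-groups in $D$ vanish, or dually after replacing $A$ by a suitable quotient of an acyclic resolution. This is precisely where the Basic Lemma enters. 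For part (2), Beilinson's Lemma 3.3 produces, for any affine open $j \colon U \hookrightarrow Y$ and any constructible sheaf $F$ on $U$, a further open $V \subseteq U$ such that $Rj_{V!}(F|_V)[\dim U]$ is perverse \emph{and}, by Artin vanishing on affines (Theorem~\ref{artinio}), has no positive-degree self-Ext's in $D$. These objects generate enough of $P(Y,\rat)$ to effect the effacement. Part (1) proceeds similarly, with Nori's version of the Basic Lemma supplying the analogous pool of constructible sheaves over $\zed$.

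Essential surjectivity is then handled by induction on the length of the cohomological support of an object $C \in D$. The amplitude-one case is trivial since $C \simeq H^n(C)[-n]$ with $H^n(C) \in \mathcal{A}$ (where $H^n$ denotes the t-cohomology, i.e. $\pc{n}{C}$ in part (2)). For the inductive step, the truncation triangle
\[
\tau_{\leq n-1} C \longrightarrow C \longrightarrow H^n(C)[-n] \stackrel{+1}{\longrightarrow}
\]
is classified by a morphism in $\mathrm{Hom}_D(H^n(C)[-n], \tau_{\leq n-1} C[1])$. By the full faithfulness already established (applied to the perverse cohomology sheaves of the two ends, via a spectral-sequence comparison), this morphism lifts to a morphism in $D^b(\mathcal{A})$, and the cone of that lift realizes $C$.

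The main obstacle is the Basic Lemma itself. Its proof for part (2) is a delicate induction on $\dim Y$ together with a Bertini-style general-position argument to kill the ``bad'' cohomology by further shrinking $U$; the subtlety lies in arranging the construction functorially enough, and with enough such objects, to generate the entire heart under extensions and quotients. Once the Basic Lemma is in hand, the rest of the proof is driven entirely by the abstract machinery of t-structures and the filtered derived category.
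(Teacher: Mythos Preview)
The paper does not prove this theorem: it is stated as a pair of ``rather deep and very (!) surprising facts'' with bare citations to Nori \ci{nori} and Beilinson \ci{beili}, and the exposition moves on immediately. So there is no paper-proof to compare against; your outline is to be judged on its own terms against the cited sources.

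Your strategy is the correct one and is essentially Beilinson's argument (which Nori adapts to the standard t-structure with integral coefficients): build the realization functor via the filtered derived category as in \ci{bbd}, \S3.1, reduce full faithfulness to an Ext-comparison, invoke the Basic Lemma to efface higher Ext's, and deduce essential surjectivity by induction on amplitude. Two points of imprecision are worth flagging. First, your paraphrase of Beilinson's Lemma~3.3 is slightly off: the lemma does not assert that the objects $Rj_{V!}(F|_V)[\dim U]$ have ``no positive-degree self-Ext's in $D$''; rather, it is a vanishing statement for the cohomology of $j_!$-extensions on affines (for a constructible $F$ on affine $U$ one finds an open dense $V$ with $H^i(U, j_!(F|_V))=0$ for $i\neq \dim U$), and it is this cellular-type vanishing that, iterated, yields resolutions by objects acyclic enough to efface the relevant classes. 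Second, the phrase ``generate enough of $P(Y,\rat)$'' hides real work: one must show that every perverse sheaf embeds into (or is a quotient of) an object built from such pieces, which in Beilinson's paper is the content of his construction of ``cellular'' perverse sheaves. With those caveats, your plan is sound and matches the literature.
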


Exercise~\ref{okmm} introduces another construction that leads to special perverse sheaves, i.e. the intermediate extension functor
$j_{!*}$. This is crucial, in view of the fact that
intersection cohomology complex $IC_S(L)$  can be defined as the intermediate extension of $L[\dim{S}]$ from 
the open subvariety $S^o\subseteq S_{reg}\subseteq S$ on which $L$ is defined to the whole of $S,$ and thus to any
variety that contains   $S$  as a closed  subvariety.

 \subsection{Intersection complexes}\la{intco1}
Recall the conditions of support and co-support for a complex $P$ to be a perverse sheaves:
$\dim supp \, \m{H}^i(C) \leq -i$, and the same for~$C^\vee.$

The original definition of  intersection complex $IC_S(L)$ of an enriched variety (recall that we start with  $L$ a locally constant sheaf on some 
$S^o \subseteq S_{reg} \subseteq S)$ involves repeatedly pushing-forward and standard-truncating
across the strata of a suitable stratification of $S$, starting from $S^o$; see \ci{bbd}, Proposition 2.1.11. It is a fact that shrinking $S^o$
does not effect the end result (this is an excellent exercise).  The end result can be characterized as follows.

\textbf{Conditions of (co)support for intersection complexes.}
The intersection complex $IC_S(L)$ of an enriched variety $(S,L)$ is the complex $C$, unique up to 
unique isomorphism subject to the following conditions of support and co-support: $C_{|S^o} = L[\dim{S}]$
$\dim supp \, \m{H}^i \leq -i-1$ for every $i \neq -\dim{S}$, and the ``same'' for $C^\vee$.
Recall the two vectors exemplifying the conditions of support  in dimension four for perverse sheaves $(4,3,2,1,0)$, and for 
intersection complexes  $(4,2,1,0,0)$. Using this characterization, the reader should verify that direct images
under finite maps preserve intersection complexes and that the same is true for small maps (see \S\ref{lz3}).

\textbf{Another characterization of  intersection complexes.}
The intersection complex $IC_S(L)$ is the unique perverse sheaf
extending its own restriction to an open dense subvariety $U \subseteq S$
so that the extension is ``minimal'' in the following sense: it has no non zero
perverse subobject or quotient supported on the boundary $S\setminus U$\footnote{It may have, however, a non zero subquotient supported on the boundary; the formation of this 
kind of minimal --a.k.a. intermediate extension-- is not exact on the 
relevant abelian categories: it preserves injective and surjective maps, but it does not preserve exact sequences; see \ci{bams}, p.562.}.

\textbf{Intersection complexes as intermediate extensions.} Let $j: U \to S$
be a  locally closed embedding. Let $P \in P(U)$. Take the natural map (forget the supports)
$Rj_!P  \to Rj_*P.$ Take the map induced at the level of $0$-th perverse cohomology sheaves:
 $a: \pc{0}{Rj_! P} \to \pc{0}{Rj_*P}$. Define  the intermediate extension of $P$  on $U$ to $Y$  by setting 
 $j_{!*} P:= \im\, a \in P(S)$. Let $(S,L)$ be an enriched variety and let $j:S^o\to S$;
 we apply the intermediate extension functor  to ${P:= L[\dim S] \in P(U:= S^o)}$ and we end up with  $IC_S(L)!$ The same conclusion holds if we take any $U$ and $P:= (IC_S(L))_{|U})$ (the intersection complex is the 
intermediate extension of its restriction to any dense open subvariety). 

Since $P(Y)$ is Noetherian and closed under Verdier duality, it is artinian, so that the Jordan-Holder theorem holds.
Now it is a good time to carry out 
Exercise~\ref{artinian}, which shows how to produce  an ``explicit'' Jordan-Holder decomposition for perverse sheaves.
The method also makes it clear that the simple objects in $P(Y)$ are the $IC_S(L)$ with $S \subseteq Y$ an irreducible closed subvariety and 
with $L$  a simple locally constant sheaf  on some Zariski dense open subset of the regular part of~$S.$

\subsection{Exercises for Lecture 2}\la{exlz2}

\begin{exe}\la{cantor}{\rm ( \textbf{Some very non constructible sheaves}) Use the closed embedding $i:\mathfrak{C} \to {\bb A}^1$ of the Cantor set into the complex affine line to show
that the direct image sheaf $i_*\rat_\mathfrak{C}$ is not constructible. Classify the sheaves of rational 
vector spaces on   ${\bb A}^1$ which are both constructible and injective.}
\end{exe}

\begin{ex}\la{0101} {\rm (\textbf{First (non) examples})
If $Y$ is of pure dimension and  $F\in Sh_c(Y)$, then $F[\dim{Y}]$ satisfies the conditions
of support. In general,  $F[\dim{Y}]$  is not perverse as its Verdier dual may fail to satisfy the condition of support.
For example, the Verdier dual of $\rat_Y[\dim{Y}]$  is the shifted dualizing complex $\omega_Y[-\dim{Y}]$
and the singularities of $Y$ dictate whether or not it satisfies the conditions of support; see \ci{bamsv1}, \S4.3.5-7. If $Y$ is
nonsingular of pure dimension, and $L$ is locally constant, then $L[\dim{Y}]$ is perverse, for its Verdier dual is
$L^{\vee} [\dim{Y}].$} \end{ex}

\begin{exe}\la{bt5}{\rm (\textbf{Some perverse sheaves})
The derived direct image of a perverse sheaf via a finite map is perverse. Give examples showing 
that  the  derived direct image via a quasi-finite map of a perverse sheaf
may fail to be perverse. 
 Let $j: X:=\comp^* \to \comp=:Y$ be the natural open embedding; show that  the natural map in $D(Y)$  $Rj_! \rat_X[1]
\to Rj_* \rat_X[1]$  is in fact in $P(Y)$; determine kernel, cokernel and image.  
Show that if we replace $\comp^*$ with $\comp^n\setminus \{0\}$, then  the map above is not one of perverse sheaves.
Show that if instead of removing the origin, we remove a finite configuration of hypersurfaces, then we get a map of perverse sheaves.
Show more generally that  an affine open immersion  $j$ is such that $Rj_!$ and $Rj_*$ preserve perverse sheaves 
(hint: push-forward and use freely the Stein -instead of affine- version of the Artin vanishing theorem
to verify the conditions of support on small balls centered at points on the hypersurfaces at the boundary).
Show that the direct image $Rf_*\rat_X[2]$ with $f:X\to Y$ a resolution of singularities of a surface
is perverse. Let $f:X^{2d} \to Y^{2d}$ be proper and birational, with $X$ nonsingular and irreducible, let $y\in Y$ and let $f$ be an isomorphism
over $Y\setminus y$; give an iff condition that ensures that $Rf_*\rat [\dim{X}]$ is perverse.
Determine the pairs $m\leq n$ such that the blowing up of $f: X\to Y$ of  $\comp^m \subseteq \comp^n=:Y$   is such that
$Rf_* \rat_X[n]$ is perverse.}
\end{exe}

\begin{exe}\la{avtpf} {\rm (\textbf{Artin vanishing: from constructible to perverse sheaves; cohomological dimension})
Assume the Artin vanishing theorem for constructible sheaf and deduce the one for the cohomology
of perverse sheaves by use of the Grothendieck spectral sequence $H^p(Y, \m{H}^q(P)) \Longrightarrow H^{p+q}(Y,P)$.
(Hint: the supports of the cohomology sheaves are closed \underline{affine} subvarieties.)
Dualize the result to obtain the Artin vanishing theorem for the cohomology with compact support of a perverse sheaf.
Use a suitable affine covering of a quasi-projective variety $Y$  to show that the cohomology and cohomology with compact supports
of a perverse sheaf live in the interval $[-\dim{Y}, +\dim{Y}].$ What about a non quasi projective $Y$?
}
\end{exe}

\begin{exe}\la{vbgf}{\rm (\textbf{Intersection complex via push-forward and truncation})
The original Goresky-MacPherson's definition of intersection complex   involves repeated  push-forward and truncation across the strata of a Whitney-stratification. Let us take $j: \comp^n\setminus \comp^{m=0}=:U \to Y:= \comp^n.$
The formula reads $IC_{Y} := \td{-1} Rj_* \rat_U[n]$. Verify that the result is $\rat_Y[n]$. Do this for other values of $m$
and verify that you get $\rat_Y[n]$, again. Take a complete flag of linear subspaces in $\comp^n$, apply the 
general formula given by iterated push-forward and truncation, and verify that you get $\rat_Y[n]$.  What is your conclusion? 
}
\end{exe}

\begin{exe}\la{tuch}{\rm (\textbf{Hodge-Lefschetz package for intersection cohomology})
Guess the precise statements of the following  results concerning intersection cohomology groups:
Poincar\'e duality, 
Artin vanishing theorem, Lefschetz hyperplane theorem, existence of pure  and mixed Hodge structures,
hard Lefschetz theorem, primitive Lefschetz decomposition, Hodge-Riemann bilinear relations.
}
\end{exe}

\begin{exe}\la{kecoke}
{\rm
(\textbf{Injective or surjective?})
Let $j: \bb{A}^1\setminus \{0\} \to \bb{A}^1$ be the natural open embedding. Verify that:
the natural map $j_! \rat \to j_*\rat$ in $Sh_c (\bb{A}^1)$ is injective;  the
natural map  $j_![1] \rat \to j_*\rat [1]$ is \ldots surjective in $P (\bb{A}^1)$.
}
\end{exe}

\begin{exe}\la{okmm}
{\rm
(\textbf{Perverse cohomology sheaves and the intermediate extension functor})
For $j: \comp^n\setminus \{0\} \to \comp^n$ the natural open embedding,
compute $\pc{i}{Rj_! \rat}$ and $\pc{i}{Rj_* \rat}$. Let $j:U\to X$ be an open embedding and let $P \in P(U)$.
Let $a:Rj_! P \to Rj_* P$ be the natural map.
Show that the  assignment defined by $P \mapsto j_{!*} P:= \im \{ \pc{0}{Rj_!P} \to \pc{0}{Rj_*P}\}$ is functorial.
This is the intermediate extension functor.
Find an example showing that it is not   exact (exact:= it sends short exact sequences in $P(U)$ into ones in $P(X)$) (hint:
punctured disk and rank two unipotent and non-diagonal  matrices).
Compute $j_{!*} \rat_U[\dim{U}]$ when $j$ is the embedding of a Zariski-dense open subset
of a nonsingular and irreducible variety. Same for the embedding of affine cones  over projective manifolds minus their vertex into
the cone. Compute $j_{!*} L [1]$ where   $U=\comp^*$ and $L$ is a locally constant sheaf on $U.$
}
\end{exe}

\begin{exe}\la{artinian}{\rm 
(\textbf{Jordan-Holder for perverse sheaves}) Do not assume varieties are irreducible, nor pure-dimensional.
Let $P\in P(Y).$  Find a non empty open nonsingular  irreducible subvariety $j:U\subseteq Y$ such that $Q:=j^*P = L [\dim U]$
for a locally constant sheaf on $U.$ Produce the   natural commutative diagram with $a'$ epimorphic 
and $a''$ monomorphic
\[
\xymatrix{ & P \ar[rd] & \\
\pc{0}{Rj_! Q} \ar[rr]^a  \ar[ru] \ar[rd]^{a'}&& \pc{0}{Rj_* Q} \\
& IC_{\overline{U}}(L) :=\im{a'} \ar[ru]^{a''}.&
}
\]
Deduce formally that $P$ contains a subobject $P'$   together with a surjective map $b:P'\to IC_{\overline{U}}(L).$
Deduce that we have a filtration $\ke{\, b} \subseteq P' \subseteq  P$
with $P'/\ke{\, b}=  IC_{\overline{U}}(L).$ Use noetherian induction to prove that
we can refine  this two-step filtration to a filtration with successive quotients of the form
$IC_S(L)$. Each local system $L$ appearing in this way admits a finite filtration with simple quotients. Refine further to obtain a finite increasing filtration
of $P$ with successive quotients of the form  $IC_S(L)$ with $L$ simple. (In the last step you need to use the fact that
the intermediate extension functor, while not exact, preserves injective maps.)
}
\end{exe}

\begin{exe}\la{conecve} {\rm (\textbf{Attaching the vertex to a cone}) Use attaching triangles and resulting long exact sequences
of cohomology to study $C= Rf_*\rat_X,$ where
$f: X\to Y$ is the resolution of the cone (affine and projective) over a nonsingular embedded projective curve  obtained
by blowing up the vertex. (See \ci{decmigleiden}.)}
\end{exe}

\section{Lecture 3: Semismall maps}\la{lz3}

\textbf{Summary of Lecture 3.}  {\em 
Definition of semismall map. Hard Lefschetz and Hodge-Riemann bilinear relations for semismall maps. Special form of the decomposition theorem for semismall maps.
Hilbert schemes of points on smooth surfaces and the  Grojnowski-Nakajima picture. The endomorphism  and correspondence
algebras are isomorphic and semisimple. Hint of a relation to the Springer picture. }

\subsection{Semismall maps}\la{smmps}
Semismall maps are a very special class of maps, e.g. they are necessarily generically finite.
On the other hand, the blowing up of  point in $\comp^3$ is not semismall. Resolutions of singularities
are very rarely semismall (except in dimension $\leq 2$).
It is remarkable that semismall maps appear in important situations, e.g. holomorphic
symplectic contractions, quiver varieties, moduli of bundles on surfaces, Springer resolutions, convolution on affine grassmannians, standard resolutions
of theta divisors, Hilbert-Chow maps for Hilbert schemes of points on surfaces \ldots
References include  \ci{herdlef} and the beautiful book \ci{chrissginzburg}.
 
We now discuss some of their features. To simplify the discussion, we work
with proper surjective maps $f:X \to Y$  with $X$ nonsingular  and irreducible.

We start with what is likely to be the quickest  possible definition of semismall map.

\begin{defi} {\rm (\textbf{Definition of semismall map})
The map $f$ is said to be semismall if $\dim X\times_Y X =\dim{X}.$}
\end{defi}

Quick is good, but not always transparent. The standard  definition involves consideration of the dimension
of  the locally closed loci $S_k \subseteq Y$ where the fibers of the map
have fixed dimension $k$. Then semismallness is the requirement that $\dim S_k + 2k  \leq \dim{X}$ for every $k\geq  0;$ see Exercise~\ref{dfzsemis}.

\textbf{Small maps.}
We say that the map $f$ is small if it is semismall and $X\times_YX$ has a unique irreducible component of maximal dimension $\dim{X}$ (which one?). For semismall maps, this  is equivalent to having 
$\dim S_k + 2k  <\dim{X}$ for every $k> 0.$

The blowing ups of $\comp^m \subseteq \comp^n$, $m\leq n-2$ are semismall iff
$m=n-2.$ None of these is small. 
The blowing up of the affine cone over the nonsingular quadric in $\pn{3}$ along a plane thru
the vertex is a small map. The blowing up of the vertex is not.

The Springer resolution of the nilpotent cone in a semisimple Lie algebra is semismall and Grothendieck-Springer
simultaneous resolution is small. We shall meet both  a bit later and show how they interact beautifully to give us
a ``decomposition theorem argument'' for the presence an action of the Weyl group of the Lie algebra
on the cohomology of the fibers of the Springer resolution. The Weyl group does not act  on the fibers!

The following beautiful result of D. Kaledin's is a source of a large example of highly non trivial
semismall maps.

\begin{tm}\la{kaltm} {\rm (\textbf{Holomorphic symplectic contractions are semismall} \ci{kaledin})}  A projective birational map from a holomorphic symplectic\footnote{I.e. even-dimensional and admitting  a closed  holomorphic $2$-form $\omega$ which is non-degenerate, i.e. $\omega^{\frac{\dim{X}}{2}}$ is nowhere vanishing}  nonsingular variety
is semismall.
\end{tm}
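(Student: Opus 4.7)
The plan is to reduce the semismallness assertion to the standard dimension inequality, and then exploit the holomorphic symplectic form $\omega$ on $X$ to bound the dimensions of the fibered strata on $Y$.

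Write $2n := \dim X$ and, for each $k \geq 0$, let $S_k \subseteq Y$ be the (locally closed) locus on which fibers of $f$ have dimension exactly $k$. By the standard reformulation recalled just before the theorem statement (see Exercise~\ref{dfzsemis}), semismallness amounts to the inequalities
\[ \dim S_k + 2k \leq 2n \quad \text{for every } k \geq 0. \]
Fix $k$, pick an irreducible component $Z \subseteq \overline{f^{-1}(S_k)}$ dominating a top-dimensional component of $\overline{S_k}$, and let $F$ denote a generic fiber of the induced map $Z \to \overline{f(Z)}$, so that $\dim F = k$ and $\dim Z = \dim S_k + k$. The target inequality then reads $\dim Z \leq 2n - k$, equivalently $\dim Z \leq \dim X - \dim F$.

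The strategy for proving this estimate is to establish two \emph{symplectic} claims at a general smooth point $x \in F$: first, that $F$ is $\omega$-isotropic, i.e. $\omega|_{T_xF} = 0$; second, that $T_xZ$ is contained in the symplectic orthogonal $(T_xF)^{\perp_\omega} \subseteq T_xX$. Since $\omega$ is non-degenerate, these two facts together force
\[ \dim Z \,\leq\, \dim (T_xF)^{\perp_\omega} \,=\, 2n - \dim T_xF \,=\, 2n - k, \]
which is the desired bound. The most natural route to both claims is Poisson-geometric: because $f$ is projective birational and $X$ is normal, $f_*\mathcal{O}_X = \mathcal{O}_Y$ (after normalizing $Y$ if necessary), so the Poisson bivector $\pi = \omega^{-1}$ on $X$ descends to a Poisson bivector $\pi_Y$ on $Y$, stratifying $Y$ into symplectic leaves of even dimension. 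With respect to this structure $f$ is a Poisson morphism, the generic fiber over a symplectic leaf is automatically isotropic in $(X,\omega)$, and the preimage of the closure of a leaf is coisotropic in $X$; these two facts together yield the two tangent-space claims above.

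The main obstacle is making the Poisson-geometric picture rigorous in the possibly singular setting of $Y$: descending $\pi$ to $Y$, verifying that the symplectic leaves have the expected even dimension, and showing that generic fibers over a leaf $L$ of dimension $2\ell$ are isotropic subvarieties of $X$ of dimension exactly $n - \ell$. A practical way to reduce these singular subtleties is to first work over the smooth locus $Y^{\mathrm{sm}}$, where $\omega$ restricts via the isomorphism over the open subset $U \subseteq Y^{\mathrm{sm}}$ on which $f$ is an isomorphism, and then extend the Poisson bivector across the singular locus using Hartogs on the normal variety $Y$. Once these inputs are in place the conclusion is immediate: each $S_k$ is a union of symplectic leaves on which the generic fiber has dimension $k$, hence of leaves of dimension $2(n-k)$, so $\dim S_k \leq 2(n-k)$ and $\dim S_k + 2k \leq 2n$, as required.
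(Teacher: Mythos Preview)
The paper does not supply a proof of this theorem; it is stated with attribution to Kaledin \ci{kaledin} and no argument is given. So there is nothing in the text to compare your proposal against.

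That said, your outline is very much in the spirit of Kaledin's actual proof: descend the Poisson bivector $\omega^{-1}$ from $X$ to the normal variety $Y$, stratify $Y$ by symplectic leaves, and show that over a leaf of dimension $2\ell$ the fibers are isotropic of dimension $n-\ell$, whence the semismallness inequality. Your tangent-space formulation, $T_xF$ isotropic and $T_xZ \subseteq (T_xF)^{\perp_\omega}$, is a clean way to package the conclusion.

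You are candid that the substance lies in the ``main obstacle'' paragraph, and that is accurate: as written, the proposal is an outline rather than a proof. Two points deserve emphasis. First, the assertion that each $S_k$ is a union of symplectic leaves (equivalently, that fiber dimension is constant along leaves) is itself one of Kaledin's theorems, not a formality; it needs the full Poisson analysis, including that the preimage of a leaf is smooth over it. Second, the claim that generic fibers over a leaf are isotropic is not automatic from $f$ being Poisson alone---one uses that the map to the leaf is a smooth symplectic fibration, so that the fibers are coisotropic in the preimage and hence, by a dimension count, Lagrangian there and isotropic in $X$. If you want this to stand as a proof rather than a roadmap, those are the two places where genuine work must be inserted.
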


It is amusing to realize that semismallness  and the Hard Lefschetz phenomenon are essentially equivalent. In fact, we have the following

\begin{tm}\la{hlsemi} {\rm (\textbf{Hard Lefschetz for semismall maps} \ci{herdlef})}
\textbf{Hard Lefschetz and semismall maps.}  Let $f: X\to Y$ be a surjective  projective map of projective varieties
with $X$ nonsingular and let $\eta:=f^*L \in H^2(X,\rat)$ be the  first Chern class of the pull-back to $X$  of an ample line bundle 
$L$ on $Y.$
 The iterated cup product maps $\eta^r: H^{\dim{X}-r}(X,\rat) \to H^{\dim{X} +r}(X,\rat)$ are isomorphisms for every $r\geq 0$ iff the map $f$ is semismall. In the semismall case, we have  the primitive Lefschetz decomposition
and the Hodge-Riemann bilinear relations.
\end{tm}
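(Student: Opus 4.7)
The strategy is to translate hard Lefschetz for $\eta = f^*L$ on $X$ into a statement about the direct image complex $P := Rf_*\rat_X[d]$ on $Y$ (with $d = \dim X$), where the ample class on $Y$ takes over. The bridge is the following clean equivalence I would establish first.

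\textbf{Step 1 (perversity $\Leftrightarrow$ semismallness).} I would show that $f$ is semismall iff $P := Rf_*\rat_X[d]$ is a perverse sheaf on $Y$. Since $X$ is smooth of pure dimension $d$, the complex $\rat_X[d]$ is Verdier self-dual, and properness of $f$ gives $P^\vee \cong P$, so $P$ is perverse iff it satisfies the conditions of support alone. By proper base change, the stalk of $\m{H}^i(P)$ at a point $y$ lying in the stratum $S_k\subseteq Y$ (where fibers have dimension $k$) is $H^{i+d}(f^{-1}(y),\rat)$, which is non-zero only for $i \leq 2k-d$. The support condition $\dim\operatorname{supp}\m{H}^i(P)\leq -i$ then translates, stratum by stratum, into $\dim S_k + 2k\leq d$, which is precisely semismallness.

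\textbf{Step 2 (``if'' direction).} Assume $f$ is semismall, so $P$ is a semisimple perverse sheaf on the projective variety $Y$ (semisimplicity supplied by the decomposition theorem applied to $\m{IC}_X = \rat_X[d]$). Under $H^k(X,\rat) = H^{k-d}(Y,P)$, cup product with $\eta = f^*L$ corresponds to cup product with $L$, so the desired isomorphism $\eta^r: H^{d-r}(X,\rat) \to H^{d+r}(X,\rat)$ becomes $L^r: H^{-r}(Y,P) \to H^r(Y,P)$. This is hard Lefschetz for a semisimple perverse sheaf on a projective variety with respect to an ample class, a pillar of the BBD package equivalent to the relative hard Lefschetz theorem applied to $Y\to \mathrm{pt}$. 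The primitive Lefschetz decomposition follows formally from $\mathfrak{sl}_2$-representation theory, and the Hodge-Riemann relations reduce, via the decomposition $P\cong\bigoplus_a IC_{\overline{S_a}}(L_a)$, to the Hodge-Riemann bilinear relations on the intersection cohomology groups $IH^*(\overline{S_a},L_a)$, which are available in the setting of polarizable Hodge modules (or, for the constant-coefficient case, from \ci{decmightam}).

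\textbf{Step 3 (``only if'' direction).} Conversely, suppose hard Lefschetz via $\eta$ holds on $X$. Writing $Rf_*\rat_X[d] \cong \bigoplus_i P_i[-i]$ by the decomposition theorem, the HL isomorphism on $X$ breaks as a direct sum over $i$ of maps
\[ L^r: H^{-r-i}(Y,P_i) \longrightarrow H^{r-i}(Y,P_i), \]
each of which must therefore be an isomorphism for every $r\geq 0$. Hard Lefschetz applied to the single perverse sheaf $P_i$ on projective $Y$ gives an $\mathfrak{sl}_2$-action on $H^*(Y,P_i)$ in which cohomological degree equals weight and $L$ is the raising operator, and then elementary $\mathfrak{sl}_2$-representation theory applied irreducible by irreducible shows that the simultaneous isomorphism condition for all $r\geq 0$ forces $H^*(Y,P_i)=0$ whenever $i\neq 0$; combined with the semisimplicity and Hodge-theoretic polarizability of the geometric summands $IC_{\overline{S_a}}(L_a)$ that actually appear, this yields $P_i = 0$ for $i\neq 0$. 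Hence $P$ is perverse, and by Step~1 the map $f$ is semismall.

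\textbf{Main obstacle.} The substantive inputs are concealed in Step~2: hard Lefschetz for perverse sheaves with ample coefficients, and the Hodge-Riemann relations for intersection cohomology, are both essentially of the same depth as the decomposition theorem itself. In Step~3 there is a delicate point in passing from the vanishing of $H^*(Y,P_i)$ to the vanishing of $P_i$, since an abstract semisimple perverse sheaf may have zero cohomology (for instance $IC$ of a nontrivial rank-one local system on an elliptic curve); the resolution goes through the fact that the direct summands actually produced by the decomposition theorem are Hodge-theoretically pure, so their top intersection cohomology cannot vanish. The alternative approach of \ci{decmightam}, establishing HL, the primitive decomposition, and the Hodge-Riemann relations simultaneously by induction on $\dim Y$ via Lefschetz hyperplane sections and the non-degeneracy of intersection forms on the ``relevant'' fibers, would be my backup if the direct perverse-sheaf translation proves unwieldy at this stage.
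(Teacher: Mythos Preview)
The paper does not actually prove this theorem; it merely states it and cites \ci{herdlef}. So let me evaluate your argument on its own terms and against the approach of that reference.

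Your Step~1 is correct and is exactly Exercise~\ref{semiperv}. Your Step~2 is also correct: once $P=Rf_*\rat_X[d]$ is perverse and semisimple, hard Lefschetz for the ample class $L$ on projective $Y$ with coefficients in $P$ is available from \ci{bbd}, \ci{samhm}, or \ci{decmightam}, and it translates verbatim into hard Lefschetz for $\eta=f^*L$ on $X$. One should be aware, though, that historically \ci{herdlef} was an input to \ci{decmightam}, so this route is logically sound only if you cite \ci{bbd} or \ci{samhm} for that step.

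The genuine problem is in Step~3. Your $\mathfrak{sl}_2$ argument does show that $H^*(Y,P_i)=0$ for $i\neq 0$, but the passage from this to $P_i=0$ fails, and your proposed fix is incorrect: purity does \emph{not} guarantee nonvanishing hypercohomology. Let $E$ be an elliptic curve and $L$ the rank-one local system with monodromy $\pi_1(E)\to\{\pm 1\}$ nontrivial; this underlies a polarizable weight-zero VHS (the polarization forces $\rho^2=1$), yet $H^0(E,L)=H^2(E,L)=0$ and $\chi(E,L)=0$ give $H^*(E,L)=0$. So $IC_E(L)$ is a nonzero pure Hodge module with vanishing hypercohomology. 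You have not excluded such phantom summands from appearing in $P_i$ for $i\neq 0$.

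The ``only if'' direction in \ci{herdlef} is handled by a direct and elementary argument that sidesteps the decomposition theorem entirely. If $f$ is not semismall, choose an irreducible closed $Z\subseteq Y$ over which the general fiber has dimension $k$ with $\dim Z+2k>d$, and let $W\subseteq X$ be the closure of an irreducible component of $f^{-1}(Z^o)$, so $\dim W=\dim Z+k$. Set $c:=d-\dim W$ and $r:=d-2c=2\dim Z+2k-d>0$, so that $0\neq[W]\in H^{d-r}(X,\rat)$. Since $\eta|_W=(f|_W)^*(L|_Z)$ and $H^{2\dim Z+2}(Z,\rat)=0$, we have $(\eta|_W)^{\dim Z+1}=0$, whence by the projection formula $\eta^{\dim Z+1}\cup[W]=0$. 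But non-semismallness gives $r\geq\dim Z+1$, so $\eta^r\cup[W]=0$ and $\eta^r$ is not injective. This is the argument you should use for the converse; it is both simpler and gap-free.
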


\textbf{Hodge-index theorem for semismall maps.}
There is an important phenomenon concerning projective maps that is worth mentioning, i.e. the signature of certain local intersection forms \ci{decmightam}; for a discussion of these, see \ci{decmigleiden}.
The situation is more transparent in the case of semismall maps, where it
 is directly related to the Hodge-Riemann bilinear relations associated with of Theorem~\ref{hlsemi}.
To have a clearer picture, let us  limit ourselves to state a simple,  revealing   and important special  case.
Let $f:X \to Y$ be a surjective semismall projective map with $X$ nonsingular of some even  dimension $2d$.
Assume that $f^{-1}(y)$ is $d$-dimensional for some $y \in Y.$ By intersecting in
$X$ we obtain the refined  symmetric intersection pairing $H_{2d} (f^{-1}(y)) \times
H_{2d} (f^{-1}(y)) \to \rat$, where we are intersecting the fundamental classes of the irreducible components
of top-dimension $d$
of this special fiber inside  of
$X$. The following is a generalization of a result
of a famous result of Grauert's for $d=1.$

\begin{tm}\la{grd}{\rm (\textbf{Refined intersection forms have a precise sign})}
The refined intersection pairing above  is $(-1)^d$-positive-definite.
\end{tm}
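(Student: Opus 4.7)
The plan is to deduce this sign statement from the Hodge-Riemann bilinear relations part of Theorem~\ref{hlsemi} applied to the fundamental classes of the top-dimensional components of $f^{-1}(y)$. Let $E_1,\ldots,E_n$ be the $d$-dimensional irreducible components of $f^{-1}(y)$ and let $e_i\in H^{2d}(X,\rat)=H^{\dim X}(X,\rat)$ be the cohomology class Poincar\'e dual to $[E_i]$. Each $e_i$ is real and of Hodge type $(d,d)$, and $E_i\cdot E_j=\int_X e_i\cup e_j$.

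The first key step is to verify that each $e_i$ is primitive with respect to $\eta = f^*L$, i.e.\ that $e_i \in \ke\{\eta\cup -\colon H^{2d}(X,\rat)\to H^{2d+2}(X,\rat)\}$. Since $E_i\subseteq f^{-1}(y)$, the restriction $(f^*L)\vert_{E_i}$ is the pullback from the point $\{y\}$ of the one-dimensional fibre $L\vert_y$, hence is trivial; thus $c_1(f^*L)\vert_{E_i}=0$ in $H^2(E_i)$, and pairing against an arbitrary test class in $H^{2d-2}(X,\rat)$ via the projection formula gives $\eta\cup e_i=0$. Hence all the $e_i$ lie in the real $(d,d)$ part of the primitive middle cohomology.

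Next I invoke the Hodge-Riemann bilinear relations furnished by Theorem~\ref{hlsemi} in middle degree $2d=\dim_\comp X$. On a $(d,d)$-primitive class the universal sign $i^{p-q}(-1)^{(p+q)(p+q-1)/2}$ collapses to $(-1)^{d(2d-1)}=(-1)^d$, so the polarization reads $(-1)^d\int_X\alpha\cup\alpha>0$ for every nonzero real primitive $(d,d)$ class $\alpha$. Applied to $\alpha=\sum_ic_ie_i$, this shows that the Gram matrix $\bigl((-1)^d\,E_i\cdot E_j\bigr)_{i,j}$ is positive definite on the image of $H_{2d}(f^{-1}(y))$ in $H^{2d}(X,\rat)$, and hence $(-1)^d$-positive semidefinite on $H_{2d}(f^{-1}(y))=\bigoplus_i\rat\,[E_i]$.

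The main obstacle is then to upgrade this semidefiniteness to strict definiteness on $H_{2d}(f^{-1}(y))$ itself, that is, to prove that the cycle-class map $\bigoplus_i\rat\,[E_i]\to H^{2d}(X,\rat)$ is injective. Hodge-Riemann alone only controls the image, and semismallness must enter decisively at this point. My plan is to extract injectivity from the special form of the decomposition theorem for semismall maps previewed in this lecture's summary: $Rf_*\rat_X[\dim X]$ is itself a perverse sheaf and splits as a direct sum of intersection complexes $IC_{\overline{S_\alpha}}(L_\alpha)$ over the relevant strata; tracking the contributions at $y$ realizes the classes $e_i$ as a basis of a canonical direct summand, forcing their linear independence in $H^{2d}(X,\rat)$. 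Combined with the definiteness on the image established via Hodge-Riemann, this yields $(-1)^d$-positive definiteness of the refined intersection pairing.
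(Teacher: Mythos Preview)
The paper does not give a proof of Theorem~\ref{grd}; it only remarks that the result is ``directly related to the Hodge--Riemann bilinear relations associated with Theorem~\ref{hlsemi}'' and refers to \ci{herdlef}, \ci{decmightam}, \ci{decmigleiden}. Your Steps~1 and~2 (the $e_i$ are $\eta$-primitive of type $(d,d)$; Hodge--Riemann from Theorem~\ref{hlsemi} gives $(-1)^d$-definiteness on their image in $H^{2d}(X,\rat)$) are exactly the route the text points to, and your sign computation is correct. You also correctly isolate the real issue: Hodge--Riemann only controls the image, so one must still prove injectivity of the cycle-class map $\bigoplus_i\rat[E_i]\to H^{2d}(X,\rat)$.

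Your plan to obtain injectivity from the decomposition theorem for semismall maps can be made precise (the strict support conditions force $\m{H}^0(IC_{\overline{S_a}}(L_a))=0$ whenever $\dim S_a>0$, so both the cycle-class map and the restriction map factor isomorphically through the skyscraper summand at $y$). But note the tension with the very next sentence of the paper: ``One can prove that the DT is (essentially) equivalent to the non-degeneracy of these refined intersection forms together with Deligne's semisimplicity of monodromy theorem.'' In the de~Cataldo--Migliorini approach that the lecture is summarizing, Theorem~\ref{grd} is an \emph{input} to the decomposition theorem for semismall maps, not a corollary of it; the references establish injectivity and definiteness together, by induction on hyperplane sections and a limiting argument from the ample case, without presupposing the decomposition theorem. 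Your argument is not formally circular (the decomposition theorem is available from \ci{bbd} or \ci{samhm}), but it reverses the logic the paper is advertising. A minor additional point: Theorem~\ref{hlsemi} as stated assumes $X$ and $Y$ projective, whereas Theorem~\ref{grd} does not; since the refined intersection form is local over $Y$, one should first compactify before invoking the global Hodge--Riemann relations.
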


By looking carefully at every proper map, similar refined intersection forms
reveal themselves. One can prove that the DT is (essentially)  equivalent to the non-degeneracy
of these refined intersection forms together with Deligne's semisimplicity of monodromy theorem; see \ci{decmightam}.

Exercise~\ref{semiperv} relates perverse sheaves and semismall maps:
if $f: X\to Y$ is semismall, then $Rf_* \rat_X[\dim{X}]$ is perverse.

The decomposition theorem  then tells us that 
\beq\la{dtsemismall}
f_* \rat_X [\dim X] = \pc{0}{f_* \rat_X [\dim X]} = 
\bigoplus_{(S,L) \in EV_0} IC_S(L).
\eeq

\begin{??}\la{domsm}{\rm 
What ``are'' the summands  appearing in the  decomposition theorem 
for semismall maps?}
\end{??}

\subsection{The decomposition theorem for semismall maps}\la{endo}

\textbf{A little  bit about stratifications.}
Even if not logically necessary, it simplifies matters to use the stratification theory of maps
to clarify the picture a bit.  There is a finite disjoint union decomposition
$Y=\coprod_{a \in A} S_a$ into locally closed nonsingular irreducible  subvarieties $S_a \subseteq Y$
such that $f^{-1}(S_a) \to S_a$ is locally (for the classical topology) topologically a product over $S_a.$
It is clear that this decomposition refines the one above given by the dimension of the fibers,
so that $\dim S_a + 2 \dim f^{-1} (s) \leq \dim {X}$ for every $s \in S_a.$ Since the map is assumed to be proper,
it is also clear that all direct image sheaves $\sq$ restrict to locally constant sheaves on every $S_a$. We call the $S_a$
the strata (of a stratification of the map $f$).

\begin{defi}\la{relstr} {\rm (\textbf{Relevant stratum})
We say that $S_a$ is relevant if we have: \[
\dim S_a + 2 \dim f^{-1} (s) = \dim {X}.\]
We denote by $A_{rel} \subseteq A$ the set of relevant strata.
}
\end{defi}

Exercise~\ref{corelt} shows that, for each relevant stratum $S_a$,  the direct image sheaf
$\m{R}^{\dim{X} - \dim{S_a}}$ restricted to $S_a$ is locally constant, semisimple, with finite monodromy.
We denote this restriction by $L_a$.

\begin{tm}\la{dtsemi} {\rm (\textbf{Decomposition theorem  for semismall maps})}
 Let $f:X\to Y$ be proper surjective  semismall with 
$X$ nonsingular. Then there is a direct sum decomposition
\[f_* \rat_X[\dim{X}] = \bigoplus_{a \in A_{rel}} IC_{\overline{S_a}} (L_a).\]
\end{tm}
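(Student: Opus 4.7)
The plan is to combine three ingredients: (i) perversity and Verdier self-duality of $P := Rf_*\rat_X[\dim X]$; (ii) the general decomposition theorem (Theorem~\ref{dt01}) applied to $\m{IC}_X = \rat_X$, which produces a direct-sum decomposition into intersection complexes; (iii) a generic-stalk computation stratum-by-stratum that both selects the relevant strata and identifies the local systems $L_a$.

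For (i), perversity of $P$ is exactly Exercise~\ref{semiperv}, and self-duality is automatic: $X$ is nonsingular irreducible so $(\rat_X[\dim X])^\vee \cong \rat_X[\dim X]$, and Verdier duality exchanges $Rf_*$ with $Rf_!$, which agree because $f$ is proper. For (ii), applying Theorem~\ref{dt01} with $M = \rat$ and using that $P$ lives in a single perverse degree forces all shifted summands $IC_S(L)[-q]$ with $q \neq 0$ to vanish, so
\[
P \;\cong\; \bigoplus_\beta IC_{\overline{T_\beta}}(L_\beta),
\]
a finite direct sum with each $L_\beta$ semisimple on a dense open of $T_\beta$. Constructibility of $Rf_*\rat_X$ with respect to the stratification $Y = \coprod_a S_a$ of the map then allows one to take each $\overline{T_\beta}$ to be the closure of some stratum $S_a$.

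The crux is (iii). Fix a stratum $S_a$, set $d_a := \dim f^{-1}(s)$ for $s \in S_a$, pick $s$ generic, and compare both sides at the level of the cohomology sheaf $\m{H}^{-\dim S_a}$ stalked at $s$. On the left, proper base change combined with local triviality of $f$ over $S_a$ gives
\[
\m{H}^{-\dim S_a}(P)_s \;=\; H^{\dim X - \dim S_a}(f^{-1}(s), \rat),
\]
which vanishes whenever $\dim X - \dim S_a > 2 d_a$. Semismallness gives $\dim S_a + 2 d_a \leq \dim X$, so this stalk is nonzero precisely when $\dim S_a + 2 d_a = \dim X$, i.e.\ exactly when $S_a$ is relevant (Definition~\ref{relstr}); in that case it is the top fiber cohomology $H^{2 d_a}(f^{-1}(s), \rat)$, with canonical basis indexed by top-dimensional components of $f^{-1}(s)$. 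On the right, the strict intersection-complex support condition $\dim \, \mathrm{supp}\, \m{H}^i \leq -i-1$ for $i \neq -\dim S_b$ forces $\m{H}^{-\dim S_a}(IC_{\overline{S_b}}(L_b))_s = 0$ at a generic $s \in S_a$ whenever $\overline{S_b} \neq \overline{S_a}$ (the cases $\dim S_b > \dim S_a$, $\dim S_b < \dim S_a$, and $\dim S_b = \dim S_a$ with distinct closures are all excluded, by the support bound or by $s \notin \overline{S_b}$), and equals $(L_b)_s$ when $\overline{S_b} = \overline{S_a}$. Matching the two sides shows that the summands are indexed exactly by the relevant strata $A_{rel}$, with $L_a$ identified as $(R^{\dim X - \dim S_a} f_* \rat)|_{S_a}$ — the local system of Exercise~\ref{corelt}.

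The main obstacle I expect is the careful handling of the strict support condition for intersection complexes: it is precisely the gap between $\leq -i-1$ (for $IC$) and the weaker $\leq -i$ (for a general perverse sheaf) that makes the generic-stalk comparison diagonal over the strata, and it is easy to misquote. Verdier self-duality of $P$ plays a complementary role: it ensures that the cosupport conditions match automatically on both sides, so no separate dual check is needed, and it is compatible with the Verdier self-duality of each $IC_{\overline{S_a}}(L_a)$ against the intersection pairing on top-dimensional fiber components (cf.\ Theorem~\ref{grd}).
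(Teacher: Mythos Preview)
Your proof is correct and follows essentially the same approach as the paper, which does not give a self-contained argument for Theorem~\ref{dtsemi} but assembles it from the surrounding discussion: perversity of $Rf_*\rat_X[\dim X]$ (Exercise~\ref{semiperv}) plus the general decomposition theorem yield (\ref{dtsemismall}), and the identification of the summands via the relevant strata and the local systems $L_a$ of Exercise~\ref{corelt} is precisely your generic-stalk computation in (iii). Your explicit invocation of the strict $IC$ support condition to make the stratum-by-stratum comparison diagonal, and your remark that constructibility with respect to $\{S_a\}$ forces each support $\overline{T_\beta}$ to be a stratum closure, cleanly fill in points the paper leaves implicit.
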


Since the locally constant sheaf $L_a$ is semisimple,  
 it admits the isotypical direct sum decomposition (\ref{isotyp}), i.e. we have   $L_a = \oplus_\chi L_{a,\chi} \otimes M_{a,\chi}$
where $\chi$ ranges over a finite set of distinct isomorphism classes 
of simple locally constant sheaves on $S_a$ and $M_{a,\chi}$ is a vector space of rank
the  multiplicity $m_{a,\chi}$ of the  locally constant sheaf $L_\chi$ in $L_a.$
The decomposition theorem  then reads 
\beq\la{nju}
f_* \rat_X [\dim{X}] =
\bigoplus_{a , \chi} IC_{\overline{S_a}} (L_{a,\chi} \otimes M_{a,\chi}).
\eeq

\subsection{Hilbert schemes of points on surfaces and Heisenberg algebras}\la{hilb}

An excellent reference is \ci{nakahilb}.
Let $X$ be a nonsingular  complex surface. For every $n \geq 0$ we have the Hilbert scheme $X^{[n]}$
of $n$ points on $X$.   It is irreducible nonsingular of dimension $2n$. There is proper birational
surjective map  $\pi: X^{[n]} \to X^{(n)}$ onto the $n$-th symmetric product sending a length $n$ zero dimensional subscheme
of $X$ 
to its support counting multiplicities. There is a natural stratification of the symmetric product
 of the map: $X^{(n)} = \coprod_{\nu \in P(n)} X^{(n)}_\nu$, where $P(n)$ is the set of partitions $\nu=\{\nu_j\}$
 of the integer $n$ (the $\nu_j$ are positive integers adding up to $n$) obtained by taking the locally closed irreducible
 nonsingular dimension $2 l(\nu)$  ($l$ is the length of partition  function)
 sets of points $\sum_i n_ix_i \in X^{(n)}$ with type $\{n_i\}$ given by $\nu.$ The remarkable fact is 
 that the fibers $\pi^{-1} (x_\nu)$ of the points $x_\nu \in X^{(n)}_{\nu}$ are irreducible
 of dimension $\sum_j (\nu_j -1) = n- l(\nu).$ It follows that the map $\pi$ is semismall. In fact, $X^{[n]} = \coprod_{P(n)}
 \pi^{-1} (X^{(n)}_{\nu}) \to \coprod_{P(n)} X^{(n)}_{\nu}$ is a stratification of the  semismall map $\pi$ and all the 
 strata are relevant. Since the fibers are all irreducible, the relevant locally constant sheaves are all constant of rank one.
 In particular, the decomposition theorem for $\pi$ takes the form
\[R\pi_* \rat_{X^{[n]}} [2n] = \oplus_{\nu \in P(n)} IC_{\overline{X^{(n)}_{\nu}}}\,.\]
 A second remarkable fact is that the normalization $\overline{X^{(n)}_{\nu}}$ can be identified
 with a product of symmetric products $X^{(\nu)}:= \prod_{i=1}^{n} X^{(a_i)}$, where $a_i$ is the number of times
 that $i$ appears in $\nu.$ This is a variety obtained by dividing a nonsingular variety by the action of a finite group; in particular,
 its intersection complex is the constant sheaf.  By the IC normalization principle Fact~\ref{nocpri}, we see that
 $IC_{\overline{X^{(n)}_{\nu}}}$ is the push-forward of the  shifted constant sheaf from the normalization.
 By taking care of shifts, after the dust settles, we obtain the G\"ottsche formula: (for $n=0$ take $\rat$ on both sides)
 \beq\la{gofo}
 {\bb H}(X):=
 \bigoplus_{n\geq 0} H^*(X^{[n]}) = \bigoplus_{n\geq 0} \bigoplus_{\nu \in P(n)}  H^{*- cl(\nu)} (X^{(\nu)}, \rat),
 \eeq where the colength $cl(\nu):= n -l(\nu).$

If we take $X = \comp^2$, then something remarkable emerges: look at (\ref{23}) and (\ref{24}).  The formula above, 
 taken for every $n \geq 0$ gives
\beq\la{23}
\sum_{n=0}^\infty \dim H^*({\comp^2}^{[n]}, \rat) = \prod_{j=1}^\infty \frac{1}{1-q^j}.
\eeq
Let $R:= \rat [x_1, x_2, \ldots ]$ be the algebra of polynomials in the  infinitely many indeterminates $x_i$, declared to be of degree $i.$
The infinite dimensional Heisenberg algebra $\ms{H}$ is the Lie algebra
with underlying rational vector space the one with basis $\{ \{ d_i \}_{i<0}, c_0, \{m_i\}_{i>0} \}$
and subject to the following relations: $c_0$ is central, the $d_i$'s commute with each other,
the $m_i$'s commute with each other, and $[d_{i}, m_j]=\delta_{-i,j} c_0$. Then $R$ is an irreducible
$\ms{H}$-module generated by $1$ where $d_i$ acts as formal  derivation by $x_i$ and $m_j$ by multiplication by $x_j$.
The dimension $a_n$  of the space of homogeneous polynomials of degree $n$ is given by:
\beq\la{24}
\sum_{n=0}^\infty a_n q^n = \prod_{j=1}^\infty \frac{1}{1-q^j}.
\eeq
Well, isn't this a coincidence! The operators $d_i, m_i$  change the  homogeneous degree of $x$-monomials by $\pm i$. This,
together with the formalism of correspondences in products,  suggests
that  there should be geometrically meaningful cohomology classes in $H^*({\comp^2}^{[n]} \times {\comp^2}^{[n\pm i]}, \rat)$
that reflect, on the Hilbert scheme side, the   Heisenberg algebra action
on the polynomial side. 

This is indeed the case and it is due to Grojnowski and  to Nakajima: they  guessed what above, constructed algebraic cycles on the 
products of Hilbert schemes above that would be good candidates and then verified the Heisenberg Lie algebra relations.
In fact,  for every nonsingular surface $X$,  there is an associated  (Heisenberg-Clifford) ``algebra''  $\ms{H}(X)$ that acts geometrically and irreducibly
on $\bb{H}(X)$ (\ref{gofo}).

\subsection{The endomorphism algebra \texorpdfstring{$\text{End} (f_*\rat_X)$}{End (f*QX)}}\la{endo1}
An reference here is \ci{decmigsemi}.

\textbf{Semisimple algebras.}  A semisimple algebra is an  associative artinian (dcc) algebra 
over a field with trivial Jacobson ideal (the ideal killing all simple left modules). The Artin-Wedderburn theorem classifies the semisimple algebras over a field as the ones which are finite Cartesian products
of matrix algebras over  finite dimensional division algebras over the field. 

\textbf{Warm-up.}
Show that
$M_{d \times d}(\rat)$ is semisimple. Show that the upper triangular matrices do not
	form a semisimple algebra. Hence if ${f:= pr_2: \pn{1} \times \pn{1} \to \pn{1}}$,
then $\text{End}_{D(\pn{1})} (Rf_* \rat)$ is not a semisimple algebra.

\begin{tm}\la{semsemis}{\rm (\textbf{Semismall maps and semisimplicity of the End-algebra})}  Let things be as in Theorem~\ref{dtsemi}.
The endomorphism $\rat$-algebra $\text{End}_{D(Y)} f_* \rat_X [\dim{X}]$
is  semisimple.
\end{tm}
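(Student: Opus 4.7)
The plan is to show that $P := f_*\rat_X[\dim X]$ is a perverse sheaf and then use the decomposition theorem (in its isotypical refinement) together with Schur's lemma to identify $\text{End}_{D(Y)}(P)$ as a finite product of matrix algebras over division rings, hence semisimple by Artin--Wedderburn.

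First I would invoke Exercise~\ref{semiperv} to place $P$ inside $P(Y)$. Since $P(Y)$ is a full subcategory of $D(Y)$, composition in $P(Y)$ is inherited from $D(Y)$, so
\[
\text{End}_{D(Y)}(P) \;=\; \text{End}_{P(Y)}(P)
\]
as $\rat$-algebras. The problem is thus reduced to an intrinsic computation inside the abelian category of perverse sheaves.

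Next I would combine Theorem~\ref{dtsemi} with the isotypical decomposition (\ref{nju}) to obtain, in $P(Y)$, an isomorphism
\[
P \;\cong\; \bigoplus_{(a,\chi) \in I} IC_{\overline{S_a}}(L_\chi)^{\oplus m_{a,\chi}},
\]
indexed by a finite set $I$ of pairs with $a \in A_{rel}$ and $L_\chi$ a simple locally constant sheaf appearing in $L_a$. Each summand $IC_{\overline{S_a}}(L_\chi)$ is a simple object of $P(Y)$ by the description of the simple perverse sheaves in \S\ref{intco1}. Distinct indices give non-isomorphic simples: if $a \neq a'$, the closures $\overline{S_a}$ and $\overline{S_{a'}}$ are distinct irreducible subvarieties of $Y$ (as each stratum is open dense in its own closure and the strata are disjoint); and if $a = a'$, then distinct $\chi$ correspond to non-isomorphic simple local systems on~$S_a$.

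To conclude, I would apply Schur's lemma, which is available because $P(Y)$ is abelian and artinian: for every $(a,\chi) \in I$, the ring $D_{a,\chi} := \text{End}_{P(Y)}(IC_{\overline{S_a}}(L_\chi))$ is a finite-dimensional division $\rat$-algebra (finite-dimensionality coming from the finite monodromy of $L_a$ supplied by Exercise~\ref{corelt}), and Homs between non-isomorphic simples vanish. Therefore
\[
\text{End}_{P(Y)}(P) \;\cong\; \prod_{(a,\chi) \in I} M_{m_{a,\chi}}(D_{a,\chi}),
\]
which is semisimple by Artin--Wedderburn. The main delicate point is the distinctness of the simple summands; this I would handle via the characterization of $IC_S(L)$ as the intermediate extension of its restriction to any Zariski dense open $U \subseteq S_{reg}$, which guarantees that the enriched pair $(\overline{S_a}, L_\chi)$ is recoverable from the isomorphism class of $IC_{\overline{S_a}}(L_\chi)$.
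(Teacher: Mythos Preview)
Your proof is correct and follows essentially the same approach as the paper's: decompose $P$ via Theorem~\ref{dtsemi} and the isotypical refinement (\ref{nju}) into a direct sum of simple intersection complexes, apply Schur's lemma to see that Homs between distinct simples vanish and that each $\text{End}(IC_{\overline{S_a}}(L_\chi))$ is a division algebra $D_{a,\chi}$, and conclude via Artin--Wedderburn. Your version is slightly more explicit in a few places the paper leaves implicit---identifying $\text{End}_{D(Y)}(P)$ with $\text{End}_{P(Y)}(P)$, arguing distinctness of the simple summands, and noting finite-dimensionality of the $D_{a,\chi}$---but the argument is the same.
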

\begin{proof} We see more important properties of intersection complexes at play,
i.e. the Schur lemma phenomena for simple perverse sheaves.

\n
By simplicity, $Hom (IC_{\ov{S_a}} (L_{\chi}), IC_{\ov{S_b}} (L_{\psi}) ) = 
\delta_{\chi, \psi} \text{End}(IC_{\ov{S_a}}(L_{\chi}))$ (i.e., there are no non zero maps if they differ): in fact,
look at kernel and cokernel  and use simplicity.  This leaves us with considering
terms of the form $\text{End}(IC_{\ov{S_a}}(L_{\chi}))$ whose elements, for the same reason as above,
are either  zero,  or are  isomorphisms. These terms are thus division algebras
$D_{a,\chi}$.  It follows that \[\text{End}_{D(Y)} (Rf_* \rat_X [\dim{X}]) = \prod_{a,\chi} 
M_{d_{a,\chi} \times d_{a,\chi}} (D_{a,\chi}),\]
which is a semisimple algebra by the Artin-Wedderburn theorem.
\end{proof}

\textbf{The endomorphism algebra as a geometric convolution algebra.}
A reference here is also  \ci{chrissginzburg}.
We can realize the algebra $\text{End}_{D(Y)} (Rf_*\rat_X)$  of endomorphisms  in the derived category  in geometric terms as the convolution algebra
${H_{2\dim{X}}^{BM} (X\times_Y X)}$, which is thus semisimple. Let us discuss this a bit.

Let $X$ be a nonsingular projective variety. Then, we have an isomorphism of
algebras between the first and last term
\beq\la{cosio}
\begin{aligned}
\text{End}(H^*(X)) &=_1 H^*(X)^\vee \otimes H^*(X) \\
&\cong_2 
H^*(X)\otimes H^*(X) \cong_3 H^*(X \times X) \cong_4 H_*(X\times X)
\end{aligned}\eeq
where: $=_1$: linear algebra; $\cong_2$:
Poincar\'e duality; $\cong_3$: K\"unneth; $\cong_4$: Poincar\'e duality; and where:
the algebra structure on the last term is the one given by the formalism of composition
of  correspondences in products (see Exercise~\ref{corr}).

The classes $\Gamma \in H_{\gamma} (X\times X)$ appearing in Exercise~\ref{corr} are called correspondences. This picture generalizes well, 
but not trivially, to  proper maps $f: X \to Y$ from nonsingular varieties as follows.

\begin{tm}\la{cons} {\rm (\textbf{Correspondences and endomaps in the derived category})} Let $f: X \to Y$ be a proper 
map from a nonsingular variety.
There is a natural isomorphism  $\text{End}_{D(Y)} (f_*\rat_X) \cong H^{BM}_{2\dim{X}} (X\times_Y X)$ of $\rat$-algebras.
\end{tm}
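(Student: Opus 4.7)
The plan is to mimic the chain of canonical isomorphisms in (\ref{cosio}) in the relative setting over $Y$. Writing $d := \dim X$, my first step will be to rewrite the endomorphism group as a Hom-group on $X$ via the adjunction $(Rf_!, f^!)$; since $f$ is proper, $Rf_! = Rf_*$, and I obtain
\[
\text{End}_{D(Y)}(Rf_*\rat_X) \;\cong\; \text{Hom}_{D(X)}(\rat_X,\, f^! Rf_* \rat_X).
\]
Next I would apply proper base change in its dual form $f^! Rf_! \cong Rp_{1!} p_2^!$ to the Cartesian square
\[
\xymatrix{
X\times_Y X \ar[r]^{p_2} \ar[d]_{p_1} & X \ar[d]^f \\
X \ar[r]_f & Y
}
\]
(using again properness to replace $Rp_{1!}$ by $Rp_{1*}$) to identify $f^! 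Rf_* \rat_X \cong Rp_{1*} p_2^! \rat_X$. A second adjunction $(p_1^*, Rp_{1*})$, combined with $p_1^* \rat_X = \rat_{X\times_Y X}$, then gives
\[
\text{End}_{D(Y)}(Rf_*\rat_X) \;\cong\; \text{Hom}_{D(X\times_Y X)}(\rat_{X\times_Y X},\, p_2^! \rat_X).
\]

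The nonsingularity of $X$ enters at this point. Since $X$ is smooth of pure dimension $d$, its dualizing complex is $\omega_X = \rat_X[2d]$, hence
\[
p_2^! \rat_X \;=\; p_2^! \omega_X[-2d] \;=\; \omega_{X\times_Y X}[-2d],
\]
using the compatibility of $f^!$ with dualizing complexes. Unwinding the definition $H^{BM}_k(Z) = \text{Hom}_{D(Z)}(\rat_Z, \omega_Z[-k])$ of Borel--Moore homology, the composite of the three isomorphisms above yields the desired identification
\[
\text{End}_{D(Y)}(Rf_* \rat_X) \;\cong\; H^{BM}_{2d}(X\times_Y X)
\]
of $\rat$-vector spaces, natural in $f$.

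The remaining, and main, task is to verify that this is an isomorphism of $\rat$-algebras, where the target carries the convolution product built from the three projections $p_{ij}: X\times_Y X\times_Y X \to X\times_Y X$ via pull-back, cap product, and proper push-forward along $p_{13}$, as in Exercise~\ref{corr}. The strategy is to run the three steps above on the triple fibered power: given two classes $\alpha, \beta$, express their composition in $\text{End}_{D(Y)}(Rf_*\rat_X)$ by applying base change and adjunction a second time to descend $\alpha \otimes \beta$ to a class on $X\times_Y X\times_Y X$, and then recognize the image of $\alpha \circ \beta$ under the single-step isomorphism above as the proper push-forward along $p_{13}$ of that class. I expect the main obstacle to be purely bookkeeping: threading the units and counits of the two adjunctions, and the base change natural transformations, consistently across the triple fibered product, so that they match the pull-push formula defining convolution. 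Nothing conceptually new beyond (\ref{cosio}) is needed; a careful write-up of this compatibility can be found in~\ci{chrissginzburg}.
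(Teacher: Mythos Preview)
Your proof is correct and follows essentially the same approach as the paper: the paper does not give a proof directly after the theorem but instead outlines the identical four-step argument (adjunction, proper base change, second adjunction, identification with Borel--Moore homology via smoothness of $X$) as a guided exercise, and, like you, defers the compatibility with the convolution product to \ci{chrissginzburg}.
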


For our semismall maps, Exercise~\ref{5050} provides  an evident geometric basis of the vector space ${H^{BM}_{2 \dim X} (X \times_Y X)}$.

Since there is a basis of $H^{BM}_{2 \dim X} (X \times_Y X) =\text{End}_{D(Y)} (Rf_* \rat_X)$ given by algebraic
cycles, a formal linear algebra manipulation shows that if $X$ is projective, then  
decomposition  $H^*(X,\rat)=
\oplus_{a \in A_{rel}} IH^{*- {\rm codim}(S_a)}(\overline{S_a}, L_a)$ is compatible with the Hodge $(p,q)$-decomposition,
i.e. it is given by pure Hodge substructures; see Exercise~\ref{iths}.  In fact, one even has a canonical decomposition of Chow motives
reflecting the decomposition theorem for semismall maps; see \ci{decmigsemi}.  Look at the related (deeper) Question~\ref{modt}.

\subsection{Geometric realization of the representations of the Weyl group}\la{georw}
An excellent reference is \ci{chrissginzburg}.

There is a well-developed theory of representations of finite groups $G$ (character theory)
into finite dimensional complex vector spaces. In a nearly tautological sense, this theory is equivalent to
the representation theory of the group algebra~$\rat [G]$.

If we take the Weyl group  $W$ of any  of the usual suspects, e.g. $SL_n(\comp)$ with Weyl group
the symmetric group $S_n$, then  we can  ask whether
we can realize  the irreducible representations of $W$ by using the fact that $W$ is a Weyl group.

Springer realized that this was indeed possible, and in geometric terms!
In what follows, we do not reproduce this amazing story, but we limit ourselves to showing how the decomposition theorem allows
(there are other ways) to introduce the action of the Weyl group
on the cohomology of the Springer fibers. The Weyl group does not act  algebraically on these fibers!

Take the Lie algebra  $sl_n (\comp)$ of traceless $n\times n$ matrices. Inside of it there is
the cone $N$ with vertex the origin given by the nilpotent matrices. Take the flag variety $F$,
i.e. the space of complete flags $f$ in $\comp^n$. Set $\w{N} := \{(n,f) |, n \, \mbox{stabilizes} \, f\} \subseteq N\times F.$
Then $\w{N} \to F$ can be shown to be the projection $T^*F \to F$ for the cotangent bundle (and $\w{N}$ is thus  a holomorphic symplectic manifold)
and  the projection $\pi: \w{N} \to N$ is a resolution of the singularities on the nilpotent cone $N$.

The map $\pi$ is semismall! We know this, for example, from Kaledin's Theorem~\ref{kaltm}. In fact, it was known much earlier,
by the work of many.
We can partition $N$ according to the Jordan canonical form. This gives rise to a stratification of $N$ and of $\pi.$
Every stratum of this stratification turns out to be  relevant for the semismall map~$\pi.$ 

It is amusing to realize that the intersection form associated with the deepest stratum (vertex)  gives rise to $\pm$ the Euler number of
$F$ and that the one associated with the codimension two stratum yields the Cartan matrix
for~$sl_n(\comp)$.

The fibers of the map $\pi$ are called Springer fibers. Springer proved that all the irreducible representations
of the Weyl group occur as direct summands of the action of the Weyl group on the  homology of the Springer fibers.
This beautiful result tells us that indeed one can realize geometrically such representations.

Note that the Weyl group does not act algebraically on the Springer fibers. 

In what follows we aim at  explaining how the Weyl group acts on the perverse sheaf $R\pi_* \rat_{\w{N}}.$
In turn, by taking stalks, this explains why the Weyl group acts on the homology of the Springer fibers.

Instead of sticking with $\pi: \w{N} \to N$, we consider $p:\w{sl_n(\comp)} \to sl_n(\comp)$ defined by the same kind of incidence relation.
The difference is that if we take the Zariski open set $U$ given by diagonalizable matrices with $n$ distinct eigenvalues, 
then $p$ is a topological Galois cover with group the Weyl group (permutation of eigenvalues).
We have a Weyl group action! Unfortunately $N \cap U = \emptyset!$ On the other hand, $p$ is \ldots small! So $Rp_*
\rat=\m{IC}(L)$, where $L$ is the local system on $U$ associated with the Galois cover with group
the Weyl group. Then $W$ acts on $L.$ Hence it acts on $IC(L)$ by functoriality of the intermediate extension
construction. Since the map $p$ is proper, and it restricts to $\pi$ over $N$, we see that
the restriction of $Rp_*\rat = IC(L)$  to $N$ is $R\pi_*\rat$ which thus finds itself endowed, almost by the trick of a magician,
with the desired $W$ action!

\subsection{Exercises for Lecture 3}\la{exlz3}

\begin{exe}\la{dfzsemis}{\rm (\textbf{Semismallness and fibers})}  {\rm Show that, for any maps ${f:X\to  Y}$,  we always have $\dim{X\times_Y X} \geq \dim{X}.$
Use Chevalley's result on the upper semicontinuity of the dimensions of the fibers
of maps of algebraic varieties to produce a finite disjoint union decomposition
$Y = \coprod_{k \geq 0} S_k$ into locally closed subvarieties with 
$\dim f^{-1}(y) =k$ for every $y \in S_k.$ Show that $f$ is semismall
iff we have $\dim S_k + 2k  \leq \dim{X}$\footnote{Think of it as a vary special upper bound on the  dimension of the ``stratum''
where the fibers are $k$-dimensional} for every $k\geq 0.$
Observe that $f$ semismall implies that $f$ is generically finite, i.e. that $S_0$ is open and dense. Observe that $f^{-1}(S_0)\times_{S_0}
f^{-1} (S_0)$ has dimension $\dim{X}$. Give examples of semismall maps where $X\times_Y X$
has at least two irreducible components of dimension~$\dim{X}$.}
\end{exe}

\begin{exe}\la{semiperv} {\rm (\textbf{Semismal maps and perverse sheaves})}
 {\rm A proper map ${f:X\to  Y}$ with $X$ nonsingular  is semismall iff $f_* \rat_X[\dim{X}]$ is perverse on~$Y$.}
 \end{exe}

\begin{exe}\la{corelt} {\rm (\textbf{Relevant locally constant sheaves}) Let $S_a \in A_{rel}$ be relevant. Show that $R^{\dim{X} - \dim{S_a}}$ is locally constant on $S_a$
with stalks $H^{2 \dim f^{-1} (s)} (f^{-1}(s))$. The monodromy of this locally constant sheaf,
denoted by $L_a$,
factors through the finite group of symmetries of the set of
irreducible components of maximal dimension $\frac{1}{2}(\dim{X} - \dim{S_a})$
of a typical fiber $f^{-1}(s).$ (Note that, a priori, the monodromy could
send the fundamental class of such a component to minus itself, thus contradicting
the claim just made; that this is not the case follows, for example, from a theorem  of Grothendieck's  in EGA IV, 15.6.4;
 see the nice general  discussion in B.C. Ngo's paper arxiv0801.0446v3, \S7.1.1.)
 In particular, $L_a$ is semisimple.  Note that
if we switch from $\rat$-coefficients to $\overline{\rat}$-coefficients simple objects
may split further. Do they stay semisimple?}
\end{exe}

\begin{exe}\la{corr}{\rm (\textbf{Formalism of correspondences in products})} {\rm
 Unwind the isomorphisms (\ref{cosio}) to deduce
that under them,  a class $\Gamma \in H_\gamma(X\times X)$
defines  a linear map $\Gamma_*: H^{*}(X) \to H^{*+\gamma - 2\dim{X}}(X)$
given by ${a \mapsto PD({pr_2}_*( pr_1^* a \cap \Gamma))}$. Conversely, show that any graded linear map
$H^*(X) \to H^{*+\gamma - 2\dim{X}}(X)$ is given by a unique such $\Gamma \in H_{\gamma}(X\times X).$}
\end{exe}

\begin{exe}{\rm(\textbf{Sheaf theoretic definition of Borel-Moore homology})
Let $X$ be a variety. Recall (one) definition of Borel-Moore homology: embed $X$ as a closed subvariety of smooth variety $Y$, then set
\[ H^{BM}_i (X) := H^{2\mathrm{dim}\, Y -i}(Y, Y -X), \]
where the right-hand-side is relative cohomology. By interpreting relative cohomology sheaf theoretically, give a sheaf theoretic definition of Bore-Moore homology. 

\n
(Hint: consider the distinguished triangle $i_*i^! \to \mathrm{id} \to Rj_*j^* \to$, if you apply this to the constant sheaf and take cohomology (or equivalently push to a point), what long exact sequence do you get?)
}
\end{exe}

\begin{exe}{\rm(\textbf{Borel-Moore homology and Ext/convolution algebras})
Fix a pro\-per morphism of varieties $\pi\colon X \to Y$, with $X$ smooth. Form a cartesian square
\[ \xymatrix{
Z \ar[r]^-{p_1}\ar[d]_-{p_2} & X\ar[d]^-{\pi} \\
X\ar[r]^-{\pi} & Y
}\]
For sheaves $A, B$ on a variety $Z$, let $Ext^i(A,B) = Hom_{D^b(Z)}(A,B[i])$. I.e., $Ext^{*}$ denotes (shifted) $Hom$ in the derived category. Show the following:
\begin{enumerate}
\item $Ext^{*}(R\pi_* \mathbb{Q}, R\pi_*\mathbb{Q}) = Ext^{*}(\mathbb{Q}, \pi^! R\pi_*\mathbb{Q})$. (Hint: adjunction property).
\item $Ext^{*}(\mathbb{Q}, \pi^! R\pi_*\mathbb{Q}) = Ext^{*}(\mathbb{Q}, Rp_{2*}p_1^! \mathbb{Q})$. (Hint:  proper base change).
\item $Ext^{*}(\mathbb{Q}, Rp_{2*}p_1^! \mathbb{Q}) = Ext^{*}(\mathbb{Q}, p_1^!\mathbb{Q})$ (Hint: push-forward and hom).
\item $Ext^{*}(\mathbb{Q}, p_1^!\mathbb{Q}) = H^{BM}_{2\mathrm{dim}\, X - *}(Z)$. (Hint: use the sheaf-theoretic definition of Borel-Moore homology; the dimensional shift suggests the use of some kind of duality).
\end{enumerate}
}
\end{exe}

\begin{exe}\la{5050} {\rm (\textbf{Geometric basis for $H^{BM}_{2 \dim X} (X \times_Y X)$ when $f$ is semismall}) 
Show that if   $f$ is semismall, then the rational vector space $H^{BM}_{2 \dim X} (X \times_Y X)$ has a basis
formed by the fundamental classes of the irreducible components of $X \times_Y X$ of maximal dimension
$ \dim X$. Describe these irreducible components in terms of monodromy over the relevant strata.}
\end{exe}

\begin{exe}\la{iths} {\rm (\textbf{Hodge-theoretic decomposition theorem for $f$ semismall})
 Show that  if $f: X \to Y$ is semismall
with $X$ projective nonsingular, then the decomposition $H^*(X,\rat)=
\oplus_{a \in A_{rel}} IH^{*- {\rm codim}(S_a)}(\overline{S_a}, L_a)$ is one pure Hodge structures (i.e. compatible
with the Hodge $(p,q)$-decomposition of $H^*(X,\comp).$ (Hint: the projectors
onto the direct sums are given by algebraic cycles.)
}
\end{exe}

\section{Lecture 4: DT symmetries: VD, RHL. \texorpdfstring{$\m{IC}$}{IC} splits off}\la{symm}

\textbf{Summary of Lecture 4.}  {\em Discussion of the two main symmetries in the decomposition theorem
for projective maps: Ver\-dier duality and the relative hard Lefschetz theorem. Hard Lefschetz in intersection cohomology and
Stanley's theorem for  rational simplicial polytopes. A proof that the  intersection complex of the image is always a direct summand.
Pure Hodge structures on the intersection cohomology of a  projective surface.}

\begin{rmk}\la{manc}{\rm
We are going to discuss two symmetries for projective  maps: Ver\-dier duality and the relative Hard Lefschetz
theorem. Both these statements have to do with  the  direct image perverse sheaves. In fact, if the target is projective, then
we can take the shadow of these two symmetries in cohomology and notice that there are two additional symmetries: the Verdier duality
and Hard Lefschetz theorem on the individual summands $IH^*(S,L)$. 
Exercise~\ref{moresymme} asks you to make an explicit list in a low-dimensional case.
}
\end{rmk}

\subsection{Verdier duality and the decomposition theorem}\la{pvd}

\textbf{Verdier duality and the decomposition theorem.} 
Recall the statement  (\ref{eq2}) of the decomposition theorem
\[
Rf_* \m{IC}_X(M) \cong \bigoplus_{q\geq 0, \m{EV}_q} \m{IC}_S(L) [-q].
\]

\textbf{Switching to the perverse intersection complex.}
If $(S,L)$ is an enriched variety, then  irreducible, then  $\m{IC}_S (L)$ is not  a perverse sheaf. Recall (\ref{icvsic}):
the perverse object is $IC_S(L):= \m{IC}_S (L) [\dim{S}]$. In order to emphasize better  certain symmetric aspects of the decomposition theorem, we switch to
the perverse intersection complex.
This entails a   a minor headache when re-writing  (\ref{eq2}), which becomes (verify it as an exercise)
\beq\la{eq2bis}
Rf_* IC_X(M) \cong \bigoplus_{b\in \zed} \bigoplus_{EV_b} {IC}_S(L) [-b],\,\,
\eeq
where $(S,L) \in EV_b$ iff $(S,L) \in \m{EV}_{b+\dim{X} -\dim{S}}$.

\textbf{The perverse cohomology sheaves of the derived direct image.} Note that, now, 
every $b$-th direct summand above is a perverse sheaf, so that, in  view of Exercise~\ref{okn}, we have  that:
\beq\la{vbnt}
\pc{b}{Rf_* IC_X(M)} = \oplus_{EV_b} IC_S(L).
\eeq

\textbf{The case when $M$ is self-dual (and semisimple).}
Let $M$ be self-dual, e.g. a constant sheaf, a polarizable variation of pure Hodge structures, or even the direct sum of any $M$ with its dual; self-dual local systems appear frequently in complex algebraic geometry.
Then so  is  $IC_X(M)$ and, by  the duality exchange property for proper maps, so is $Rf_* IC_X(M).$
In view of the duality relation (\ref{0987}) between  perverse cohomology sheaves,  we see that
$\pc{b}{Rf_* IC_X(M)}) \cong \pc{-b}{Rf_* IC_X(M)})^\vee.$ By combining with (\ref{vbnt}), we get
\beq\la{mki9}
Rf_* IC_X(M) \cong 
  \biggl(\bigoplus_{\substack{b<0\\[2pt]EV_{b}}} IC_S(L) [-b]\biggr)
 \oplus
\biggl(\bigoplus_{EV_{0}} IC_S(L) \biggr) \oplus \biggl(\bigoplus_{\substack{b<0\\[2pt]EV_{b}}} IC_S(L^\vee) [b]\biggr).
\eeq
In other words, the direct image is palindromic, i.e. it reads the same, up to shifts and dualities,  from right to left and from left to right. Just like the cohomology of a compact oriented manifold!

\textbf{The defect of semismallness.}
When trying to determine the precise shape of the decomposition theorem, one important
invariant is the minimal  interval $[-r,r]$ out of which the perverse cohomology sheaves are zero.
In this direction, we have that if $M$ is constant and $X$ is nonsingular, then $r=\dim X\times_Y X - \dim X\geq 0$.
This difference is called the {\em defect of semismallness} in \ci{decmightam}. In this situation, $r=0$ iff the map is semismall.

\subsection{Verdier duality and the decomposition theorem with large fibers}\la{vdlf}
Here is a nice  consequence of Verdier duality, more precisely of (\ref{mki9}). It is an observation due to Goresky and MacPherson and it is used
by B.C. Ng\^{o} in his proof of the support theorem, a key technical and geometric result in his proof
of the fundamental lemma in the Langlands' program. See \ci{ngo}, \S7.3.

\begin{tm}\la{gt6} 
Let $f: X\to Y$ be proper with $X$ nonsingular and equidimensional fibers of dimension $d$. Assume a subvariety
$S$ appears in the decomposition theorem
(\ref{eq2}) for $Rf_*\rat_X.$ Then ${\rm codim} (S) \leq d.$
\end{tm}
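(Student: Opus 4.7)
The plan is to combine Verdier self-duality of the direct image complex with a stalk computation at a generic point of $S$ via proper base change. Since $X$ is nonsingular, $IC_X = \rat_X[\dim{X}]$ is Verdier self-dual, and since $f$ is proper, Verdier duality commutes with $Rf_*$, so $C := Rf_*\rat_X[\dim{X}]$ is self-dual. By the perverse cohomology exchange formula (\ref{0987}), this gives $\pc{b}{C} \cong \pc{-b}{C}^\vee$. Consequently, the set of integers $b$ at which a given closed irreducible subvariety $S \subseteq Y$ appears as the support of a direct summand of $\pc{b}{C}$ is symmetric about $0$; in particular, if $S$ appears at all, it appears at some $b_0 \geq 0$.

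Fix such a $b_0$ and a direct summand $IC_S(L)$ of $\pc{b_0}{C}$ with $L \neq 0$, so that $IC_S(L)[-b_0]$ is a direct summand of $C$. At a general smooth point $s \in S$ (away from the lower strata of $S$), the intersection complex $IC_S(L)$ coincides with $L[\dim{S}]$, so the unique nonzero cohomology stalk of $IC_S(L)[-b_0]$ at $s$ is $L_s \neq 0$, placed in cohomological degree $b_0 - \dim{S}$. Because it occurs as a \emph{direct} summand, this contribution cannot be cancelled by any of the other summands, and therefore $\m{H}^{b_0 - \dim{S}}(C)_s \neq 0$.

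On the other hand, proper base change identifies $\m{H}^i(C)_s$ with $H^{i + \dim{X}}(f^{-1}(s),\rat)$, which vanishes once $i + \dim{X} > 2d$ because $f^{-1}(s)$ has dimension $d$. Applied to $i = b_0 - \dim{S}$, this yields $b_0 \leq 2d - \dim{X} + \dim{S}$. Using $\dim{X} = \dim{Y} + d$ (valid after replacing $Y$ by $\overline{f(X)}$ if $f$ is not surjective, so that codimension means codimension in the image), this rewrites as
\[ b_0 \leq d - {\rm codim}(S). \]
Since $b_0 \geq 0$, we conclude that ${\rm codim}(S) \leq d$. The only subtle conceptual point, and the step most likely to require care, is the use of Verdier self-duality to promote the naive one-sided stalk bound into an inequality in the useful direction; once this symmetry is available, the rest reduces to bookkeeping with proper base change and the standard description of $IC_S(L)$ at a smooth point of $S$.
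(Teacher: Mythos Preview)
Your argument is correct and is essentially the paper's own proof: use Verdier self-duality of $Rf_*\rat_X[\dim X]$ to force $S$ to appear at some nonnegative perversity $b_0\ge 0$, restrict to a generic point of $S$ (the paper localizes to an open set $U$ where $S^o$ is closed and reads off a nonzero direct summand of $R^{\dim X-\dim S+b_0}f_*\rat_X$, which via proper base change is exactly your stalk computation), and then bound using $R^{>2d}f_*\rat_X=0$ together with $\dim X=\dim Y+d$. The only cosmetic difference is that the paper takes $b^+_S$ to be the \emph{maximum} index at which $S$ appears, whereas you pick any $b_0\ge 0$; either choice suffices for the inequality.
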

\begin{proof}
There is a maximum index $b^+_S \in \zed$ for which  a term $IC_S(L)[-b^+_S]$ appears. By the palindromicity
(\ref{mki9}), we may assume that $b^+_S \geq 0.$ Recall that $L$ is defined on some open
dense $S^o \subseteq S.$ Let $U\subseteq Y$ be open such that its trace on $S$ is $S^o.$
Replace $Y$ with $U$. Denote by $i:S^o\to Y$ the closed embedding. Then $Rf_*\rat [\dim X]$ admits $i_*L[\dim{S}] [-b^+_S]$ as a direct summand.
Then $i_* L$ is a non trivial direct summand of  $R^{\dim{X} -\dim {S} +b^+_S} f_* \rat.$ Since the fibers have dimension $d$ and $b^+_S\geq 0,$ 
we have tha
$\dim{X} -\dim{S} \leq \dim{X}-\dim{S} +b^+_S \leq 2d.$ Since $\dim{X} = \dim{Y} +d$, the conclusion follows.
\end{proof}

\subsection{The relative hard Lefschetz theorem}\la{rhls}

\textbf{Poincar\'e duality vs. hard Lefschetz.}
Let $X$ be a projective manifold and let $\eta \in H^2(X,\rat)$ be the class of a hyperplane section.
Then we have two separate phenomena: 
\[
H^{\dim{X} -r} (X,\rat) = H^{\dim{X}+r}(X,\rat)^\vee  \quad \mbox{(Poincar\'e duality)},
\] 
\[\eta^{r}: H^{\dim{X} -r} (X,\rat) \cong H^{\dim{X}+r}(X,\rat)^\vee
\mbox{(hard  Lefschetz)}.\]
The first statement is that the pairing $\int_X -\wedge -$ between cohomology in complementary degrees is non-degenerate.
The second one is that the pairing $\int_X \eta^r -\wedge -$ on  a cohomology group $H^{\dim{X}-r}$  is non-degenerate.
They both imply the usual
symmetry of Betti numbers. The latter implies  also their unimodality, i.e. that
$b_{d} \geq b_{d-2} \geq \ldots$, where $d$=$ is either $$\dim{X}$ or$ \dim{X}-1$  (compare with the Hopf surface, which has the former, but not the latter).

Exercise~\ref{hl} discusses two proofs of the classical hard Lefschetz theorem. Both proofs generalize and, with some work, afford proofs of the relative hard Lefschetz theorem. In particular, they yield proofs of the hard Lefschetz theorem in intersection cohomology.

We have mentioned how, in the context of singular varieties,  Poincar\'e duality in cohomology is lost but
found again in intersection cohomology.  The same is true for the hard Lefschetz theorem for the intersection cohomology of 
projective varieties! This brings us back to the
theme harping the importance of the derived category and of perverse sheaves: the statement of the hard Lefschetz for intersection cohomology is cohomological, but there is no known proof that avoids perverse sheaves.

Let $f: X \to Y$ be a map of varieties and let $\eta \in H^2(X,\rat)$ be a cohomology class.
It is a general fact that 
 $\eta$ induces, $\eta: C \to C[2]$,
which induces  $\eta: Rf_*C
\to Rf_*C[2]$, which, by taking perverse cohomology sheaves,   induces $\eta: \pc{b}{C} \to \pc{b+2}{C}$.
It follows that, for every $b \geq 0$,  we obtain maps $\eta^{b}: \pc{-b}{C} \to \pc{b}{C}$ in $P(Y)$.

Let  $f:X \to Y$ be a projective map with  and let $IC_X(M) \in P(X)$. Let  ${\eta \in H^2(X,\rat)}$ be the first Chern class of
an $f$-ample line bundle, i.e.  a line bundle
on $X$ which is ample on every fiber of $f$. It is a general fact that 
 $\eta \in H^2(X,\rat)$ induces, $\eta: IC_X(M)
\to IC_X(M)[2]$, which induces  $\eta: Rf_*IC_X(M)
\to Rf_*IC_X(M)[2]$, which, by taking perverse cohomology sheaves,   induces $\eta: \pcs^b \to \pcs^{b+2}$.
It follows that, for every $b \geq 0$,  we obtain maps $\eta^{b}: \pcs^{-b} \to \pcs^b$ in~$P(Y)$.

\begin{tm}\la{rhl}{\rm (\textbf{Relative hard Lefschetz} \cites{bbd,samhm,decmightam,sabbah,mochizuki})} 
Let $f: X \to Y$  be a projective map and let $\eta \in H^2(X,\rat)$ be the first Chern class of an $f$-ample
line bundle on $X$\footnote{Here is one such line bundle:
since $f$ is projective, there is a factorization of $f$ as follows $X\to Y \times  \bb{P} \to  Y$ (closed embedding, followed by the projection); pull-back the hyperplane bundle
from $\bb{P}$ to $Y \times \bb{P}$ and restrict to $X$}.  Let  $IC_X(M)$ be  semisimple, i.e. $X$ irreducible and $M$ semisimple. For every $b\geq 0$ the iterated cup product map
$\eta^b: \pc{-b}{Rf_* IC_X(M)} \to \pc{b}{Rf_* IC_X(M)}$ is an isomorphism.
\end{tm}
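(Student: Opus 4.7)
The plan is to use two ingredients: (i) the semisimplicity of the perverse cohomology sheaves $\pc{\pm b}{Rf_* IC_X(M)}$, which is supplied by the decomposition Theorem~\ref{dt01}, and (ii) the classical hard Lefschetz theorem applied fiberwise over a Zariski dense open subset of $Y$.

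First, I would verify that $\eta^b$ is a morphism in the abelian category $P(Y)$: viewing $\eta \in H^2(X,\rat) = \mathrm{Hom}_{D(X)}(\rat_X, \rat_X[2])$, cup product gives a map $\eta: IC_X(M) \to IC_X(M)[2]$ in $D(X)$; applying $Rf_*$, iterating $b$ times, and taking the $(-b)$-th perverse cohomology functor (using the shift identity~(\ref{efbt}), $\pc{-b}{C[2b]} = \pc{b}{C}$) produces the asserted map. Next, I would choose a Zariski dense open $Y^o \subseteq Y$ over which $f$ is smooth, $M|_{f^{-1}(Y^o)}$ is a local system underlying a polarizable variation of pure Hodge structures, and every support $S$ appearing in the simple decomposition of $\pc{\pm b}{Rf_* IC_X(M)}$ that is not contained in $Y \setminus Y^o$ meets $Y^o$. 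Over $Y^o$, Deligne's Theorem~\ref{detm} yields $Rf_* IC_X(M)|_{Y^o}$ as a direct sum of shifted local systems, and the stalk of $\eta^b$ at $y \in Y^o$ coincides, up to the standard shift, with the iterated cup product by the restriction of the $f$-ample class on the smooth projective fiber $f^{-1}(y)$. The classical hard Lefschetz theorem with twisted coefficients then shows $\eta^b|_{Y^o}$ is an isomorphism in $P(Y^o)$.

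Finally, I would upgrade the isomorphism over $Y^o$ to all of $Y$ via semisimplicity. Decomposing $\pc{\pm b}{Rf_* IC_X(M)}$ into simple summands of the form $IC_S(L)$, the Schur-type vanishing $\mathrm{Hom}(IC_S(L), IC_{S'}(L')) = 0$ for non-isomorphic simples ensures that $\eta^b$ respects the isotypical decomposition by support $S$. On summands with $S \cap Y^o \neq \emptyset$, the full faithfulness of the intermediate extension functor $j_{!*}$ on simple objects together with the isomorphism over $Y^o$ promotes $\eta^b$ to an isomorphism between these summands. For summands with $S \subseteq Y \setminus Y^o$, I would invoke Verdier self-duality of $Rf_* IC_X(M)$ (the self-dual case, to which the general semisimple case reduces by replacing $M$ by $M \oplus M^\vee$), which by~(\ref{0987}) gives $\pc{b}{Rf_* IC_X(M)} \cong \pc{-b}{Rf_* IC_X(M)}^\vee$ and exhibits $\eta^b$ as a morphism of pure objects of the same weight in the sense of BBD, M. Saito, or Mochizuki-Sabbah; purity plus semisimplicity then forces $\eta^b$ to be an isomorphism on these boundary summands as well. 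The main obstacle is precisely this last step: the ``boundary'' summands supported in $Y \setminus Y^o$ are invisible to the fiberwise hard Lefschetz, and inverting $\eta^b$ there is not formal — it genuinely requires the deep purity input from the theory of weights, mixed Hodge modules, or tame harmonic bundles that also underlies the semisimplicity in Theorem~\ref{dt01}.
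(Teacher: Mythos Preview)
The paper does not give its own proof of Theorem~\ref{rhl}; it is stated with references to \cite{bbd,samhm,decmightam,sabbah,mochizuki} and treated as a deep input. The only hint the paper offers toward a proof is the remark that ``the semisimplicity of the perverse direct images is an essential ingredient in the proof of the relative hard Lefschetz theorem'' together with the pointer to Exercise~\ref{hl}, whose two options (Hodge--Riemann relations, or Lefschetz pencils plus semisimplicity of monodromy) are the ones that, suitably relativized, actually drive the proofs in the cited references. Those proofs proceed by induction on dimension via a relative hyperplane section, reducing the critical case (the analogue of $r=1$ in Exercise~\ref{hl}) to a non-degeneracy statement that is then settled using semisimplicity and a polarization/weight argument.

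Your strategy is different: you try to read off RHL directly from the decomposition Theorem~\ref{dt01} by matching simple summands. The part over $Y^o$ is fine in spirit, but note that once $f$ is smooth over $Y^o$, the perverse cohomology sheaves there are shifted local systems on all of $Y^o$, so the \emph{only} summands $IC_S(L)$ with $S\cap Y^o\neq\emptyset$ are those with $S=Y$; every summand with $S\subsetneq Y$ is a ``boundary'' summand. Thus your fiberwise argument handles exactly the $S=Y$ piece (and even there you are invoking hard Lefschetz with twisted semisimple coefficients on the fibers, which is itself a nontrivial theorem). For all remaining summands your last paragraph is not a proof: semisimplicity and self-duality tell you the source and target of $\eta^b$ are abstractly isomorphic, but a map between isomorphic simple perverse sheaves can perfectly well be zero, and ``purity plus semisimplicity forces $\eta^b$ to be an isomorphism'' is an assertion, not an argument. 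This is precisely the place where the inductive hyperplane-section scheme of \cite{bbd,decmightam} (mirroring Exercise~\ref{hl}) does real work that your outline does not supply. You correctly flag this as the main obstacle; it is in fact the whole difficulty.
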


\textbf{Hard Lefschetz for the intersection cohomology of projective varieties.}
The special case when $Y$ is a point  and $M$ is constant yields the hard Lefschetz theorem for the intersection cohomology
groups of a projective variety.

The same Deligne-Lefschetz criterion  employed in the proof of the derived Deligne theorem, allows to deduce
formally a first approximation to the decomposition theorem (this argument does not afford  the semisimplicity
part of the decomposition  theorem)
\beq\la{ghjuy}
Rf_* IC_X(M) \cong \bigoplus_b \pc{b}{Rf_* IC_X(M)}[-b].
\eeq

Exercise~\ref{pld} gets you a bit more acquainted with the primitive Lefschetz decompositions.
Exercise~\ref{hl} draws a parallel between the classical inductive approach to the Hard Lefschetz theorem
 via the Lefschetz hyperplane section theorem and the semisimplicity of monodromy
for the family of hyperplane sections; see Deligne's second paper on the Weil Conjectures \ci{weil2}, \S4.1.

\subsection{Application of RHL: Stanley's theorem}\la{fsex}
An excellent reference is \ci{stanley}. For  more details, see \ci{bams}.

A convex polytope is the convex hull of a finite set in real Euclidean space. It is said to be simplicial
if all its faces are simplices. Example: a triangle. Non-example: a square. Example: two square-based pyramids joined at the bases.
Let $P$ be a $d$-dimensional simplicial convex polytope with $f_i$ $i$-dimensional faces, $0 \leq i \leq d-1.$
The $f$-vector ($f$ for faces) of $P$ is the vector $f(P)=(f_0, \ldots, f_{d-1})$. The $h$-vector of $P$  is
defined by setting $h(P)= (h_0, \ldots, h_d)$ with
\[
h_i = \sum_{j=0}^i \binom{d-j}{d-i} (-1)^{i-j} f_{j-1} \quad (f_{-1}:=1)
\]
The $f$ and $h$-vectors determine each other.

\begin{??}\la{qpoly}When is a vector $(f_0, \ldots, f_{d-1})$ an  $f(P)$-vector for some $P$?
\end{??}

\textbf{(A reformulation of) P. McMullen's 1971 conjecture}.  (Of course, to  give $f$ is the same as  giving $h$.)
A vector $f$ is an $f(P)$-vector for some $P$ iff  1) ${h_{i} = h_{d-i}}$ and 2) there is a graded commutative  $\rat$-algebra $R=\oplus_{i\geq 0} R_i$,
with $R_0=\rat$,
generated by $R_1$, and with $\dim{R_i}=h_{i} - h_{i-1}$, $ 1 \leq i \leq \lfloor d/2 \rfloor$. In particular, ${h_0 \leq h_1\leq \ldots
\leq h_d}$.

\textbf{Associated simplicial toric variety.}
Stanley himself writes: ``we are led to suspect the existence of a smooth $d$-dimensional projective variety $X(P)$ for which
(the Betti numbers) $b_{2i}=h_i$'', for which $H^{even}(X,\rat)$ is generated by $H^2(X,\rat)$  and we can take
for $R:= H^{even}(X,\rat)/(\eta)$ (quotient by the ideal generated by hyperplane class). 

Then 1) above  would be Poincar\'e duality and 2) would be a direct consequence of hard Lefschetz on the smooth projective $X$
(unimodality).

Let me describe briefly what is going on. 

The combinatorial data of the simplicial $P$ gives rise to a simplicial toric {variety~$X(P)$}. 

Saying that $P$ is simplicial   means that $X(P)$, while possibly   singular,  has singularities  of the type ``vector space modulo a finite group''.

\textbf{Necessity of the conditions.}
It is a fact that $H^*(X(P), \rat)= H^{even}(X(P), \rat)$ and that $b_{2i} =h_i$.
The basic idea is that: faces give rise to torus orbits; torus orbits assemble into cells with the shape of
affine spaces modulo finite groups;  then $X(P)$ is a disjoint union of  such cells;  since the are  cells
automatically of  even real dimension, the cohomology
has graded bases  labelled by these cells; the only issue is to count these cells properly;   this is indeed the explanation
of the relation $f(P) \leftrightarrow h(P)$ (in the simplicial case).

Exercise~\ref{5678} tells us that  $\m{IC}_{X(P)} = \rat_{X(P)}$.
It follows that $H^*(X(P), \rat)$  satisfies Poincar\'e duality. We thus get  the necessity of 1) in McMullen's conjecture.

The necessity of 2) would follow if we knew the hard Lefschetz theorem for $H^*(X(P), \rat)$.
But we do, because we know it for the intersection cohomology groups, which, by $\m{IC} =\rat$,
are the cohomology groups.

We thus have.

\begin{tm}\la{mcbils}
{\rm (\textbf{Simplicial polytopes: iff for $f$ being an $f(P)$ vector})}
The McMullen conditions are necessary (Stanley: discussion above) and sufficient (Billera and Lee: construction).
\end{tm}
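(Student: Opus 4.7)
I will prove necessity and sufficiency separately. For necessity, the plan is to build a simplicial toric variety $X(P)$ from the normal fan of $P$ and extract the Hodge--Lefschetz package for $H^*(X(P),\rat)$ from what is available for intersection cohomology. For sufficiency, I will appeal to the explicit Billera--Lee construction, which I will outline.

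\textbf{Necessity.} First I build the projective toric variety $X(P)$ associated with (the normal fan of) $P$; because $P$ is simplicial, the fan is simplicial, so all cones are generated by linearly independent rays and $X(P)$ has only finite-quotient singularities, i.e. it is locally isomorphic to $\comp^d$ modulo a finite group action. The first key step is to observe that for such quotient singularities the shifted constant sheaf $\rat_{X(P)}[d]$ is already Verdier self-dual and satisfies the support/cosupport conditions, so by Exercise~\ref{5678} we have $\m{IC}_{X(P)}=\rat_{X(P)}$ (equivalently $IC_{X(P)}=\rat_{X(P)}[d]$). Consequently $IH^*(X(P),\rat)=H^*(X(P),\rat)$, and this identification is compatible with cup-products and with hard Lefschetz by functoriality. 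Next I use the Bia\l ynicki-Birula--style cell decomposition coming from a generic one-parameter subgroup of the torus: the orbit stratification by torus orbits refines to a decomposition into complex cells (quotients of affine spaces by finite groups) indexed by the faces of $P$, with a cell of complex dimension $d-i$ for each $(i-1)$-face. A direct counting argument converts the $f$-vector to the $h$-vector and shows $b_{2i}(X(P))=h_i$, while $b_{\text{odd}}=0$.

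Now I invoke the two symmetries for intersection cohomology of a projective variety, both of which are known in this generality by the results reviewed in the preceding sections (Poincar\'e duality for $\m{IC}$, which is immediate from Verdier self-duality of $IC_{X(P)}$; and the hard Lefschetz theorem in intersection cohomology, which is the special case of Theorem~\ref{rhl} when the target is a point and $M$ is constant). Poincar\'e duality gives $h_i=b_{2i}=b_{2(d-i)}=h_{d-i}$, which is condition (1). Hard Lefschetz with an ample class $\eta\in H^2(X(P),\rat)$ provides isomorphisms $\eta^{d-2i}\colon H^{2i}\xrightarrow{\cong}H^{2(d-i)}$ for $0\le i\le\lfloor d/2\rfloor$ and the corresponding primitive Lefschetz decomposition. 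Setting $R:=H^{\mathrm{even}}(X(P),\rat)/(\eta)$, the quotient is a graded commutative $\rat$-algebra with $R_0=\rat$, generated in degree $1$ (since the toric $H^{\mathrm{even}}$ is generated by $H^2$, a fact one can either import from the toric literature or extract from the cellular description), and the primitive decomposition yields $\dim R_i=h_i-h_{i-1}$ for $1\le i\le\lfloor d/2\rfloor$. Together these give condition (2) and in particular the unimodality $h_0\le h_1\le\cdots\le h_{\lfloor d/2\rfloor}$. I expect the delicate step here to be pinning down $\m{IC}_{X(P)}=\rat_{X(P)}$ together with the identification of cellular Betti numbers with $h_i$; everything else is then a clean consequence of the perverse-sheaf machinery already developed.

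\textbf{Sufficiency.} Here there is no sheaf-theoretic input: I will describe the Billera--Lee construction. Given a vector $(h_0,\dots,h_d)$ satisfying conditions (1) and (2), one first uses an $M$-sequence presentation (guaranteed by Macaulay's theorem applied to condition (2)) to encode the $h$-vector combinatorially. Billera and Lee then exhibit an explicit simplicial $d$-polytope $P$ realizing this $h$-vector by building a shellable simplicial $(d-1)$-sphere as the boundary complex of a cyclic-polytope-like construction and then realizing it geometrically by perturbing vertices of a cyclic polytope on a suitable moment curve; the combinatorial bookkeeping of shellings matches the prescribed $h$-vector term by term. Because this step is purely combinatorial and disjoint from the perverse-sheaf story of these lectures, I would only sketch it and refer to the Billera--Lee paper for the detailed verification. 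The main conceptual obstacle of the entire theorem is the hard Lefschetz theorem for intersection cohomology used in necessity; in these lectures that statement is imported from Theorem~\ref{rhl}, so the remaining work is essentially the toric translation carried out above.
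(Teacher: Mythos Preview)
Your proposal is correct and follows essentially the same route as the paper: build the simplicial toric variety $X(P)$, use Exercise~\ref{5678} to identify $\m{IC}_{X(P)}=\rat_{X(P)}$, read off $b_{2i}=h_i$ from the orbit/cell description, and then extract condition~(1) from Poincar\'e duality and condition~(2) from hard Lefschetz in intersection cohomology (Theorem~\ref{rhl} over a point), while deferring sufficiency to Billera--Lee. Your added detail on the Bia\l ynicki-Birula decomposition and the $M$-sequence bookkeeping goes slightly beyond what the paper spells out, but the architecture of the argument is the same.
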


\subsection{Intersection  cohomology of the target as direct summand}\la{dr43}
\textbf{A funny situation.}
 Singular cohomology is functorial, but, in general, $f^*$ is not injective, not even if $f$ is proper\footnote{
It is injective when  $f$is  proper of algebraic manifolds (trace map)}. On the other hand,
 intersection cohomology is not functorial,
  but for proper surjective  maps, the DT exhibits the 
 intersection cohomology of the target as a direct summand of the intersection cohomology
 of the source. 

The following theorem is one of the most striking  and useful applications of the decomposition  theorem. It is usually
used, stated and proved in the context of proper birational maps. The proof
in the presence of generic large fibers is not more difficult. In fact, we  give a proof as it is also a chance to
meet and use some very useful general principles of the theory we have been talking about in these lectures.

\begin{tm}\la{icdr}{\rm (\textbf{Intersection complex as a direct summand})}
Let $f: X \to Y$ be a proper map of irreducible varieties with image $Y'$. Then $IH^*(Y')$
is a direct summand of $IH^*(X)$. More precisely, $\m{IC}_{Y'}$ is a direct summand
of $f_* \m{IC}_X.$
\end{tm}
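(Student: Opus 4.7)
The plan is to apply the decomposition theorem to $f$ and to isolate $\m{IC}_{Y'}$ among the resulting summands by a generic computation over a dense smooth open of $Y'_{\text{reg}}$.

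First, I would reduce to the case where $f$ is surjective: factor $f$ as $X \xrightarrow{\bar f} Y' \xrightarrow{i} Y$ with $\bar f$ proper surjective and $i$ the closed embedding of the image. Since $i_*$ is fully faithful on constructible complexes supported on $Y'$, the question transports to $\bar f$. Assume henceforth that $f\colon X \to Y$ is proper surjective with $X, Y$ irreducible, and set $d := \dim X - \dim Y$. The decomposition theorem (Theorem~\ref{dt01}) writes $Rf_* IC_X$ as a direct sum of shifts of $IC_S(L)$'s on closed subvarieties $S \subseteq Y$, with each perverse cohomology sheaf $\pc{b}{Rf_* IC_X}$ semisimple. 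After accounting for the shift $IC_{(-)} = \m{IC}_{(-)}[\dim(-)]$, the conclusion "$\m{IC}_Y$ is a direct summand of $Rf_* \m{IC}_X$" becomes "$IC_Y$ is a direct summand of $\pc{-d}{Rf_* IC_X}$". Because $IC_Y$ is the intermediate extension $j_{!*}(\rat_U[\dim Y])$ from any smooth dense open $j\colon U \hookrightarrow Y_{\text{reg}}$, and $\pc{-d}{Rf_* IC_X}$ is semisimple, it suffices to produce one such $U$ over which $\rat_U[\dim Y]$ appears as a direct summand of $\pc{-d}{(Rf_* IC_X)|_U}$.

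The smooth case: suppose first that $X$ is nonsingular, so $\m{IC}_X = \rat_X$. By generic smoothness I can shrink $U$ so that $f|_{f^{-1}(U)}\colon f^{-1}(U) \to U$ is a smooth proper map of relative dimension $d$. Deligne's Theorem~\ref{detm} then gives $(Rf_* \rat_X)|_U \cong \bigoplus_q R^q f_* \rat_{f^{-1}(U)}[-q]$, each summand a shifted semisimple local system. The local system $R^0 f_* \rat_{f^{-1}(U)}$ has stalks $H^0(f^{-1}(u), \rat) \cong \rat^{|\pi_0(f^{-1}(u))|}$ with monodromy acting by permutation of connected components; the "sum of all components" direction is monodromy-invariant, so by semisimplicity $\rat_U$ splits off as a direct summand. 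Among the Deligne summands, only the $q=0$ piece lies in perverse degree $-d$, producing the desired direct summand $\rat_U[\dim Y] = IC_Y|_U \subseteq \pc{-d}{(Rf_* IC_X)|_U}$.

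For general (possibly singular) $X$, take a resolution of singularities $\pi\colon \tilde X \to X$ and set $\tilde f = f \circ \pi$. Applying the smooth-case argument to $\pi$ itself (which is an isomorphism over $X_{\text{reg}}$, so its generic fibers are single points) splits off $\m{IC}_X$ in $R\pi_* \m{IC}_{\tilde X}$, yielding $R\pi_* \m{IC}_{\tilde X} = \m{IC}_X \oplus R_\pi$ with $R_\pi$ supported on $X_{\text{sing}}$. Pushing forward by $f$:
\[R\tilde f_* \m{IC}_{\tilde X} \;=\; Rf_* \m{IC}_X \;\oplus\; Rf_* R_\pi,\]
and the smooth case applied to $\tilde f$ (whose source $\tilde X$ is now smooth) provides $\m{IC}_Y$ as a direct summand of the left-hand side. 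The \textbf{main obstacle} is to show that, by simplicity of $\m{IC}_Y$, this summand sits in the first factor $Rf_* \m{IC}_X$ rather than in the complement $Rf_* R_\pi$. When $f(X_{\text{sing}}) \neq Y$ this is automatic, since $\operatorname{supp}(Rf_* R_\pi) \subseteq f(X_{\text{sing}})$ is a proper closed subvariety while $\m{IC}_Y$ has full support. In the remaining case $f(X_{\text{sing}}) = Y$, I would proceed by Noetherian induction on $\dim X$: apply the theorem inductively to the proper surjective map from (each component of) $X_{\text{sing}}$ to $Y$, whose source has strictly smaller dimension, to control the multiplicity of $\m{IC}_Y$ in $Rf_* R_\pi$, and compare with the total multiplicity on the left, using that the distinguished "sum of components" direction isolated for $\tilde f$ is detected over $\pi^{-1}(X_{\text{reg}})$ and hence arises from the $\m{IC}_X$-summand of $R\pi_* \m{IC}_{\tilde X}$ rather than from $R_\pi$.
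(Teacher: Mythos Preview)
Your reduction to the surjective case and the smooth-source case are correct and in the same spirit as the paper: both localize over a dense open $U\subseteq Y$ and split off $\rat_U$ from the lowest direct image by a trace-type argument.

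The trouble is in your passage to singular $X$. The Noetherian induction you propose does not do what you want: the inductive hypothesis concerns $Rf_*\m{IC}_{X_{\text{sing}}}$, whereas the complement $R_\pi$ in $R\pi_*\m{IC}_{\tilde X}=\m{IC}_X\oplus R_\pi$ is in general \emph{not} built from intersection complexes of the components of $X_{\text{sing}}$ in any way that lets you read off the multiplicity of $\m{IC}_Y$ in $Rf_*R_\pi$. So the multiplicity comparison you sketch has no traction. By contrast, your parenthetical remark at the very end is the actual argument and makes the induction unnecessary: the adjunction $\rat_Y\to R\tilde f_*\rat_{\tilde X}$ factors as $\rat_Y\to Rf_*\rat_X\to Rf_*\m{IC}_X\hookrightarrow R\tilde f_*\rat_{\tilde X}$ (the last inclusion is your chosen splitting for $\pi$; the composite agrees with the adjunction since $\mathrm{Hom}(\rat_X,R\pi_*\rat_{\tilde X})=H^0(\tilde X,\rat)=\rat$ and both maps are the identity over $X_{\text{reg}}$). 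Over $U$ this exhibits $\rat_U$ inside the degree-zero piece of $(Rf_*\m{IC}_X)_{|U}$, and semisimplicity finishes. You should promote this to the main line and drop the induction.

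The paper takes a different and shorter route for singular $X$: rather than resolving, it \emph{normalizes}. By the $IC$ normalization principle one may assume $X$ normal, and then the single sheaf-theoretic fact $\m{H}^0(\m{IC}_X)=\rat_X$ gives $R^0f_*\m{IC}_X=R^0f_*\rat_X$ directly. One then Stein-factorizes $f=h\circ g$ (finite $\circ$ connected fibers), uses $R^0g_*\rat_X=\rat_{Y'}$, shrinks so that the finite $h$ is a covering, and applies the covering-trace splitting. This avoids the resolution, the summand-tracking, and the need to argue that two maps $\rat_X\to R\pi_*\rat_{\tilde X}$ coincide; the price is that it still uses the DT localization principle, as you do.
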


Let us state three general and useful principles. 
Recall that for a given $(S,L)$,  the locally constant sheaf $L$ is only defined on a suitable 
open dense subvariety $S^oS_{reg}$ of $S$ and that one can shrink $S^o$, if necessary.

\begin{fact}\la{locpri}{\rm 
(\textbf{$IC$ Localization Principle})} Let  $IC_S (L) \in P(Y)$, so that $S\subseteq Y$ is closed, and let $U \subseteq Y$
be open. Then  $\m{IC}_S (L)_{|U}  =\m{IC}_{S \cap U} (L_{|S^o\cap U}).$
\end{fact}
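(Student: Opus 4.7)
The strategy is to verify that $\m{IC}_S(L)_{|U}$ satisfies the defining properties of $\m{IC}_{S\cap U}(L_{|S^o \cap U})$ recalled in \S\ref{intco1}, and then to invoke the uniqueness (up to unique isomorphism) clause of that characterization. One may assume $S\cap U \neq \emptyset$, else both sides are zero.

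First, I would recall the characterization: $\m{IC}_S(L)$ is the unique complex $C$ (supported on $S$) satisfying (i) $C_{|S^o} = L$ (or $L[\dim S]$ in the perverse convention), (ii) $\dim \mathrm{supp}\, \m{H}^i(C) \leq -i-1$ for $i\neq -\dim S$, and (iii) the analogous bound on $C^\vee$. I would then check each condition survives restriction along the open immersion $j:U\hookrightarrow Y$. For (i), transitivity of restriction gives $(\m{IC}_S(L)_{|U})_{|S^o \cap U} = L_{|S^o \cap U}$, and $S^o \cap U$ is a dense open smooth subvariety of $S\cap U$ with $\dim(S\cap U)=\dim S$. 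For (ii), open restriction commutes with cohomology sheaves, $\m{H}^i(C_{|U}) = \m{H}^i(C)_{|U}$, and $\mathrm{supp}\,\m{H}^i(C_{|U}) = \mathrm{supp}\,\m{H}^i(C) \cap U$, whose dimension is at most that of $\mathrm{supp}\,\m{H}^i(C)$. For (iii), I would use the key fact that Verdier duality commutes with open restriction: for an open immersion $j$ one has $j^! = j^*$ and $(j^*C)^\vee = j^*(C^\vee)$, so $(\m{IC}_S(L)_{|U})^\vee = (\m{IC}_S(L)^\vee)_{|U}$, reducing (iii) to the argument already used for (ii).

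By uniqueness of the intersection complex associated with the enriched variety $(S\cap U, L_{|S^o\cap U})$ on $U$, one concludes $\m{IC}_S(L)_{|U} \cong \m{IC}_{S\cap U}(L_{|S^o\cap U})$. The only non-formal ingredient is the commutation of Verdier duality with open restriction, which is the main step to flag; everything else is a direct verification that local conditions remain local after passing to an open set. Alternatively, one could run the same argument using the intermediate extension description $\m{IC}_S(L) = j_{!*}^{\mathrm{perv}}(L[\dim S])$ from the end of \S\ref{intco1}, since $Rj_*$, $Rj_!$, and the perverse truncations each commute with restriction to a further open set, and hence so does their combination $j_{!*}^{\mathrm{perv}}$.
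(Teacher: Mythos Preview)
Your proposal is correct and follows exactly the approach the paper sketches just after stating the two principles: the intersection complex is characterized by the strengthened conditions of (co)support together with its restriction to $L$ on $S^o$, and you verify that each of these is preserved under restriction to an open set, with the commutation of Verdier duality with $j^*$ (via $j^!=j^*$ for open immersions) being the one nontrivial input. The paper only offers this as a one-line justification; your write-up simply fleshes it out.
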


\begin{fact}\la{nocpri}{\rm 
(\textbf{$IC$ Normalization Principle})}  Let $\nu:\hat{Y} \to Y$ be the normalization of a variety.
Then $\nu_* \m{IC}_{\hat{Y}} (L) = \m{IC}_Y (L)$ (here,  $\nu$ is finite, so that $R\nu_*=\nu_*$, derived=underived).
\end{fact}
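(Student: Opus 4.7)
My plan is to verify the uniqueness characterization of $\m{IC}_Y(L)$ in terms of conditions of support and co-support (recalled at the start of \S\ref{intco1}) for the complex $\nu_* \m{IC}_{\hat Y}(L)$. Equivalently (after shifting by $\dim Y = \dim \hat Y$), I will show that $\nu_* IC_{\hat Y}(L)$ is a perverse sheaf on $Y$ whose restriction to a dense open is $L[\dim Y]$ and that it has no non-zero perverse sub- or quotient-object supported on the boundary, thus identifying it with $\m{IC}_Y(L)[\dim Y]$ by the intermediate extension characterization.

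First I would record the two basic facts about the normalization $\nu$ that make everything run: (i) $\nu$ is finite, in particular proper and quasi-finite, so $R\nu_* = \nu_* = \nu_!$ and, by Exercise~\ref{bt5}, $\nu_*$ is $t$-exact for the perverse $t$-structure; (ii) there is a dense open $Y^o\subseteq Y_{\mathrm{reg}}$ over which $\nu$ is an isomorphism, and we may assume $L$ is defined on $Y^o$ (identified with its preimage $\hat Y^o\subseteq \hat Y_{\mathrm{reg}}$ via $\nu$). Thus $\nu_* IC_{\hat Y}(L)$ is perverse on $Y$, and its restriction to $Y^o$ is $L[\dim Y]$.

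Next I would check the conditions of support for $\nu_* \m{IC}_{\hat Y}(L)$. Since $\nu$ is finite we have $\m{H}^i(\nu_*\m{IC}_{\hat Y}(L)) = \nu_*\m{H}^i(\m{IC}_{\hat Y}(L))$, and finite maps preserve dimensions of closed subvarieties, so
\[
\dim\mathrm{supp}\,\m{H}^i(\nu_* \m{IC}_{\hat Y}(L)) \;=\; \dim\mathrm{supp}\,\m{H}^i(\m{IC}_{\hat Y}(L)),
\]
and the required strict inequality $\le -i-1$ for $i\neq -\dim Y$ is inherited from $\m{IC}_{\hat Y}(L)$. For the co-support conditions, I would use the duality exchange for proper maps: because $\nu$ is finite, $\nu_*=\nu_!$, hence
\[
\bigl(\nu_*\m{IC}_{\hat Y}(L)\bigr)^{\vee} \;\cong\; \nu_*\bigl(\m{IC}_{\hat Y}(L)^{\vee}\bigr),
\]
and the conditions of support for the right-hand side follow from the conditions of support for $\m{IC}_{\hat Y}(L)^\vee$ by the same dimension-preservation argument.

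At this point uniqueness of the intersection complex (up to unique isomorphism) determined by its restriction to $Y^o$ together with the support and co-support conditions gives $\nu_*\m{IC}_{\hat Y}(L) = \m{IC}_Y(L)$. I expect the main subtlety to be bookkeeping around the open set on which $L$ lives: one must be a little careful that $\hat Y^o$ sits inside $\hat Y_{\mathrm{reg}}$ and maps isomorphically onto $Y^o\subseteq Y_{\mathrm{reg}}$, so that the ``seed'' $L[\dim Y]$ for the intermediate extension is genuinely the same on both sides; once this identification is in place, everything else is formal from $t$-exactness of finite direct image and the dimension-preservation property of $\nu$.
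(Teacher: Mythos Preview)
Your proposal is correct and follows essentially the same approach as the paper: the paper justifies this fact (together with the $IC$ Localization Principle) by noting that intersection complexes are characterized by the strengthened conditions of support and co-support together with the restriction to the given local system on a Zariski-dense open, and that these conditions are preserved under a finite birational map. Your argument spells out exactly these verifications (using exactness of $\nu_*$ for the support side and the duality exchange $\nu_* = \nu_!$ for the co-support side), so there is no substantive difference.
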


These first two principles hold because intersection complexes with coefficients are characterized by the strengthened conditions of support
and by restricting to the locally constant sheaf on some Zariski dense open subset of the regular part, and both conditions are preserved under restriction to any open set and under  a finite birational map.  See also Exercise~\ref{staic}.

\begin{fact}\la{locprin}
{\rm (\textbf{DT Localization Principle})} A summand $\m{IC}_S(L)$ appears in the decomposition theorem
on $Y$
 iff   there is an open $U \subseteq Y$ meeting $S$ such that
 the restriction $\m{IC}_S(L)_{|U}$  
 appears  in the DT on $U$. 
 \end{fact}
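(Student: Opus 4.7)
The two directions both rest on the IC Localization Principle (Fact~\ref{locpri}) together with the fact that the perverse $t$-structure is local and that $P(Y)$ is artinian, so multiplicities of simple perverse sheaves in semisimple perverse sheaves are well-defined (Jordan--H\"older, Exercise~\ref{artinian}). A preliminary reduction: by taking isotypical components of $L$ as in (\ref{isotyp}), we may assume the locally constant sheaf $L$ is simple, so that $IC_S(L)$ is a simple perverse sheaf on $Y$; the multiplicity of such a simple object as a summand of $\pc{q}{Rf_*\m{IC}_X(M)}$ is then an intrinsic invariant of that perverse cohomology sheaf, independent of any chosen splitting.

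\textbf{The ``only if'' direction.} Suppose $\m{IC}_S(L)[-q]$ is a summand of $Rf_*\m{IC}_X(M)$ on $Y$. Restrict to an open $U$ meeting $S$. Since restriction to an open subset commutes with $Rf_*$ (via proper base change for the open immersion $U\subseteq Y$; see Fact~\ref{moimp}), the restricted complex is canonically $Rf_{U,*}\m{IC}_{f^{-1}(U)}(M|_{f^{-1}(U)})$, which is the input of the DT on $U$. By the IC Localization Principle, $\m{IC}_S(L)|_U=\m{IC}_{S\cap U}(L|_{S^o\cap U})$, so the desired summand appears on $U$.

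\textbf{The ``if'' direction.} Apply the decomposition theorem globally to obtain
\[
Rf_*\m{IC}_X(M)\;\cong\;\bigoplus_{q'\geq 0}\;\bigoplus_{(S',L')\in\pev_{q'}}\m{IC}_{S'}(L')[-q'].
\]
Pass to $b$-th perverse cohomology sheaves and restrict to $U$. Because $\pcs^{b}$ commutes with the open restriction $(-)|_U$, and because each $\m{IC}_{S'}(L')|_U$ is either zero (if $S'\cap U=\emptyset$) or equals $\m{IC}_{S'\cap U}(L'|_{S'^o\cap U})$ (again by Fact~\ref{locpri}), we obtain a direct sum decomposition of $\pc{b}{Rf_{U,*}\m{IC}_{f^{-1}(U)}(M|)}$ into simple summands of the form $\m{IC}_{S'\cap U}(L'|)$ for those $(S',L')$ with $S'\cap U\ne\emptyset$. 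By hypothesis, the DT on $U$ for $q=b$ contains a summand $\m{IC}_{S\cap U}(L|_{S^o\cap U})$. By Jordan--H\"older in $P(U)$, the multiplicity of this simple object in $\pc{b}{Rf_{U,*}(\cdots)}$ is positive, so some global $(S',L')\in\pev_b$ must contribute it; in particular $S'\cap U$ and $S\cap U$ are equal as closed irreducible subvarieties of $U$, and $L'$ and $L$ restrict to isomorphic simple local systems on a common Zariski-dense open.

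\textbf{Matching $(S',L')$ with $(S,L)$.} Since $S$ is closed and irreducible in $Y$ and $S\cap U$ is a nonempty (hence dense) open in $S$, we have $S=\overline{S\cap U}$; similarly $S'=\overline{S'\cap U}$. Thus $S'\cap U=S\cap U$ forces $S'=S$. The local systems $L'$ and $L$ become isomorphic on a common dense open of $S_{\text{reg}}$, so as pairs $(S,L)\in\pev_b$ they are identified. This shows $\m{IC}_S(L)[-b]$ is a summand of $Rf_*\m{IC}_X(M)$ on $Y$, completing the proof.

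\textbf{Main obstacle.} The technical subtlety is that the DT decomposition is not canonical as a splitting, so one cannot simply ``track summands'' set-theoretically. The workaround is to pass to perverse cohomology sheaves (where the sum is canonical in each degree) and then to multiplicities of simple objects (which are canonical by the artinian property). Once one grants the locality of the perverse $t$-structure and of $Rf_*$ under open restriction, the rest is bookkeeping with the IC Localization Principle.
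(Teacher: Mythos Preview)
Your proof is correct and follows essentially the same route as the paper, which simply says the principle ``follows from the validity of the decomposition theorem on $Y$ and on every $U$ and from the fact that the summands of the decomposition theorem over $U$ are uniquely determined'' (deferring the latter to Exercise~\ref{uniqdet}). You have effectively written out that exercise: the passage to perverse cohomology sheaves, the Jordan--H\"older multiplicity argument in the artinian category $P(U)$, and the closure argument $S=\overline{S\cap U}$ are exactly the ingredients one uses to establish the uniqueness the paper invokes. One small remark: the commutation of $Rf_*$ with restriction to an open $U\subseteq Y$ is just open (or smooth) base change, not proper base change; your citation of Fact~\ref{moimp} is unnecessary here, though the conclusion is of course correct.
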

 
 This  last principle is very important. It fails, for example, for the map from the Hopf surface to $\pn{1}$
 in the following sense:
 there is no decomposition theorem over $\pn{1}$ (else we would have $E_2$-degeneration of the LSS),
 but the Hopf map is locally trivial over any  open proper subset $U \subseteq \pn{1}$, so that the 
 decomposition theorem holds there (K\"unneth).  It is important because when looking for summands in the decomposition theorem, it may be easier to detect them over some Zariski open subset. For example, if $X$ is nonsingular,
 given $f :X \to Y$, there is the open subset $Y_{reg(f)} \subseteq Y$ of regular values of
 $f$. Let $\sq$ be the locally constant sheaves given by the cohomology of the fibers of $f$ over $Y_{reg(f)}.$ Deligne's theorem applies to the map over $Y_{reg(f)}$. The reader can now observe that the  DT localization principle
 allows us to deduce that  all the   $\m{IC}_Y(R^q)[-q]$ are direct summands of $Rf_* \rat_X.$
 
 The principle follows from the validity of the decomposition theorem  on $Y$ and on every $U$ and from the fact that
 the summands of the decomposition theorem  over $U$ are uniquely determined (this is left as Exercise~\ref{uniqdet}).

 In the proof of Theorem~\ref{icdr}  we shall also make use of a simple fact concerning topological coverings
 that we leave as Exercise~\ref{trcov}

\begin{proof}
\n
\textbf{\em (of  Theorem~\ref{icdr})}

\bit
\item
WLOG, we may assume that $Y':= f(X)=Y,$  i.e. that $f$ is surjective.

\item
We have $f_* \m{IC}_X
\cong \oplus_{q\geq 0} \oplus_{(S,L) \in \m{EV}_q}  \m{IC}_S (L)[-q].$

\item
By the two localization principles above, we can replace $Y$ with any  of its Zariski-dense open subsets. 

\item
We may thus assume that there are no  enriched proper subvarieties  in the~DT:
\[f_* \m{IC}_X 
\cong \oplus_{q\geq 0} \m{IC}_Y (L_q)[-q].\]

\item
By constructibility,  and by further shrinking if necessary, may also assume  that $\m{IC}_Y (L_q)= L_q$ is locally constant and we get
\[Rf_* \m{IC}_X \cong  \bigoplus_{q\geq 0} R^qf_* \m{IC}^q [-q] =
 \bigoplus_{q\geq 0} L_q [-q].\]

\item
We may also assume that $\m{IC}_Y = \rat_Y.$

\item WLOG, we may assume that $X$ is normal. In fact, take the normalization $\nu: X' \to X$;
by the $IC$ Normalization Principle, we have $R\nu_* \m{IC}_{X'} = \m{IC}_X;$ on the other hand, we have $R(f\circ \nu)_*=
Rf_* \circ R\nu_*$  ($\nu$ is finite, so $R\nu_* =R^0\nu_*$; but we do not need this here).

\item
FACT: regardless of normality, there is always  a natural map
$\rat_X \to \m{IC}_X$ in place.  Since $X$ is normal, this map   induces
$\rat_X \cong \m{H}^0 (\m{IC}_X),$ i.e. we have a distinguished triangle
$\rat_X \to \m{IC}_X \to \tu{1} \m{IC}_X \stackrel{[1]}\to .$ We push it forward and deduce
\[
R^0f_* \rat_X = R^0f_* \m{IC}_X.
\]

\item
We are thus reduced to showing that, up to further-shrinkage,
$\rat_Y$ is a direct summand of $L_0=R^0f_* \m{IC}_X=R^0f_* \rat_X.$

\item
Stein factorize $f:=h_{finite} \circ g_{connected \, fibers}= : X \stackrel{g \,conn.fib.}\lorw Y' \stackrel{h\, fin.}\lorw Y.$

\item
Because of connected fibers, we have $R^0g_* \rat_X= \rat_Y.$
We are thus reduced to showing that, after shrinking,   $\rat_Y$ is a direct summand
of  $R^0h_* \rat_{Y'}.$

\item
Since $h$ is finite,  by shrinking the target if necessary, $h$ becomes a covering map.
\item
The claim now follows by  a standard trace argument (Exercise~\ref{trcov}).\qedhere
\eit
\end{proof}

\subsection{Pure Hodge structure in intersection cohomology}\la{pfgt}

The theory of mixed Hodge modules of M. Saito \ci{samhm} endows the 
intersection cohomology groups of complex varieties  with a mixed Hodge structure. 

Let us use the intersection complex as a direct summand Theorem~\ref{icdr} to endow
the intersection cohomology of a complete surface with a pure Hodge structure.
This is merely to illustrate the method, which works for any algebraic variety \cites{htadt,decII,split}. This special case is 
simple because the resolution is semismall, yet illuminating because its simplified set-up allows to focus on the main 
ideas  without  distractions.

Let $Y$ be a complete surface.  We are interested in $IH^*(Y,\rat)$, so that we may assume
that $Y$ is normal, for the intersection cohomology groups
do not change under normalization. In particular, $Y$ has isolated singularities. Let $S$ be the finite set of singular points. Pick a resolution of the singularities $f:X \to Y$
that leave $Y_{reg}$ untouched. The decomposition theorem has the form
$Rf_* \rat_X [2] = IC_Y \oplus \oplus_{b} V^b_S[-b]$, where $V^b_S$ is a skyscraper sheaf at $S$.
Since all fibers have dimension $\leq 1,$ we have $R^{q \geq 3}f_* \rat_X =0$.
It follows that $V^{b\geq 1}_S =0$. By the symmetries of Verdier duality, we have that
$V^{b\leq -1}_S=0$ and we have $Rf_*\rat_X[2]  \cong IC_Y  \oplus
V^0_S$. This is perverse and self-dual. The associated pairing yields the  usual intersection pairing
on $H^*(X,\rat)= IH^*(Y,\rat) \oplus V_S$. The two summands are orthogonal for this pairing
(there are no maps $IC_Y \to V^0_S$!). The l.h.s. is a pure Hodge structure. The pairing is a map of pure Hodge structures.
In order to conclude that $IH^*(Y,\rat)$ is a pure Hodge substructure (our goal!), it is enough to show that
$V^0_S \subseteq H^2(X,\rat)$ is a pure Hodge substructure. By the support conditions for $IC$, ${\m{H}^0(IC_Y)=0}$. It follows that $V^0_S=R^2f_* \rat_X= H^2(f^{-1}(S),\rat)$, which is generated 
by the fundamental classes of the curve fibers, which are of $(p,q)$-type $(1,1)$, i.e. they form a pure Hodge structure
of weight two.

\subsection{Exercises for Lecture 4}\la{Exelz3}

\begin{exe}\la{de3bg}{\rm (\textbf{Failure of local Poincar\'e duality;  $\rat_Y$ not a direct summand})
 Let $Y$ be the affine cone over a nonsingular embedded projective curve
of genus $g \geq 1.$  Use the defining property $\omega_Y$ to show that
$\omega_Y \neq \rat_Y [4]$ so that the usual local Poincar\'e duality fails. 
Take the usual resolutions $f: X \to Y$. Use the fundamental relation $Rf_* (C^\vee) =
(Rf_*C)^\vee$ and deduce, by using the failure of  Poincar\'e
duality in neighborhoods of the vertex, that $\rat_Y$ is not a direct summand of $Rf_* \rat_X.$}
\end{exe}

\begin{exe}\la{5tgb}
{\rm (\textbf{Goresky-MacPherson's estimate})
 Let $f: X\to Y$ be proper with $X$ nonsingular (or at least with $\m{IC}_X=\rat_X$). Assume that $S \subseteq Y$
 appears in the decomposition theorem for $Rf_*\rat_X[\dim{X}].$ Show that \[\dim{X} - \dim{S} \leq 2 \dim f^{-1}(s), \quad \forall s \in S.\]
Observe that equality implies that  $S$ appears only in perversity zero (i.e. with shift $b=0$ only).
Deduce from this that, for example, in the decomposition theorem for the mall resolution of the affine cone over $\pn{1}\times
\pn{1} \subseteq \pn{3}$, the vertex does not contribute direct summands. What happens if the map $f$ is of pure
relative dimension $1$? (Relate the answer to the number of irreducible components in the fibers and consider
what kind of very special property the intersection complexes appearing in the decomposition theorem 
should enjoy). What happens if, in addition to 
having $f$ of relative dimension one, $Y$ is also nonsingular, or at least has $\m{IC}_Y=\rat_Y$?}
\end{exe}

\begin{exe}\la{pld}{\rm(\textbf{Primitive Lefschetz decomposition}) Familiarize yourself with the primitive Lefschetz
decomposition (PLD) associated with the hard Lefschetz theorem for compact K\"ahler manifolds.
Observe that the same proof  of the  PLD holds if you start with a bounded  graded object $H^*$
in an abelian category with $H^b=0$ for $|b|\gg 0$ and  endowed with a degree two operator $\eta$ satisfying
$\eta^{b}: H^{-b} \cong H^b$ for every $b \geq 0.$ Deduce the appropriate PLD for the graded object $\pcs^{*}$
arising from the relative hard Lefschetz theorem~\ref{rhl}. State an appropriate version of the 
unimodality  of the Betti numbers of  K\"ahler manifolds in the context of the graded object $\pcs^* \in P(Y).$}
\end{exe}

\begin{exe}\la{hl} {\rm (\textbf{Proofs of hard  (vache (!), in French) Lefschetz })
Let $i: Y \to X$ be a nonsingular hyperplane section of a nonsingular projective manifold. Use Poincar\'e duality and the slogan 
``cup product in cohomology = transverse intersection in homology, to show that we have commutative diagrams
for every $r\geq 1$ (for $r=1$ we get a triangle!)
\beq\la{rtm}
\xymatrix{
H^{\dim{X} -r}(X,\rat) \ar[d]^{i^*}_{\mbox{restriction}} \ar[r]^{\eta^r} & H^{\dim{X} +r}(X,\rat)  \\
H^{\dim{Y} -(r -1)}(Y,\rat)  \ar[r]^{\eta_{|Y}^{r-1}} & H^{\dim{Y} +(r-1)}(X,\rat)  \ar[u]^{i_!}_{\mbox{Gysin}}
}
\eeq
Assume the Hard Lefschetz for $Y$ (induction). Use the Lefschetz hyperplane theorem and deduce  the hard Lefschetz  for $X$, but only for $r\geq 2$ (for $r=0$ it is trivial). For $r=1$, we have the commutative triangle with $i^*$ injective and $i_!$ surjective
\beq\la{rtmb}
\xymatrix{
H^{\dim{X} -1}(X,\rat) \ar[dr]^{i^*}_{\mbox{restriction}} \ar[rr]^{\eta} && H^{\dim{X} +1}(X,\rat)  \\
& H^{\dim{Y} }(Y,\rat)   \ar[ur]^{i_!}_{\mbox{Gysin}}.&
}
\eeq
Hard Lefschetz boils down to the statement that $\im\,{i^*} \cap \ke
\, i_! =\{0\}.$ 
Show that hard Lefschetz is equivalent to the statement: $(*)$  the non-degenerate intersection form
on $H^{\dim{Y}}(Y,\rat)$ stays non-degenerate when restricted to $i^*H^{\dim{X}-1}(X,\rat)$.
At this point, we have two options. Option 1:  use the Hodge-Riemann bilinear relations for $Y$:
a class $i^*a \neq 0$ in the intersection would be primitive and the same would be true for its  $(p,q)$-components;
argue that we may assume $a$ to be of type $(p,q)$;
the Hodge-Riemann relations would then imply $0 \neq  \int_Y (i^* a)^2= \int_X \eta \wedge a^2=0,$
a contradiction. 
 Option 2: put $Y$ in a pencil  $\w{X} \to \pn{1}$ with smooth total space (blow up $X$); let $\Sigma$ be the set of critical values of $f$; use the global invariant cycle
theorem  and the Deligne semisimplicity to show that the irreducible $\pi_1 (\pn{1} \setminus \Sigma)$-module $H^{\dim{Y}}(Y,\rat)$
has $i^* H^{\dim Y}(X,\rat)$ as its module of invariants. Conclude by first proving and then by   using the following lemma (\ci{weil2}, p.218):
let $V$ a completely reducible  linear representation of a group $\pi$, endowed with a $\pi$-invariant and  non-degenerate bilinear form $\Phi$; then the restriction of $\Phi$ to the invariants $V^\pi$ is non-degenerate.
}
\end{exe}

\begin{exe}\la{moresymme}
{\rm
(\textbf{Four symmetries})
Let $f:X\to Y$ be a projective map of projective varieties  with $X$  nonsingular and  $Y$ of dimension $4$.
Assume that the direct image perverse cohomology sheaves $\pcs^b$ of $Rf_*\rat_X[\dim{X}]$ live in the interval $[-2,2]$
and that  the  enriched varieties appearing in the decomposition theorem  are supported at  a point  $S^0$, 
at curve $S^1$ and at a  surfaces $S^2$
on $Y$. Denote the $b$-th direct summands of  $H^*(X,\rat)$ stemming from (\ref{mki9})  by $H^*_b(X,\rat)$.
Each of these groups further decomposes into direct summands labelled by the ${S_k: H^*_b(X,\rat)=\oplus_{k} H^*_{b,S^k}(X,\rat)}$.  List  the four kind of symmetries
among the various $H^*_{b,S^k}(X,\rat)$: Verdier Duality  and Relative Hard Lefschetz
for $b$ and $-b$; Verdier duality and Hard Lefschetz in intersection cohomology for the  for the same $b$.
(See \ci{decmightam}, \S2.4.)
}
\end{exe}

\begin{exe}\la{okn}
{\rm
(\textbf{Perverse cohomology sheaves and the decomposition theorem})
Show that if $P\in P(Y)$, then $P=\pc{0}{P}.$ Show that if $C= \bigoplus P_b[-b]$ with ${P_b\in P(Y)}$,
then $\pc{b}{C} = P_b.$
Let $j: \comp^n\setminus \{0\} \to \comp^n$ be the natural open embedding.
Compute $\pc{i}{Rj_! \rat}$ and $\pc{i}{Rj_* \rat}$. Let $j:U\to X$ be an open embedding and let $P \in P(U)$.
Deduce that in the decomposition theorem for $C:= Rf_* IC_{S}(L)$, we have
that $\pc{b}{C}=\oplus_{EV_b} IC_S(L)$, whereas in the one for  $K:= Rf_* \m{IC}_{S}(L)$ we have a less simple expression
(involving the dimensions of the varieties $S$).
}
\end{exe}

\begin{exe}\la{uniqdet} {\rm (\textbf{The summands in the DT are uniquely determined})}
{\rm Prove that the direct summands in the DT are uniquely determined. How non-unique is the isomorphism in the statement
of the decomposition theorem?}
\end{exe}

\begin{exe}\la{noesupp} {\rm (\textbf{No extra supports})}
{\rm Let $f: X \to Y$ be a proper map with $X$ a nonsingular surface and $Y$ a curve. Use the proof of Theorem~\ref{gt6} to show that
if all fibers are irreducible, then the enriched varieties appearing in the decomposition theorem for $Rf_*\rat_X$
are supported on $Y.$ }
\end{exe}

\begin{exe}\la{5678}{\rm (\textbf{$\m{IC}=\rat$ for $\comp^n/G$})
Let  $G$ be a finite group acting on a complex vector space $V$ and let $f: X:=V\to Y:=V/G$ be the resulting
finite quotient map. Show that the natural map $\rat_Y \to R^0f_* \rat_X$ splits. Deduce that
$\rat_Y[\dim{Y}]$ is Verdier self-dual. Deduce that $\rat_Y[\dim{Y}]$ is perverse.
Show that moreover, it satisfies the conditions of support that characterize the intersection complex~${IC}_Y.$
}
\end{exe}

\begin{exe}\la{staic}{\rm 
(\textbf{$IC$  and finite maps})
Prove that the direct image of an intersection complex under a finite map
is an intersection complex.}
\end{exe}

\begin{exe}\la{trcov} {\rm  (\textbf{Coverings traces})} {\rm Let $f: X \to Y$ be a proper submersion of fiber dimension zero (finite topological covering). Prove (without using the  fancy semisimplicity results seen above) that $L:=f_*\rat_X$ is a semisimple locally constant sheaf admitting $\rat_Y$ as a direct summand (trace map).
If the covering is normal (a.k.a. Galois), use the language of representations of the fundamental group to reach the same conclusion.}
\end{exe}

\section{Lecture 5: the perverse filtration}\la{tpfg}

\textbf{Summary of Lecture 5.}  {\em The classical ``topologists'' Leray spectral sequence for fiber bundles. Grothendieck spectral sequence, Leray as a special case. Verdier's spectral objects. Geometric description of the
perverse Leray filtration. Relation  to the topologists' point of view. Hodge-theoretic applications.
The P=W  theorem and conjecture in non abelian Hodge theory. A sample perversity calculation. A motivic question on the projectors that can be associated with the decomposition theorem.}

\subsection{The perverse spectral sequence and the perverse filtration}\la{rmtyo}

It is important to keep in mind that given $f:X\to Y$,  the Leray filtration $L$  on $H^*(X,\rat)$ is defined  a priori, independently
of the Leray spectral sequence. The latter is  machinery that tells you something about the
graded pieces $Gr^L_i H^d(X,\rat)$ of the Leray filtration, i.e. $Gr^L_i H^d(X,\rat)  =E_\infty^{d-i,i}.$

\textbf{The Leray spectral sequence for a fiber bundle.} You can consult Spanier's or Hatcher's algebraic topology textbooks.
Let $p: E \to B$ be a topological fiber bundle with fiber $F$.
Assume you are given a cell complex structure $B_\bullet$ on $B$: in short, we have
the $p$-th skeleta $B_p$, $B_p\setminus B_{p-1}$ is a disjoint union of $p$-cells, $H^r(B_p,B_{p-1})
=\w{H}^r(\mbox{Bouquet of $p$ spheres})\cong \delta_{rp}\rat^{\#}$ (we call this the cellularity condition), etc.
The cohomology of the complex $H^p(B_p, B_{p-1})$ with differentials given
by consideration of the coboundary operators in the long exact sequence of the triples
$(B_p, B_{p-1}, B_{p-2})$, computes $H^*(B, \rat)$. In fact, there is a spectral sequence
$E_1^{p,q} = H^{p+q}(B_s,B_{s-1})$, but the cellularity condition reduces the spectral sequence to a complex.
The same kind of spectral sequence for the pre-images $E_\bullet$ of $B_\bullet$ reads
$E_1^{p,q} = H^{p+q}(E_p, E_{p-1})$ and it does not reduce to a complex. The  bundle structure and the cellularity condition
tell us that $E_1^{p,q} = H^p(B_p,B_{p-1})\otimes H^q(F).$ One then argues that $E_2^{pq}=H^p(B,\sq)$.
This is probably close in spirit to the original way of viewing the Leray spectral sequence for a fiber bundle.
The increasing Leray filtration on the cohomology of the total space  is given by the kernel of the restriction maps
to pre images of the skeleta $\ke H^?(E,\rat) \to  H^?(E_{??},\rat)$. Let us not worry about indexing schemes.

\textbf{Grothendieck's Leray spectral sequence.} You can consult Grothendieck ``Tohoku'' paper.
Grothendieck gave a sheaf-theoretic approach to this picture: start with a complex of sheaves $C$ on $Y$; take a Cartan-Eilenberg resolution
for $C$, i.e. an injective resolution  $C \to I$ that ``is'' also an injective resolution for the truncated complexes $\td{i} C$ and for the cohomology sheaves $\m{H}^i(C);$ the complex of global sections $\Gamma (Y,I)$   is filtered; the Grothendieck
spectral sequence is the spectral sequence for this filtered complex, and it abuts to the  standard (Grothendieck) filtration
given by $\im\,H^*(Y, \td{i}C) \subseteq H^*(Y,C).$ Given $f: X \to Y$ and $C \in D(X)$, the (Grothendieck-)Leray
spectral sequence is the Grothendieck spectral sequence for $Rf_* C$, and the (Grothendieck)-Leray filtration 
is the standard filtration for~$Rf_*C$. 

\textbf{Perverse and perverse Leray spectral sequences and filtrations.}
The main input leading to the machinery above, is the use of injective resolutions
together with the system of standard truncation maps.
If we replace the standard truncation with the perverse truncation maps, we obtain the perverse and perverse Leray spectral sequences
and the perverse and perverse Leray filtrations. At the end, given $C \in D(Y)$, the perverse filtration
is given by setting $\ms{P}_b H^*(Y,C) := \im \,H^*(Y, \ptd{b} C) \subseteq H^*(Y, C)$. Similarly for the perverse Leray filtration
\[\ms{P}_b H^*(X,Rf_* K) := \im \,H^*(Y, \ptd{b} Rf_*K) \subseteq H^*(Y, Rf_*K)= H^*(X,K).\]
In fact, every t-structure on $D(Y)$, and there are many different ones!, gives rise to the same kind of picture
outlined above. The Grothendieck and Leray filtrations correspond to the standard t-structure
on~$D(Y)$.

\textbf{Verdier's spectral objects.}
There is at least another convenient way to view these mechanisms, one that, once you are given a cohomological functor,
 avoids injective resolutions, namely Verdier's spectral objects \ci{shockwave}: 
the input is a cohomological functor on a t-category; the output is a spectral sequence abutting to the filtration
defined in cohomology by the t-truncations. Even if not logically necessary in our context, this is  a useful tool when there are not
enough  injectives (e.g. the category of perverse sheaves
does not have enough injectives), and it is a, yet another!,  cool way to look at spectral sequences.

\textbf{Why bother with the perverse Leray filtration?} Because in the context of the decomposition 
theorem \[C:= Rf_* IC_X(M) \cong \oplus_b \oplus_{EV_b} IC_S(L) [-b]= 
\oplus_b \pc{b}{C} [-b],\] the perverse Leray spectral sequence is $E_2$-degenerate,  
the graded pieces of the perverse Leray filtration 
are the cohomology groups $H^{*}(Y, \pc{b}{C})$ and  the cohomological decomposition theorem
gives splittings of the perverse Leray filtration. In particular, every cohomology class
in $IH^*(X,M)$  has $b$-components, which split further into $EV_b$-components.
In particular, if you know that the perverse filtration has some property,
e.g. it carries a Hodge structure, then the graded $b$-pieces inherit  such a structure as well, and so will
the individual $EV_b$-pieces. See Corollary~\ref{bnmy} and Remark~\ref{stream}

\textbf{Skleta in algebraic geometry?}
Let us go back to Leray for fiber bundles. In that topological context, it is natural to work with a cell complex structure.
We can do that with smooth projective maps $f: X\to Y$ in complex algebraic geometry (varieties can be triangulated), but the skeleta 
will not be algebraic subvarieties. It is hard to predict the properties of the Leray filtration if it is described 
as a kernel of a restriction map to some closed subspace that is not a subvariety. We may say that in the context of topological fiber bundles,
the Leray filtration is described geometrically (using the geometry at hand) via the kernels of the  pull-back maps
to pre-images of skeleta.
In the context of maps of complex algebraic varieties, we are interested in a geometric description of the perverse and perverse Leray filtrations, but the skeleta of a cell-decomposition do not seem to be immediately helpful.

\subsection{Geometric description of the perverse filtration}\la{rmtyop}
\begin{??}\la{qonpf} 
Can we describe the perverse filtration $\ms{P}_b H^*(Y,C)$ geometrically?
\end{??}

Let us approach this problem in the special case that is reminiscent to  the decomposition theorem, i.e. let us assume
that $C=\oplus_b P_b [-b] \in D(Y)$ with $P_b \in P(Y)$. In this case, we have that
the perverse filtration comes to us already canonically  split: 
\beq\la{pspl}
\ms{P}_b H^*(Y,C) = \im \left( \oplus_{b'\leq b} H^{*-b}(Y, P_b)\right) \subseteq H^*(Y,C).
\eeq
It is a good place to remark that the decomposition theorem asserts the existence
of a direct sum decomposition, not that one can find a natural one. In general, there is no 
such thing. A relatively ample line bundle provides you with the possibility of choosing
some distinguished splittings; see \ci{shockwave} and \ci{split}.

Exercise~\ref{ght} proves half of the following fact: let $P$ be a perverse sheaf on a quasi projective variety
$Y$ and let $Y_\bullet$ be a  general flag of  linear sections  of $Y$ for some embedding in projective space; 
here $Y_k \subseteq  Y$ has codimension $k$ in $Y;$ then $P_{|Y_k}[-k]  \in P(Y_k).$

Exercise~\ref{ght2}  first asks you to apply repeatedly  the Lefschetz hyperplane theorem for perverse sheaves 
to the elements of the flag $Y_\bullet$ to show that the restriction maps
$H^*(Y,P) \to H^*(Y_k, P_{|Y_k})$ are  injective
for every $* \leq -k.$ Next, it asks 
 you to specialize the situation to the case when $Y$ is affine, to use Artin vanishing theorem and deduce that, for $Y$ affine, the
 restriction maps
$H^*(Y,P) \to H^*(Y_k, P_{|Y_k})$   are zero for $* >-k.$

We   conclude that, when $C= \oplus_b P_b [-b] \in D(Y)$, with $P_b \in P(Y)$ and $Y$ is affine, we have 
a geometric description of the perverse filtration
\beq\la{sde}
\ke{(H^*(Y,C) \to H^*(Y_k, C_{|Y_k}))}=   \ms{P}_{*+k-1} H^*(Y,C).
\eeq
In other words, for cohomological purposes, we may consider the $Y_\bullet$ as the skeleta
of a ``cell'' decomposition; the term cells now refer to the fact that the relative cohomology groups
$H^*(Y_{k},Y_{k+1},P)$ are non zero in at most one cohomological degree, which is reminiscent of the analogous fact
for cell complexes (vanishing for bouquet of spheres).

By renumbering ~\ref{sde}, we get
\beq\la{gdpvb}
\ms{P}_bH^*(Y,C)= \ke{(H^*(Y,C) \to H^*(Y_{b-*+1}, C_{|Y_{b-*+1}}))}.
\eeq

What if $C\in D(Y)$ is not split and $Y$ is not affine? 

If $Y$ is affine, then exact same description, but with a different proof, remains valid  for every $C \in D(Y)$.

If $Y$ is quasi projective, then we can use the  Jouanolou  trick  (Exercise~\ref{joau}), to reduce to the case to the
affine situation; see  \ci{decII}. The use of this trick is not necessary and one can work directly on $Y$, 
but has to use a general pair of flags coming from a suitable embedding in projective space.

Let us state the end result for the  perverse Leray filtration  in the special, but key case of a map to an affine
variety. Note that the map needs not to be proper and that it applies to every complex, not just one whose direct image splits
as above. For a discussion of the case of the standard Leray filtration, see
\ci{deabday}.

\begin{tm}\la{tmgdpf}{\rm (\textbf{Geometric description of the perverse Leray filtration} \cites{decmigso3,decII})}
Let $f:X \to Y$ be a map of varieties with $Y$ affine and let $K \in D(X).$ Then there is a flag
$Y_\bullet \subseteq Y$, with pre-image flag $X_\bullet \subseteq X$, such that
\[
\ms{P}_b H^*(X, K) =  \ke\left\{(H^*(X,K) \to H^*(X_{b-*+1}, K_{|X_{b-*+1}})\right\}.
\]
\end{tm}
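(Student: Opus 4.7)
The plan is to reduce the statement to an intrinsic description of the perverse filtration on $Y$, and then to implement that description by iterating the perverse Lefschetz hyperplane Theorem~\ref{lht} and Artin vanishing Theorem~\ref{artinio} along a generic flag of linear sections, mimicking and extending the split affine case described in~(\ref{sde})--(\ref{gdpvb}).

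Set $C := Rf_*K \in D(Y)$, so that by definition $\ms{P}_b H^*(X,K) = \ms{P}_b H^*(Y, C)$. The restriction $H^*(X,K)\to H^*(X_k,K|_{X_k})$ factors through $H^*(Y_k, C|_{Y_k})$ via the base change morphism $C|_{Y_k} \to R(f_k)_*(K|_{X_k})$, where $f_k \colon X_k \to Y_k$ is the restriction of $f$. Choosing $Y_\bullet$ to be a general linear flag (for some projective embedding of a compactification of $Y$) that is transverse to a Whitney stratification adapted to $f$, generic base change ensures that this morphism is an isomorphism in $D(Y_k)$, so one is reduced to proving
\[
\ms{P}_b H^*(Y,C) \;=\; \ke\!\left(H^*(Y,C) \to H^*(Y_{b-*+1}, C|_{Y_{b-*+1}})\right).
\]

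For the ``atomic'' case $C = P[-b_0]$ with $P \in P(Y)$ perverse, iterating Theorem~\ref{lht} along the flag (applied successively to the perverse sheaves $P|_{Y_j}[-j]$ on the affine slice $Y_j$; see Exercise~\ref{ght}) shows that $H^d(Y,P)\to H^d(Y_k, P|_{Y_k})$ is an isomorphism for $d\leq -k-1$ and injective for $d=-k$. On the other hand, Artin vanishing applied to the perverse sheaf $P|_{Y_k}[-k]$ on the affine $Y_k$ gives $H^d(Y_k, P|_{Y_k})=0$ for $d>-k$, so the restriction trivially vanishes there. Setting $d = *-b_0$ and $k = b-*+1$, the kernel of the restriction equals $H^*(Y,C)$ when $b\geq b_0$ and is zero when $b<b_0$, which matches $\ms{P}_b H^*(Y,C)$, since $\ptd{b}(P[-b_0])$ is $0$ for $b < b_0$ and all of $C$ for $b \geq b_0$.

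To promote the atomic statement to arbitrary $C \in D(Y)$, filter $C$ by its perverse truncations via the distinguished triangles
\[
\ptd{b-1}C \to \ptd{b}C \to \pc{b}{C}[-b] \to,
\]
and run an induction on the finite range of non-zero perverse cohomology sheaves of $C$, applying the atomic case to each $\pc{b}{C}[-b]$ and chasing through the long exact sequences obtained by pairing with the restriction-to-$Y_k$ functor. The main technical obstacle will be the first step, namely the base-change identification that passes the problem from $X$ to $Y$: it requires showing that the flag $Y_\bullet$ can be chosen sufficiently generic with respect to $f$ so that the kernel on the $X$-side coincides with the kernel on the $Y$-side. This is the most delicate point of the argument, worked out in detail in~\cites{decmigso3,decII}.
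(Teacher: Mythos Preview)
The paper does not prove this theorem in the text; it only sketches the split case $C=\oplus_b P_b[-b]$ (your ``atomic'' case, summed up) via Exercises~\ref{ght} and~\ref{ght2}, then explicitly says that for general $C\in D(Y)$ ``the exact same description, but with a different proof, remains valid'' and defers to \cites{decmigso3,decII}. So your atomic case is exactly what the paper does, and there is nothing further in the paper to compare your Step~3 against.

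Two remarks on where your outline diverges from what is actually in those references. First, you single out the base change $C|_{Y_k}\to R(f_k)_*(K|_{X_k})$ as ``the main technical obstacle''. It is a genuine point (since $f$ is not assumed proper), but it is dispatched by choosing $Y_\bullet$ generic for a stratification adapted to $Rf_*K$ so that generic base change applies; this is standard. Second, and more substantively, your Step~3 proposes to pass from the atomic case to arbitrary $C$ by dévissage along the perverse truncation triangles and a diagram chase. This can be made to work, but it is not a routine five-lemma: to control the kernel for $C$ from the kernels for $\ptd{b-1}C$ and $\pc{b}{C}[-b]$, injectivity alone (Exercise~\ref{ght2}) is not enough; you need the sharper range information (isomorphism for $*\le -k-1$, injective for $*=-k$, target zero for $*>-k$) and you need the flag to be simultaneously generic for \emph{all} the perverse truncations of $C$, which is arranged via the $t$-exactness of $i_k^*[-k]$ from Exercise~\ref{ght}. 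In \cites{decmigso3,decII} the argument is organized differently: one shows that the flag filtration on $R\Gamma(Y,C)$ has the ``cellularity'' property $H^*(Y_k,Y_{k+1},C)$ concentrated in a single degree (the analogue of the CW picture recalled in \S\ref{rmtyo}), and then identifies the resulting spectral sequence with the perverse one. Your induction is a legitimate alternative route to the same identity on $Y$; just be aware that the paper's ``different proof'' refers to this spectral-sequence/cellularity packaging, not to the base-change step.
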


\subsection{Hodge-theoretic consequences}\la{htco}
Here is a corollary that exemplifies the utility of having a geometric description
of the perverse filtration as the  kernels of restriction maps.

\begin{cor}\la{bnmy} {\rm (\textbf{Perverse Leray and MHS})}
 Let $f: X \to Y$ be a map of varieties. Then
the subspaces  $\ms{P}_b \subseteq H^*(X,\rat)$ of  the perverse Leray filtration  on the cohomology
of the domain are mixed Hodge substructures.  In particular, the graded pieces
carry a natural mixed Hodge structure.
\end{cor}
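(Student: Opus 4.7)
The plan is to combine the geometric description of the perverse Leray filtration from Theorem~\ref{tmgdpf} with Deligne's strictness of morphisms of mixed Hodge structures, after first reducing to the case of an affine target via Jouanolou's trick.

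First, I would reduce to the case where $Y$ is affine. By Jouanolou's trick (Exercise~\ref{joau}), there is an affine bundle $\pi: \w{Y}\to Y$ with $\w{Y}$ affine; pulling back along $\pi$ produces a Cartesian square with a new map $\w{f}:\w{X}\to \w{Y}$, where $\w{X}\to X$ is again an affine bundle of the same relative dimension $d$. The map $\w{X}\to X$ induces an isomorphism $H^*(X,\rat)\cong H^*(\w{X},\rat)$ of mixed Hodge structures (affine bundles are Zariski locally trivial with contractible fibers, so they are cohomological equivalences; the induced map respects Deligne's MHS by functoriality). Moreover, smooth base change identifies $\pi^*Rf_*K$ with $R\w{f}_*\w{\pi}^*K$ up to shift, and $\pi^*[d]$ is t-exact for the perverse t-structure; consequently this cohomological isomorphism matches the perverse Leray filtration on the two sides. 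So it suffices to prove the statement for the map $\w{f}$, with $\w{Y}$ affine.

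Second, with $Y$ affine, apply Theorem~\ref{tmgdpf} to $K=\rat_X$ (or more generally to any complex of geometric origin on $X$): there is a flag $Y_\bullet\subseteq Y$ with preimage flag $X_\bullet\subseteq X$ such that
\[
\ms{P}_bH^*(X,\rat) = \ke\bigl\{ H^*(X,\rat)\lorw H^*(X_{b-*+1},\rat)\bigr\}.
\]
Now the restriction map $H^*(X,\rat)\to H^*(X_{b-*+1},\rat)$ is a morphism of mixed Hodge structures by the functoriality of Deligne's MHS on the singular cohomology of complex algebraic varieties. Since morphisms of MHS are strict, their kernels are automatically mixed Hodge substructures. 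Hence $\ms{P}_bH^*(X,\rat)$ is a MHS. The graded pieces $Gr_b^{\ms{P}}H^*(X,\rat)$ then inherit a canonical MHS as quotients of MHS by sub-MHS.

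The main obstacle is the bookkeeping around the Jouanolou reduction, namely checking that the affine bundle pullback really does identify the perverse Leray filtrations on the two sides and that the resulting cohomology isomorphism is one of MHS. Once this is in hand, everything else reduces to the twin inputs that (i) the perverse filtration has a purely geometric incarnation as kernels of restriction maps to subvarieties, and (ii) such restriction maps are morphisms of MHS. It is precisely this juxtaposition--- a t-structure-theoretic object on the one hand, a set-theoretic kernel on the other--- that makes the geometric description of Theorem~\ref{tmgdpf} so useful here.
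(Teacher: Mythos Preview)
Your proposal is correct and follows exactly the approach the paper has in mind: the corollary is stated immediately after Theorem~\ref{tmgdpf} precisely as an illustration that, once the perverse Leray filtration is exhibited as kernels of restriction maps to algebraic subvarieties, the functoriality of Deligne's mixed Hodge structures does the rest. The paper does not spell out the Jouanolou reduction in the proof of the corollary itself, but it flags it in the discussion preceding Theorem~\ref{tmgdpf} (and in Exercise~\ref{joau}) as the device by which one passes from quasi-projective to affine targets, so your inclusion of it is entirely in keeping with the paper's intended argument.
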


\begin{rmk}\la{stream}{\rm (\textbf{Streamlined Hodge-theoretic proof of the decomposition theorem})
Corollary~\ref{bnmy} can be used as the basis for 
a streamlined  Hodge-theoretic proof of
the decomposition  and allied results for the push-forward of the intersection complex of a variety. This would shorten considerably the proofs in \ci{decmightam}.}
\end{rmk}

\subsection{Character variety and Higgs moduli: \texorpdfstring{$P=W$}{P=W}}\la{p=w}

There is a version of the story outlined in what follows  for any complex reductive group; one can even consider higher dimensional 
projective manifolds instead of just curves.  However,  we stick with  $GL(2,\comp)$ and curves.

Fix a smooth projective curve $X$ of genus $g\geq 2$ a point $x\in X$.

\textbf{Character variety $M_B$.} 
Let $M_B$ be the moduli space of irreducible representations of $\pi_1(X\setminus x) \to GL(2,\comp)$ subject to the condition
that a small loop circuiting $x$ maps to $-Id.$ This is a nonsingular affine variety of dimension
$2a:=8g-6.$ 

\textbf{Higgs moduli space and Hitchin map.}
Let $M_D$ be the moduli space of stable rank two and degree one  Higgs bundles $(E,\phi)$ on $X$, 
where $\phi$ is a one form with coefficients in $\text{End}(E).$ This is a nonsingular quasi projective variety
of the same dimension $2a$. It is quasi projective, neither affine, nor quasi projective: it carries the 
projective Hitchin map $h: M_D \to A\cong\comp^{a}$, $(E,\phi) \mapsto (\mbox{trace}(\phi), \mbox{det}(\phi))$
(sections of $T^*X$ and of its tensor square), and the general fiber is an abelian variety
of dimension $a$. 

\textbf{Non abelian Hodge theorem.}
Part of the  non abelian Hodge theorem states that these  two varieties are naturally diffeomorphic. They are not
biholomorphic. 
There is also a third moduli space in the picture, related to flat connections; but we stick to our limited set-up above.

\textbf{Mixed Hodge structures on $H^*(M_D)$ and $H^*(M_B)$.}
The mixed Hodge theory of both sides is relatively well-understood: the one for $M_D$ is pure (Exercise~\ref{mdpure}),
 so that the weight filtration 
$W_D$ on $H^*(M_D,\rat)$   is  the filtration by cohomological degree: $0= W_{D,d-1} H^d \subseteq
W_{D,d}H^d=H^d$, or $Gr^{W_D}_d H^d=H^d$. The mixed Hodge structure
 on $H^*(M_B)$ is  more interesting: the odd graded pieces $Gr^{W_B}_{\text{odd}} H^d=0$, while 
 the even ones $Gr^{W_B}_{2b}H^d$ are of pure type $(b,b)$ (Hodge-Tate) and the non-purity, here,
 refers to the fact that we do have degrees $d$ for which  $H^d=\oplus_{b}Gr^{W_B}_{2b} H^d$ with 
 more than one non trivial summand; the non trivial graded pieces live
in the interval $[0, 4a]$.

\textbf{$W_B$ and $W_D$ do not match.}
By what above, it is clear that the weight filtrations $W_B$ (mixedness) and $W_D$ (purity)  do not correspond under the diffeomorphism $M_B \cong M_D.$

\textbf{The curious hard Lefschetz phenomenon on $H^*(M_B, \rat)$.}
There is a distinguished cohomology class $\alpha\in H^2(M,\rat)$ which is linked to  a curious  phenomenon concerning the $M_B$-side, i.e. 
there is a sort of hard Lefschetz statement of the form: 
\beq\la{chl}
\alpha^b \cup -: Gr_{2a- 2b}^{{W_B}} H^*({M_B}) \stackrel{\cong}\lorw 
Gr_{2a +2b}^{{W_B}}     H^{*+2b}({M_B}) .
\eeq
 It is called the curious hard Lefschetz because it looks like a hard Lefschetz-kind of statement, but   remember that  $M_B$ is affine
and that  $\alpha_B$ is  a  $(2,2)$ class!

\begin{??}\la{gt5}
Via the non abelian Hodge theorem isomorphism \[H^*(M_B,\rat) \cong H^*(M_D,\rat), \]what corresponds
 to $W_B$ together with its curious hard Lefschetz, on the $H^*(M_D,\rat)$-side?
\end{??}

To answer this question,
 we first normalize the perverse Leray filtration $\ms{P}$  on $H^*(M_D,\rat)$ for the Hitchin map
 so that its graded pieces are trivial outside the interval
$[0, 2a]$. We denote the result by $\ms{P}_D.$ This means that instead of working with $Rf_* \rat_{M_D}$, we work with $Rf_*\rat_{M_D}[a]$ (Exercise~\ref{normperv}).  

Recall that the analogous interval for the weight filtration $W_B$ is $[0,4a]$ and that the
odd graded pieces are trivial. This makes the following answer to Question~\ref{gt5} ``numerically'' plausible:
intervals match by halving, in view of $[0,2a]$ and $[0,4a]$ and $Gr^{W_B}_{\text{odd}}=0$.
On the $CHL/RHL$-side, the answer is also made plausible by the fact that the class $\alpha$ is
ample on the fibers of the Hitchin map, so that indeed it gives rise to a RHL.

\begin{tm}\la{p=wtm} {\rm (\textbf{P=W} \ci{p=w})}
For $G=GL/SL/PGL(2,\comp)$, via the non abelian Hodge theorem  $M_B\cong M_D$, we have:
\[
W_{B,2b} \longleftrightarrow  \ms{P}_{D,b} \quad \forall b, \qquad \mbox{CHL} \longleftrightarrow  \mbox{RHL}.
\]
\end{tm}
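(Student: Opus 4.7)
The plan is to leverage the ring structure of $H^*(M_D, \rat) \cong H^*(M_B, \rat)$ together with multiplicativity of both the weight filtration $W_B$ and the normalized perverse Leray filtration $\ms{P}_D$, reducing the identification $W_{B,2b} \leftrightarrow \ms{P}_{D,b}$ to a verification on a ring-generating set. For $W_B$, multiplicativity is automatic from Deligne's theory of mixed Hodge structures. For $\ms{P}_D$, it follows from the geometric description in Theorem~\ref{tmgdpf}: the perverse filtration is realized as kernels of restrictions to preimages of a sufficiently general affine flag on the Hitchin base, and one checks that cup product preserves these kernels once the flag is chosen compatibly.

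Next I would use the fact, special to rank two, that $H^*(M_B, \rat) = H^*(M_D, \rat)$ is generated by tautological classes built from the universal bundle: in the $GL_2$ case, by a class $\alpha$ in degree $2$, a class $\beta$ in degree $4$, and classes $\psi_i$ in degree $3$ for $i = 1, \ldots, 2g$. Their weights in $W_B$ can be read off from the explicit mixed Hodge numbers of $M_B$ (e.g.\ from the work of Hausel--Rodriguez-Villegas): each of $\alpha, \beta, \psi_i$ sits in the appropriate $W_B$ piece equal to twice the value one needs on the $\ms{P}_D$ side. Dually, on the $M_D$ side, applying Theorem~\ref{tmgdpf} with a flag pulled back from a general flag on the Hitchin base $A$, one aims to show that each generator lies precisely in the predicted $\ms{P}_{D,b}$ piece and not lower.

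The main obstacle, I expect, lies here: while $\alpha$'s perversity is controlled by its relation to an $h$-ample class, pinning down the perversity of $\psi_i$ and $\beta$ requires a concrete analysis of how these tautological classes restrict to preimages of the flag, using the explicit geometry of the Hitchin map. A natural strategy is to realize $\psi_i$ and $\beta$ via slant products of $\alpha$-like universal classes against a homology basis of the curve $X$, and then use compatibility of the slant construction with the perverse filtration to propagate the bound known for the degree-two generator; this is where the rank-two simplification is essential, as higher rank produces additional generators and combinatorial complications that make the direct approach fail.

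Once the two filtrations agree on generators, multiplicativity promotes the equality to all of cohomology. The CHL $\leftrightarrow$ RHL correspondence then follows: the class $\alpha$ on $M_B$ corresponds, up to a scalar, to the first Chern class of a line bundle on $M_D$ whose restriction to the Hitchin fibers is ample, so cup product with $\alpha^b$ on the $M_B$ side matches cup product with $\eta^b$ on the $M_D$ side. Under the identification of graded pieces, the curious hard Lefschetz isomorphism~(\ref{chl}) translates into the relative hard Lefschetz statement of Theorem~\ref{rhl} for $h$, expressed on cohomology via the splitting~(\ref{ghjuy}).
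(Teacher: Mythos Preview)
Your overall architecture matches the paper's: reduce $W_{B,2b}\leftrightarrow \ms{P}_{D,b}$ to (i) a check on ring generators and (ii) multiplicativity of both filtrations, then read off CHL $\leftrightarrow$ RHL. But you have inverted where the difficulty lies, and the step you treat as routine is in fact the heart of the argument.

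You write that multiplicativity of $\ms{P}_D$ ``follows from the geometric description in Theorem~\ref{tmgdpf}: \ldots one checks that cup product preserves these kernels once the flag is chosen compatibly.'' This does not work. After the normalization (Exercise~\ref{normperv}), a class in $\ms{P}_{D,b}H^d(M_D,\rat)$ is characterized by vanishing on the preimage $X_{b+a-d+1}$ of a general codimension-$(b+a-d+1)$ linear section of $A$. For the generator $\beta\in H^4$ one has $b=2<d=4$, so $\beta$ is only known to die on $X_{a-1}$. To place $\beta^2\in H^8$ in $\ms{P}_{D,4}$ you would need it to die on $X_{a-3}$, which is \emph{larger} than $X_{a-1}$; the restriction of $\beta$ there need not vanish, and the naive ``product of kernels lands in the kernel'' argument collapses. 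The same obstruction arises for any monomial involving generators whose perversity is strictly below their cohomological degree. This is not a technicality: Exercise~\ref{pfnmu} exhibits a proper map for which the perverse Leray filtration is genuinely non-multiplicative, so no general-nonsense argument from the flag description can suffice. The paper says explicitly that establishing multiplicativity of $\ms{P}_D$ for the Hitchin map is ``the heart of the proof'' in \ci{p=w}, and it is proved there by methods specific to that map.

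Conversely, the paper regards the verification on generators as ``not hard'' in rank two (the sample computation for $\beta$ in \S\ref{p=w} illustrates the method, using the flag description together with Ng\^{o}'s support theorem over $A^{ell}$), whereas you flag it as the main obstacle. So: keep your outline, but relocate the serious work to proving multiplicativity of $\ms{P}_D$, and drop the claim that it is a consequence of Theorem~\ref{tmgdpf}.
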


Of course, even if numerically plausible, the fact that the subspaces of the filtrations match and that
CHL turns into RHL seems striking to some of us.

The proof of Theorem~\ref{p=wtm} makes  an essential use of the geometric description  of the perverse filtration for the Hitchin map
based on Theorem~\ref{tmgdpf}: generators and relations  for the cohomology ring $H^*(M_B,\rat)$ 
are known (Hausel-Thaddeus, Hausel-Rodriguez Villegas); the generators are of pure type
$(p,p)$ (for various values of $p$) hence live in $W_{B,2p}$;   every cohomology class is a sum of  monomials in these generators;
such monomials have type which is the sum of the types of the factors; their level in $W_B$ is the sum
of the levels of the factors; the proof then  hinges on the verification that all monomials of level $2b$ in $W_B$
live in $\ms{P}_{D,b}$; in turn this follows by verifying that the generators have this property and, critically,  that
$\ms{P}_D$ is multiplicative (i.e. the level of a cup product is not more than the sum of the levels of the factors).

In the $GL_2/SL_2/PGL_2$-case, it is not hard to verify that the generators have the required property.
 The heart of the proof of the P=W Theorem~\ref{p=wtm}  in \ci{p=w} consists of showing that
the perverse Leray filtration for the Hitchin map is multiplicative with respect to the cup product.
This is automatic for the Leray filtration of any map, but fails in general for the perverse Leray filtration
(Exercise~\ref{pfnmu}). In our case,  the Leray filtration differs from the perverse Leray filtration.

Let us illustrate the use of Theorem~\ref{tmgdpf}  with a calculation whose result tells us that a certain generator, let us call it $\beta
\in H^4(M,\rat),$ 
of type $(2,2)$ in $W_{B,4} H^4(M_B,\rat)$, in fact  lies in $\ms{P}_{D,2} H^4(M_D,\rat)$. See \ci{p=w}, \S3.1.

By keeping in mind the  normalization  above of the perverse filtration,
 the geometric description of the perverse filtration  Theorem~\ref{tmgdpf} requires us to verify that
 the class $\beta$ vanishes over the pre-image of a generic affine line  in $A\cong \comp^a$ (end of Exercise~\ref{normperv}).
 
 The class $\beta \in H^4(M_D,\rat)$ is known to be  a multiple of  the second Chern class of the tangent bundle of $M_D.$
 Since the generic fiber is an abelian variety, it is clear that $\beta$ vanishes over the pre-image of a generic point.
 More is true: every linear function on $A$ gives rise to a Hamiltonian vector field tangent to the fibers of the Hitchin map;
 since the tangent bundle of $M_{D,reg}$ (pre-image of regular values  $A_{reg}$ of $h$) is an extension
 of the pull-back of the (trivial) tangent bundle of $A_{reg}$ by the relative tangent bundle (also trivialized by the Hamiltonian vector
 fields above), we have that in fact $\beta$ is trivial on $M_{D,reg}$.

 Ng\^{o}'s striking  support theorem \ci{ngo}  tells us that there is a Zariski dense  open set $A^{ell}\subseteq A$ with closed complement
 of codimension $>g-2$ such that
 the decomposition theorem over $A^{ell}$ is of the form $Rh^{ell}_* \rat \cong \oplus_{q\geq 0} \m{IC}_{A^{ell}}(R^q)[-q]$
 (in \ci{p=w}, we reach this conclusion directly  and complement it by showing that
 these  intersection complexes are in fact sheaves on $A^{ell}$).
 A generic line will avoid the small closed complement (at least if $g\geq 3$; $g=2$ can be dealt with separately),
 where $R^q$ are the locally constant direct image sheaves over the regular part.

 Pick a generic line $\Lambda$ and observe that the decomposition theorem for the Hitchin map restricted over the line
 reads:
 \[Rh^{\Lambda}_* \rat \cong \bigoplus_{q\geq 0} \m{IC}_{\Lambda}(R^q)[-q];\]this is because
 the restriction of an (un-shifted) intersection complex $\m{IC}$ to a general linear section
 is an (un-shifted)  intersection complex.
 
 Let $j:\Lambda_{reg}\to \Lambda$ be the open immersion of the set of regular values of $h^{\Lambda}.$
 Since $\Lambda$ is a nonsingular curve, we know that 
 \[Rh^{\Lambda}_* \rat \cong \bigoplus_{q\geq 0} j_*R^q[-q];\]
  this is because intersection complexes
 on nonsingular curves
 are  obtained via the ordinary sheaf-theoretic push-forward (Fact~\ref{509}). 
 
 Note that our perverse sheaves are now just sheaves (up to shift).
 It follows that  the perverse spectral sequence for $h^\Lambda$ is just the ordinary Leray spectral sequence and the same holds
 for  map $h^{\Lambda_{reg}}$ over the set of regular values of $h^\Lambda$.

 By the functoriality of the Leray spectral sequence and by Artin vanishing on the
 affine curves $\Lambda$ and $\Lambda_{reg}$ ($H^{>1}=0$!), we have a commutative diagram  
 \beq\la{gt54}
 \begin{gathered} \xymatrix{
 0 \ar[r]   & H^1(\Lambda, j_* R^3) \ar[r] \ar[d]  &   H^4 (M_\Lambda) \ar[r]  \ar[d] &   H^0 (A, j_* R^4) \ar[r] \ar[d]^= &  0\\
 0 \ar[r] & H^1(\Lambda_{reg}, R^3) \ar[r] &  H^4(M_{\Lambda_{reg}}) \ar[r] & H^0 (R^4) \ar[r] & 0,
 }
  \end{gathered}\eeq
of short
 exact sequences (the edge sequences for the Leray spectral sequences for the maps $h$)
 where the first vertical map is injective (edge sequence for the Leray spectral sequence for the map $j$), and the third is an
 isomorphism (definition of direct image sheaf).
 
 A simple diagram chase, tells us that the vertical restriction map in the middle of (\ref{gt54})  is also injective.
 
 On the other hand, the class $\beta_{|M_\Lambda} \mapsto \beta_{|M_{\Lambda_{reg}}}=0$ by what seen earlier
 ($\beta$~restricts to zero over $A_{reg}$, hence over $\Lambda_{reg}$).
 
 By the injectivity statement  above, we see that $\beta$ vanishes over the generic line
 and  we deduce that $\beta \in \ms{P}_{D,2}H^4(M_D,\rat)$, as predicted by $P=W$.

\begin{??}\la{alln}
We can formulate $P=W$ for every complex reductive group. Does it hold, at least for $GL_n$? 
\end{??}

There are indications that this should be ok for $GL_{n}$, $n$ small.

\subsection{Let us conclude with a motivic  question}\la{rty123}

Let $f: X \to Y$ be a projective map of projective varieties with $X$ nonsingular.
By the decomposition decomposition theorem, there is an isomorphism
\beq\la{vfgtr}
\phi: H^*(X,\rat) \cong \oplus_{q,\m{EV}_q} IH^{*-q}(S,L).
\eeq
This implies that for each $(S,L)$ in $\m{EV}_q$ we obtain a projector (map that squares to itself)
on $H^*(X, \rat)$ with image $\phi (IH^{*-q}(S,L))$.  We view this projector
as a cohomology class $\pi_\phi:= H^{2\dim{X}}(X\times X, \rat)$.

It is possible to endow each term on the r.h.s.  of (\ref{vfgtr}) with a natural pure Hodge structure and then to choose
an isomorphism $\phi$ (\ref{vfgtr}) that is an isomorphism of pure Hodge structures. This implies
that $\pi_{\phi}$ is rational and that is has {$(p,q)$-type} $(\dim{X}, \dim{X})$, i.e. it is a Hodge class.

According to the Hodge conjecture, $\pi_\phi$ should be  algebraic (cohomology class of an algebraic cycle in $X\times X$).

\begin{??}\la{modt} {\rm (\textbf{Motivic decomposition theorem})}
Can we chose $\phi$ so that the resulting projectors $\pi_\phi$ are given by algebraic cycles?
\end{??}

The answer is positive for semismall maps (Exercise~\ref{iths}).
We have no idea if/why this should be true. We can prove something much weaker: the projectors
are absolute Hodge (in the sense of Deligne), even  motivated (in the sense of Andr\'e); see \ci{motivated}.

If it were true, then, by  applying this to the blowing up of the projective cone
over an embedded projective manifold, it would imply the Grothendieck standard conjecture of Lefschetz type
(the  inverse to the Hard Lefschetz isomorphisms are induced by algebraic cycles in the product); see the introduction to 
\ci{co-ha}.

\subsection{Exercises for Lecture 5}\la{exlz4}

\begin{exe}\la{ght} {\rm (\textbf{Restricting  perverse sheaves to general linear sections})} {\rm  Let $Y$ be quasi  projective and $P\in P(Y)$. Show
that if $i: Y_k \to Y$ is a codimension~$k$ general linear section of $Y$ relative to any fixed  embedding in
$Y \to \pn{N}$, then $P_{|Y_k}[-k]   \in P(Y_k).$ 
To do so first verify directly that  $P_{|Y_k}[-k]$ satisfies the conditions of support (use the Bertini theorem
to cut down the supports of cohomology sheaves). It not so trivial to verify the conditions of co-support, i.e. the conditions of 
 support for  the dual of $P_{|Y_k}[-k]$. Here, we simply say that we can choose $Y_k$ general, depending on $P$, so that $i^! = i^*[-2k]$
 and that  the desired conditions follow formally from this and from the duality exchange property: $(i^* (P[-k]))^\vee = i^! P^\vee [k] = i^* P^\vee [-k].$ (See \ci{decmightam}, for example).
}
\end{exe}

\begin{exe}\la{ght2} {\rm (\textbf{Restriction to general linear sections maps in cohomology})} {\rm Let $Y$ be quasi projective,  let  $P \in D(Y)$ and let $Y_k$ be the complete intersection of $k$ general linear  sections of $Y$ relative to any embedding
of $Y$ in some projective  space. Use the Lefschetz hyperplane theorem to show that
the restriction maps  $H^*(Y,P) \to H^*(Y_k, P_{|Y_k})$ are  injective
for every $* \leq -k.$ Assume in addition that $Y$ is affine. Use  the Artin vanishing theorem
for perverse sheaves  and Exercise~\ref{ght}  to show that the same restriction maps
are zero for $* >-k.$
}
\end{exe}

\begin{exe}\la{mui} {\rm (\textbf{Geometric description of perverse Leray})} {\rm 
Write out  explicitly the conclusion of Theorem~\ref{tmgdpf} in the case when $f$ is the blow-up
at the vertex 
of the affine cone  over $\pn{1}\times \pn{1}$ and verify it.}
\end{exe}

\begin{exe}\la{joau} 
{\rm 
(\textbf{Jouanolou's  trick}) Let $Y$ be quasi projective. There is a map $p: \m{Y} \to Y$ with $\m{Y}$ affine and that is a Zariski locally trivial
bundle with fiber affine spaces $\bb{A}^m$. There are various different ways to do this; following is a series of hints to reach this  goal.
 Choose a projective completion $Y\subseteq \ov{Y}$ so that the embedding is affine (e.g. with boundary a divisor).
 Argue that we may assume that $Y$ is projective. Pick a  suitably  positive rank $\dim{Y}+1$ vector bundle $E$ on $Y$,
 where suitably positive :=  $\m{O}_{\bb{P}(E)}(1)$ is very ample and has a section $s$ that is not identically zero on any projective
 fiber.  Show that taking $\pn{1}\times \pn{1} \setminus \Delta \to \pn{1}$ yields a special case of the construction above.
 Show that ${\m{Y}:= \bb{P}(E) \setminus (s=0)}$ does the job.  The usefulness of this trick in our situation is that
 if we start with ${f:X \to Y}$, with $Y$ quasi projective, we can base change to ${g: \m{X} \to \m{Y}}$
 so that now the target is affine 
 and the properties of $p$ (smooth map with ``contractible'' fibers) allow us   to prove the assertions
on $f$ by first proving them for $g$ and then ``descending'' them to~$f$.
}
\end{exe}

\begin{exe}\la{comp} {\rm 
(\textbf{Leray and perverse Leray})  In this exercise use the following: if  $j: S^o \to S$ is an open embedding of nonsingular curves and $L$ is a locally constant sheaf on $S^o,$ then $\m{IC}_S(L)= R^0j_*L.$  Let $p: \pn{1} \times \pn{1} \to \pn{1}$ be a projection and let ${b: X \to \pn{1}\times \pn{1}}$ be the blowing up at a point. Let $f:= p\circ b$. Determine and compare the perverse Leray  and the Leray filtrations for $f$ on $H^*(X,\rat)$. (Renumber the
perverse Leray one so that $1 \in \ms{P}_0 \setminus \ms{P}_{-1}$; this way they both ``start'' at the same ``time''.) 
Do the same thing, but for a Lefschetz pencil of plane curves. Note how the graded spaces for the two filtrations differ in the ``middle''.}
\end{exe}

\begin{exe}\la{mdpure}
{\rm 
(\textbf{$H^*(M_D)$ is pure}). There is a natural $\comp^*$-action on $M_D$ obtained by multiplying
the Higgs field $\phi$ by a scalar. Show that the Hitchin map is  equivariant for this action. Use the $\comp^*$-action and the properness
of the Hitchin map to show that the  closed embedding of the fiber $h^{-1}(0) \to M_D$ induces
an isomorphism in cohomology.  Use the weight inequalities listed at the end of our quick review of mixed Hodge theory
in \S\ref{exlz1}, to deduce the $H^j(M_D)$ is pure of weight $j$ for every~$j$.}
\end{exe}

\begin{exe}\la{normperv}
{\rm 
(\textbf{ Normalizing the perverse filtration}) Assume that 
\[C=\oplus_b P_b[-b] \in D(Y),   \quad    \mbox{with $P_b\in P(Y)$}. \]
Show that $\ms{P}_b H^*(Y,C[m]))= \ms{P}_{b+m} H^{*+m}(Y,C)$ (in fact, this is true in general, i.e. without
assuming that $C$ splits). Deduce that if $H^*(Y,C)\neq 0,$ then   $\exists !\, m \in \zed$ such that 
$Gr^{\ms{P}}_{b<0} H^*(Y,C[m])=0$ and $Gr^{\ms{P}}_{0} H^*(Y,C[m])\neq 0.$
The Hitchin map is of pure relative dimension $a$ with a nonsingular domain; use these two facts, and the decomposition theorem, to deduce that 
if $C= Rf_* \rat_{M_D}$, then the $m=a,$ i.e. the perverse Leray filtration
for $H^*(A, Rf_* \rat_{M_D} [a])$ ``starts'' at level zero, and that it ``ends'' at level $2a$ (trivial graded pieces after that).
Reality check: verify that a class lives in $\ms{P}_{D,2} H^4(M_D,\rat)$ iff it restricts to zero over a general line
$\Lambda^1 \subseteq A^a$.
}
\end{exe}

\begin{exe}\la{pfnmu}
{\rm 
(\textbf{The perverse filtration is not multiplicative in general}) 
First, let ${X'= S\times C}$ (surface times curve, both projective and nonsingular), then let $X$ be the blowing up
of a point in $X'$  and  let $f: X \to C$ be the natural map (blow-down followed by projection.
This is a flat map of relative dimension $2$. Let $\ms{P}$ be the perverse Leray filtration 
on $H^*(X,\rat) = H^{*-2}(C, Rf_* \rat_{X}[2])$. Verify that it lives in the interval $[0,4].$
Verify that the class $e$ of the exceptional divisor lies in $\ms{P}_{1} \setminus \ms{P}_0$
and that $e^2$ lives in $\ms{P}_{3}\setminus \ms{P}_2.$ Deduce that the perverse filtration is not multiplicative
in general (multiplicative:= $\ms{P}_i \cup \ms{P}_j \to \ms{P}_{i+j}$).
}
\end{exe}

\begin{exe}\la{motok}
{\rm 
(\textbf{When Question~\ref{modt} has an easy answer}) 
List some classes of proper maps $f:X \to Y$ such that Question~\ref{modt} has an affirmative answer.
}
\end{exe}

\textbf{Acknowledgments.} I would like to thank T. Feng, M. Goresky, L. Migliorini and R. Virk for their very useful comments.

%
%
%
%
%
%
%
\bibspread

\begin{bibdiv}
	\begin{biblist}

		\bibitem{bbd}{A.A. Beilinson, J.N. Bernstein, P. Deligne,
		{\em Faisceaux pervers}, Ast\'erisque \textbf{100}, Pa\-ri\-s, Soc. Math. 
		Fr. 1982.}

		\bibitem{beili} {A.A. Beilinson, ``On the derived category of perverse sheaves,'' 
		 {\em  $K$-theory, arithmetic and geometry (Moscow, 1984--1986)},  
		pp.27--41, Lecture Notes in Math., 1289, Springer 1987. }

		\bibitem{bl} {J. Bernstein, V. Lunts, {\em Equivariant sheaves and functors,}  
		Lecture Notes in Mathematics, \textbf{1578}. Springer-Verlag, Berlin, 1994. }                                                                                

		\bibitem{borel} {A. Borel et al., {\em Intersection Cohomology}, 
		Progress in Mathematics Vol. \textbf{50}, Birkh\"auser, Boston Basel 
		Stuttgart 1984.}

		\bibitem{chrissginzburg} {N. Chriss, V. Ginzburg, {\em Representations
		    theory and Complex Geometry,} 
		Birkh\"auser, Boston, 1997.}

		\bibitem{clem} {C.H. Clemens, ``Degeneration of K\"ahler manifolds,''
		Duke Math.J. \textbf{44} (1977), 215-290.}

		\bibitem{co-ha} {A. Corti, M. Hanamura, ``Motivic decompositions and intersection Chow motives, I," Duke Math.J., Vol. 103 (2000), No. 3, p.459-522}
		
		\bibitem{decbook}
		{M.A. de Cataldo,
		{\em The Hodge Theory of projective manifolds,} 
		Imperial College Press, London,  2007.}

		\bibitem{deabday}
		{M.A. de Cataldo,
		``The standard filtration on cohomology with compact supports
		with an appendix on the base change map and the Lefschetz hyperplane theorem,''
		Contemporary Mathematics, 496 (2009), Interactions of Classical and Numerical Algebraic Geometry, Bates et al. Eds., 199-220.}

		\bibitem{decII}
		{M.A. de Cataldo,
		``The perverse filtration and the Lefschetz Hyperplane Theorem, II,''
		J. Algebraic Geometry 21 (2012) 305-345.}

		\bibitem{split}{M.A. de Cataldo, 
		 ``Hodge-theoretic splitting mechanisms for projective maps (in appendix, a letter from P. Deligne),''
		J. Singul. 7 (2013), 134-156.}

		\bibitem{herdlef}{ M.A. de Cataldo, L. Migliorini, ``The 
		Hard Lefschetz 
		Theorem and the topology of semismall maps,''
		Ann.Scient.Ec.Norm.Su., $4^e$ s\'erie, t.35, 2002, 759-772.}

		\bibitem{decmigsemi} {M. de Cataldo, L. Migliorini, ``The Chow motive of semismall resolutions,'' 
		Math.Res.Lett.  \textbf{11} (2004), 151-170.}

		\bibitem{decmightam}{M.A. de Cataldo, L. Migliorini, ``The 
		Hodge Theory of Algebraic maps,''
		Ann. Scient. \'Ec. Norm. Sup., $4^e$ s\'erie, t. \textbf{38}, (2005), 693-750.}


		\bibitem{decmigleiden}
		{M. de Cataldo, L. Migliorini,  ``Intersection forms, algebraic maps and 
		motivic decomposition for resolution of threefolds,'' 
		in {\em Algebraic Cycles and Motives,} London Math.Soc. Lecture
		Note Series, n.343, vol.1, pp.102-137, Cambridge University Press, Cambridge, UK, 
		2007.}

		\bibitem{bams} {M. de Cataldo, L. Migliorini,
		``The Decomposition Theorem and the topology of algebraic maps,''
		Bulletin of the A.M.S., Vol. 46, n.4, (2009), 535-633.}

		\bibitem{bamsv1} M. de Cataldo, L. Migliorini,
		``The Decomposition Theorem and the topology of algebraic maps,''
		``version one'', arXiv:0712.0349v1.

		\bibitem{htadt}
		{M. de Cataldo, L. Migliorini,
		``Hodge-theoretic aspects of the Decomposition Theorem ,''
		Algebraic Geometry, Seattle 2005, Proceedings of Symposia in Pure Mathematics, Vol. 80.2, 2009.}

		\bibitem{decmigso3}
		{M. de Cataldo, L. Migliorini,  ``The perverse filtration and the Lefschetz
		Hyperplane Section Theorem,'' Annals of Mathematics, Vol. 171, No. 3, 2010, 2089-2113.}

		\bibitem{p=w} 
		 {M. de Cataldo, T. Hausel, L. Migliorini,  ``Topology of Hitchin systems and Hodge theory of character varieties: the case A1,''
		 Annals of Mathematics 175 (2012), 1329-1407.}

		\bibitem{trieste}{M.A. de Cataldo, L. Migliorini,  ``The Hodge theory of maps. Lectures 4-5 by de Cataldo." Math. Notes, 49, Hodge theory, 273-296, Princeton Univ. Press, Princeton, NJ, 2014.}

		\bibitem{motivated} {M.A. de Cataldo, L. Migliorini, ``The projectors of the decomposition theorem
		are motivated," to appear in M.R.L.}

		\bibitem{dess} {P. Deligne, ``Th\'eor\`eme de Lefschetz et crit\`eres 
		de d\'eg\'en\'erescence
		de suites spectrales,'' Publ.Math. IHES \textbf{35} (1969), 107-126.}

		 \bibitem{ho1} {P. Deligne,
		 ``Th\'eorie de Hodge. I,''   Actes du Congr\`es International des Math\'ematiciens (Nice, 1970), Tome 1, pp. 425-430. Gauthier-Villars, Paris, 1971.}

		\bibitem{ho2}  {P. Deligne, ``Th\'eorie de Hodge, II,'' Publ.Math. 
		IHES \textbf{40} (1971), 5-57.}

		\bibitem{ho3}  {P. Deligne, ``Th\'eorie de Hodge, III,'' Publ.Math. 
		IHES \textbf{44} (1974), 5-78.}

		\bibitem{poids} {P. Deligne,
		``Poids dans la cohomologie des vari\'et\'es alg?briques,'' Proceedings of the International Congress of Mathematicians (Vancouver, B. C., 1974), Vol. 1, pp. 79-85. Canad. Math. Congress, Montreal, Que., 1975.}

		\bibitem{weil2} {P. Deligne, ``La conjecture de Weil, II,'' Publ.Math. 
		IHES \textbf{52} (1980), 138-252.}

		\bibitem{shockwave} {P. Deligne, ``D\'ecompositions dans la cat\'egorie 
		D\'eriv\'ee'', 
		Motives (Seattle, WA, 1991), 115--128, Proc. Sympos. Pure Math., \textbf{55}, Part 1, Amer. Math. Soc., Providence, RI, 1994.}

		\bibitem{durfee} {A.H. Durfee, ``A naive guide to mixed Hodge theory,'' {\em Singularities,} 
		Part 1 (Arcata, Calif., 1981),  313--320, Proc. Sympos. Pure Math., 40, Amer. Math. Soc., Providence, RI, 1983.}

		\bibitem{gel-man} {S. Gelfand, Y.I. Manin, {\em Methods of homological algebra,} 
		Second edition. Springer Monographs in Mathematics. Springer-Verlag, Berlin, 2003. }



		\bibitem{gomacsmt} {M. Goresky, R. MacPherson, {\em Stratified Morse 
		Theory}, Ergebnisse der Mathematik, und ihrer Grenzgebiete
		3.folge. Band 2, Springer-Verlag, Berlin Heidelberg 1988.}

		\bibitem{kaledin} {D. Kaledin, ``Symplectic singularities from the Poisson point of view'', J. Reine Angew. Math. 600 (2006), 135-156.}

		\bibitem{k-s} {M. Kashiwara, P. Schapira {\em Sheaves on manifolds},
		Grundlehren der mathematischen Wissenschaften. Vol. 292, 
		Springer-Verlag, Berlin Heidelberg 1990.}

		\bibitem{iv} {B. Iversen, {\em Cohomology of Sheaves}, Universitext, 
		Springer-Verlag, Berlin Heidelberg 1986.}

		\bibitem{kle} {S.L. Kleiman ``The development of Intersection Homology Theory,''
		Pure and Appl. Math. Quart. \textbf{3} no. 1 (2007) Special issue in
		honor of Robert MacPherson, 225-282.}

		\bibitem{macicm} {R. MacPherson,  ``Global questions in the topology of singular spaces,''  
		Proceedings of the International Congress of Mathematicians, Vol. 1, 2
		(Warsaw, 1983),  213-235, PWN, Warsaw, 1984.}



		\bibitem{mochizuki}{T. Mochizuki, 
		``Asymptotic behaviour of tame harmonic bundles and an application to pure twistor D-modules, I, II,
		Mem. Amer. Math. Soc. \textbf{185} (2007), no. 869, 870.}

		\bibitem{nakahilb} {H. Nakajima, 
		{\em Lectures on Hilbert schemes of points on surfaces}, University Lecture Series, 18. American Mathematical Society, Providence, RI, 1999. xii+132 pp.}

		\bibitem{ngo} {B.C. Ng\^{o}, 
		``Le lemme fondamental pour les alg?bres de Lie,'' Publ. Math. Inst. Hautes ?tudes Sci. No. 111 (2010), 1-169. }

		\bibitem{nori}{M. Nori, ``Constructible sheaves,'' Algebra, arithmetic and geometry, Part I, II (Mumbai, 2000), 471?491, Tata Inst. Fund. Res. Stud. Math., 16, Tata Inst. Fund. Res., Bombay, 2002. }

		\bibitem{sabbah}{C. Sabbah, ``Polarizable twistor $D$-modules, 
		Ast\'erisque No. \textbf{300} (2005).}

		\bibitem{samhm}
		{M. Saito,
		``Mixed Hodge modules,''
		 Publ. Res. Inst. Math. Sci. \textbf{26} (1990), no.~2, 221--333.}

		 \bibitem{stanley} {R. Stanley,
		``Combinatorial applications of the hard Lefschetz theorem,'' Proceedings of the International Congress of Mathematicians, Vol. 1, 2 (Warsaw, 1983), 447-453, PWN, Warsaw, 1984.}


	\end{biblist}
\end{bibdiv}

\end{document}